\newtheorem{theorem}{Theorem}[section]
\newtheorem{lemma}[theorem]{Lemma}
\theoremstyle{remark}
\newtheorem*{remark}{Remark}
\def\@eqnnum{{\normalsize \normalcolor\rm (\theequation)}}
\newcommand{\al}{\alpha}
\newcommand{\be}{\begin{equation}}
\newcommand{\ee}{\end{equation}}
\newcommand{\la}{\lambda}
\newcommand{\bt}{\beta}
\newcommand{\ga}{\gamma}
\newcommand{\sig}{\sigma}
\newcommand{\bea}{\begin{eqnarray}}
\newcommand{\eea}{\end{eqnarray}}
\renewcommand{\P}{\textsf{p}}
\numberwithin{equation}{section}
\journal{Journal of Approximation Theory, \href{http://dx.doi.org/10.1016/j.jat.2015.05.002}{{198} ({2015}) {63--110}}.}
\begin{document}

\begin{frontmatter}
\title{Asymptotics of determinants of Hankel matrices via non-linear difference equations}

\author[ELB]{Estelle L. Basor}
\ead{ebasor@aimath.org}
\address[ELB]{American Institute of Mathematics,
360 Portage Avenue, Palo Alto, CA 94306-2244, USA}
\author[YC]{Yang Chen}
\ead{yayangchen@umac.mo}
\address[YC]{ Faculty of Science and Technology, Department of Mathematics, University of Macau, Av. Padre Tom\'as Pereira, Taipa Macau, China}
\author[NSH]{Nazmus S. Haq\corref{cor1}}
\ead{nazmus.haq04@imperial.ac.uk}
\address[NSH]{Department of Mathematics, Imperial College London,
180 Queen's Gate, London SW7 2BZ, UK}
\cortext[cor1]{Corresponding author}

%\date{\today}
% The correct dates will be entered by the editor

%\maketitle

\begin{abstract}
E. Heine in the 19th century studied a system of orthogonal polynomials associated with the weight $\left[x(x-\al)(x-\bt)\right]^{-\frac{1}{2}}$, $x\in[0,\al]$, $0<\al<\bt$. A related system
was studied by C. J. Rees in 1945, associated with the weight $\left[(1-x^2)(1-k^2x^2)\right]^{-\frac{1}{2}}$, $x\in[-1,1]$, $k^2\in(0,1)$. These are also known as
elliptic orthogonal polynomials, since the moments of the weights maybe expressed in terms of elliptic integrals.
Such orthogonal polynomials are of great interest because the corresponding Hankel determinant, depending on
a parameter $k^2$, where $0<k^2<1$ is the $\tau$~function of a particular Painlev\'e VI, the special cases of
which are related to enumerative problems arising from string theory. We show that the recurrence coefficients, denoted by
$\bt_n(k^2),\;n=1,2,\dots$; and $\P_1(n,k^2)$, the coefficients of
$x^{n-2}$ of the monic polynomials orthogonal with respect to a generalized version of the weight studied by Rees,
$$
(1-x^2)^{\al}(1-k^2x^2)^{\bt},\;\;x\in[-1,1],\;\alpha>-1,\;\bt\in \mathbb{R},
$$
satisfy second order non-linear difference equations. The large $n$ expansion based on the difference equations when
combined with known asymptotics  of the leading terms of the associated Hankel determinant yields a complete
asymptotic expansion of the Hankel determinant.  The Painlev\'e equation is also discussed as well as
the generalization of the linear second order differential equation found by Rees.
\end{abstract}

\begin{keyword}
Hankel determinants\sep elliptic orthogonal polynomials\sep asymptotic expansions\sep non-linear difference equations \sep Painlev\'e equations \sep Random Matrix theory
%% keywords here, in the form: keyword \sep keyword

%% MSC codes here, in the form: \MSC code \sep code
%% or \MSC[2008] code \sep code (2000 is the default)
\MSC[2010]15B52 \sep 33C47 \sep 34E05 \sep 42C05 \sep 47B35   
\end{keyword}

%\large
%\linespread{1.3}
%\normalsize

\end{frontmatter}

%%
%% Start line numbering here if you want
%%
% \linenumbers

%\tableofcontents
%\setcounter{page}{1}

\numberwithin{equation}{section} %Numbering

\section{Introduction}
The study of Hankel determinants has seen a flurry of activity in recent years in part due to connections with Random Matrix theory (RMT).
This is because Hankel determinants compute the most fundamental objects studied in RMT. For example, the determinants may represent the
partition function for a particular random matrix ensemble, or they might be related to the distribution of the largest eigenvalue or they
may represent the generating function for a random variable associated to the ensemble.

Another recent interesting application of Hankel determinants is to compute  certain Hilbert series that are used to count the
 number of gauge invariant quantities on moduli spaces and  to characterize moduli spaces of a wide range of supersymmetric gauge theories.
Many aspects of supersymmetric gauge theories can be analyzed exactly, providing a ``laboratory" for the dynamic of gauge theories.
 For additional information about this topic, see \cite{ChenJJM2013,BasorChenMekar2012}. In these papers, heavy use are made of the mathematics
 involving two important types of matrices: Toeplitz and Hankel.

Often there is an associated Painlev\'e equation
that is satisfied by the logarithm of the Hankel determinant with respect to some parameter. This is true, for example,
in the Gaussian Unitary ensemble and for many other classical cases \cite{TracyWidom1999}.
In a recent development, one finds that Painlev\'e equations also appear in the information theoretic aspect of
wireless communication systems \cite{ChenMckay2010}.
Once the Painlev\'e equation is found,
then the Hankel determinant is much better understood. Asymptotics can be found via the Painlev\'e equation, scalings can be made to find limiting
densities, and in general the universal nature of the distributions can be analyzed. Other methods, including Riemann-Hilbert techniques and
general Fredholm operator theory methods, have also been used very successfully to find these asymptotics along with the Painlev\'e
equation analysis.

In this paper, where our approach is different, our focus is on the modified Jacobi weight,
$$
(1-x^2)^{\al}(1-k^2x^2)^{\bt},\;\;x\in[-1,1],\;\alpha>-1,\;\bt\in \mathbb{R}, \;k^2\in(0,1).
$$
We find asymptotics for the determinant, but our main technique is to compute these asymptotics from difference equations and then combine the information obtained from the difference equation with known asymptotics for  the leading order terms.
This is done by finding equations for auxiliary quantities defined by the corresponding orthogonal polynomials. The main idea is to use the very useful and practical ladder operator approach developed in \cite{ChenIsmail1997} and \cite{ChenIsmail2005}.
Recent applications of Riemann-Hilbert techniques on the asymptotics of orthogonal polynomials and Hankel determinants associated with similar modified Jacobi weights can be found in \cite{XuZhao_Jacobi,ZengXuZhao_Jacobi}. 
%Hankel determinants are also fundamental in the study of orthogonal polynomials and the connection between the recursion coefficients
% for the polynomials and the determinants are well known \cite{Szego1939}. In this paper, we utilize this relationship to compute the
%  recurrence coefficients of a system of orthogonal polynomials, which generalizes  Through a change of variable, we also show that the
%  Hankel determinant is related to a Painlev\'e VI.Here is an outline of the paper. In the next section we
\subsection{Heine and Rees}
%It is a well known fact that any polynomial sequence $\{P_n(x)\}$ orthogonal with respect to a weight functions
%satisfies a three-term recurrence relation
%\bea
%xP_{n}(x)&=&P_{n+1}+\al_nP_n(x)+\bt_nP_{n-1}(x).
%\eea
In the 19th century, Heine \cite{Heine1878}, considered polynomials orthogonal with respect to the weight,
\bea\label{def:w_Heine}
w_{\rm H}(x)&=&\frac{1}{\sqrt {x(x-\al)(x-\bt)}},\qquad x\in[0,\al],\;\; 0<\al<\bt,
\eea
and derived a second order ode satisfied by them \cite[p. 295]{Heine1878}. This ode is a generalization
of the hypergeometric equation, but of course not in the conventional ``eigenvalue-eigenfunction" form.

Rees \cite{Rees1945}, in 1945, studied a similar problem, with weight,
\bea\label{def:w_Rees}
w_{\rm R}(x)&:=&\frac{1}{{\sqrt {(1-x^2)(1-k^2x^2)}}},\qquad x\in[-1,1],\;\; k^2\in(0,1),
\eea
and used a method due to  Shohat \cite{Shohat1939}, essentially a variation of that employed by Heine,
to derive a second order ode. % satisfied by the corresponding orthogonal polynomials.
The ode obtained by Heine \cite[p. 295]{Heine1878} reads,
%\begin{footnotesize}
\bea\label{eq:R:HeineDiffEqn}
\quad 2x(x-\al)(x-\bt)(x-\ga)P^{\;\prime\prime}_n(x)
+ \bigg[(x-\ga)\frac{d}{dx}\left(\frac{1}{[w_{\rm H}(x)]^2}\right)-\frac{2}{[w_{\rm H}(x)]^2}\bigg]P_n^{\;\prime}(x)
\nonumber\\
+\Big[a+bx-n(2n-1)x^2\Big]P_n(x)&=&0.\quad
\eea
%\end{footnotesize}\noindent
\noindent There are 3 parameters, $a,$ $b$ and $\gamma$  in Heine's differential equation (\ref{eq:R:HeineDiffEqn}).
Furthermore, $a$ and $b$  are expressed in terms of $\ga$ as roots of
2 algebraic equations, however $\ga$ is not characterized.
Therefore Heine's ode is to be regarded as an existence proof, and appeared not to be suitable for
the further study of such orthogonal polynomials.

For polynomials associated with $w_{\rm R}(x)$, Rees \cite[eq. 48]{Rees1945} derived the following second order ode:
\bea
\label{eq:R:ReesDiffEqn}
\quad \frac{M_n(x)}{w_{\rm R}(x)^2}P^{\;\prime\prime}_n(x)
+\bigg[\frac{M_n(x)}{2}\frac{d}{dx}\left(\frac{1}{w_{\rm R}(x)^2}\right)- \frac{M^{\prime}_n(x)}{w_{\rm R}(x)^2}\bigg]P^{\;\prime}_n(x)
%\nonumber\\
+\bigg[L_n(x)M^{\prime}_n(x)+M_n(x)U_n(x)\bigg]P_n(x) &=& 0,
\qquad
\eea
where\footnote{We identify $\bt_n$ and $U_n(x)$ with Rees' $\la_{n+1}$ and $D_n(x)$ respectively.}
\bea
M_n(x)&=& -(2n-1)k^2x^2-(2n+1)k^2(\bt_n+\bt_{n+1})+2n(1+k^2)-4k^2\;\sum_{j=1}^{n-1}\bt_j,\\
L_n(x)&=& nk^2x^3+\bigg[(2n-1)k^2\bt_n-n(1+k^2)+2k^2\;\sum_{j=1}^{n-1}\bt_j\bigg]x,\\
U_n(x)&=&-n(n+1)k^2x^2-2(2n-1)k^2\;\sum_{j=1}^{n}\bt_j+n^2(1+k^2).
\eea
%Rees' differential equation (\ref{eq:R:ReesDiffEqn}) does not have this problem,
%in order to study this equation further, it would be necessary to characterize the recurrence
%coefficient $\bt_n$  ($\al_n$ is zero for even weights).
%Since $w_{\bf R}(x)$ is even in $x,$
Rees found a difference equation \cite[eq. 55]{Rees1945} satisfied by $\bt_n$, and $\sum\limits_j\bt_j$ and so,
by specifying $\bt_0$, $\bt_1$ and $\bt_2$, it would be possible, at least in principle,
to determine all $\bt_n$ iteratively.
% and hence solve (\ref{eq:R:ReesDiffEqn}) to find $P_n(x)$.

In this paper, we study a generalization of Rees' problem. Our polynomials are orthogonal with respect to
the following weight:
\begin{equation}\label{defn:R:w(xk2)}
w(x,k^2)=(1-x^2)^{\alpha}(1-k^2x^2)^{\beta},\qquad x\in [-1,1],\;\;\al>-1,\;\;\bt\in\mathbb{R},\;\; k^2\in(0,1).
\end{equation}
This weight may be regarded as a deformation of the Jacobi weight $w^{(\al,\al)}(x)$, where
\bea\label{def:JacobiWeight}
w^{(\al,\bt)}(x)&=&(1-x)^{\al}(1+x)^{\bt}, \qquad x\in[-1,1], \;\; \al>-1,\;\; \bt>-1,
\eea
with the ``extra" multiplicative factor $(1-k^2x^2)^{\bt}$.

If $\al=-\frac{1}{2}$ and $\bt=-\frac{1}{2}$, then (\ref{defn:R:w(xk2)}) reduces to  Rees' weight function (\ref{def:w_Rees}).

Instead of following the method employed by Heine and by Rees, we use the theory
 of ladder operators for orthogonal polynomials
 \cite{TracyWidom1999,Magnus1995,BasorChenEhrhardt,ChenIsmail2005,ChenIts2009} and the associated supplementary
 conditions $(S_1)$, $(S_2)$ and $(S_2^\prime)$ (described later) to find a set of  difference equations.
 % We find a difference equation
 %satisfied by $\bt_n$, which reduces to Rees' equation
%for $\bt_n$ and $\P_1(n)$
%when $\al=-\frac{1}{2}$ and $\bt=-\frac{1}{2}$.
The Hankel determinant, generated by (\ref{defn:R:w(xk2)}), is defined as follows:
\bea\label{def:R:Hankel}
D_n[w(\cdot,k^2)] = \det\left( \mu_{i+j}(k^2)
\right)_{i,j=0}^{n-1},
\eea
where the moments are % (\ref{defn:R:w(xk2)})
\bea\label{eq:R:moments}
\mu_{j}(k^2) := \int\limits_{-1}^{1}\! x^{j}
w(x,k^2)\, dx, \qquad j = 0, 1, 2,\dots.
\eea
We state here for future reference facts on orthogonal polynomials and Hankel determinants. These can be found
in Szeg\"o's treatise \cite{Szego1939}.

For a given weight, say, $w(x)$, an even function, supported on $[-A, A]$, which has infinitely many moments
\bea
\mu_j[w]:=\int\limits_{-A}^{A}\!x^{j}w(x)\,dx,\qquad j=0,1,2,\dots,
\eea
it is a classical result that the Hankel determinant, $D_n[w]:=\det(\mu_{j+k})_{ j, k=0}^{n-1}$, admits
the following multiple integral representation:
$$
D_n[w]=\frac{1}{n!}\int\limits_{[-A,A]^n}\!\prod_{1\leq j<k\leq n}(x_k-x_j)^2\prod_{\ell=1}^{n}w(x_\ell)\,dx_\ell.
$$
The monic polynomials orthogonal with respect to $w$ over $[-A,A]$
\bea\label{def:Pn}
P_n(x)=x^n+\P_1(n)x^{n-2}+\dots+P_n(0),
\eea
satisfies the orthogonality relations
\bea\label{def:hn}
\int\limits_{-A}^{A}\!w(x)P_j(x)P_k(x)\,dx=h_j\;\delta_{j,k}, \qquad j,k=0,1,2,\dots,
\eea
where $h_j$ is the square of the $L^2$ norm over $[-A,A]$.
\\
$D_n$ admits a further representation
\bea\label{def:prodhn}
D_n[w]=\prod_{j=0}^{n-1}h_j.
\eea
From the orthogonality relations, there follows the three-term recurrence relation
\bea\label{def:recurrence}
xP_n(x)=P_{n+1}(x)+\bt_n\;P_{n-1}(x),\qquad n = 0,1,2,\dots,
\eea
subjected to the initial conditions
$$
P_0(x):=1,\qquad\bt_{0}:=0, \qquad P_{-1}(x):=0.
$$
Here
\bea\label{def:betan}
\bt_n=\frac{h_{n}}{h_{n-1}}.
\eea
From (\ref{def:recurrence}) and (\ref{def:Pn}), an easy computation gives
\bea\label{def:p1nbetan}
\P_1(n)-\P_1(n+1)&=&\bt_n,
\eea
where $\P_1(0)=\P_1(1):=0$,
and consequently,
\bea\label{def:p1n}
\P_1(n)=-\sum_{j=0}^{n-1}\bt_j.
\eea
The recurrence coefficient $\bt_n$, in terms of $D_n,$ reads,
\bea\label{def:dnbtn}
\bt_n=\frac{D_{n+1}D_{n-1}}{D_n^2}.
\eea
%It is well-known that if the weight is an even function, then $\al_n=0,\;\;n=0,1,2...$
For the sake to expedite the discussion in this paper, we have presented facts on orthogonal polynomials with even weight functions.

The main task of this paper is the computation of the large $n$ expansion for $D_n$, through two non-linear difference equations in $n$, one
second and the other third order, satisfied by $\bt_n$. We also find a second-order non-linear difference equation satisfied by $\P_1(n)$. These are derived through a systematic application of equations
$(S_1)$, $(S_2)$ and $(S_2^\prime)$ (to be presented later). The hard-to-come-by constants independent of $n$ in the leading term of the expansion and other relevant terms,
we shall obtain towards the end of this paper. They can actually be computed via three different methods, one from the equivalence of
our Hankel determinant
to the determinant of a Toeplitz$+$Hankel matrix, generated by a particular singular
weight, together with large $n$ asymptotics of the latter; and by two other more direct methods. A systematic large~$n$ expansion for
$D_n$ is then obtained by ``integrating" (\ref{def:dnbtn}).

It should be pointed out that the form of the asymptotic expansion for the Hankel determinant as well as for the coefficients in the monic orthogonal polynomials was given in \cite{KuijlaarsMcVaVan2004}. Thus for our situation, this guarantees that the expansion we give in Section \ref{Sec:R:LargeN_Exp} of this paper hold for $\beta_n$ and $\P_1(n)$. Our expansion for $D_n$ holds independently of \cite{KuijlaarsMcVaVan2004} since we rely on converting the determinant to one involving Toeplitz plus Hankel determinants where the results are known. Here we also have an explicitly determined constant.

We also find the analogue of the second order linear equation satisfied by the orthogonal polynomials found
 by Rees and show that the logarithmic derivative of our Hankel determinant is
 related to the $\sig$-form of a particular Painlev\'e VI differential equation. Similar results can be found in a
 straight-forward way for the Heine weight using the same technique, but we are not including them here.

 Here is an outline of the rest of the paper. In the next section we give a summary of results. In Section~\ref{Sec:R:LadOpAuxVar}, the ladder operator approach is used to find equations in the auxiliary variables.
 This leads directly to Sections \ref{Sec:R:DiffEqnProofs} and \ref{Sec:R:DiffEqn_btn_p1n} where the proofs of the difference equations are given and
 also the analogue of the derivation of the second-order ode. For the reader interested only in the results, these two sections can be omitted.

Section \ref{Sec:R:Special_Cases} is devoted to some special cases of the weight which reduce to the classical weight and this
section serves as a verification of the method. %Section 6 is the Toda evolution of the Hankel determinant.
The heart of the computation for the Hankel determinant is Section~\ref{Sec:R:LargeN_Exp}, where the difference equation satisfied by $\bt_n$ is used
to compute the correction terms of the large $n$ expansion and then this is tied to Section~\ref{Sec:R:Hankel_LargeN}, where the leading order terms
are computed from known results. We provide an alternative computation for the Hankel determinant in Section~\ref{Sec:R:Hankel_LargeN_Toda}, where we combine the large $n$ expansion of $\P_1(n)$ (also obtained in Section~\ref{Sec:R:LargeN_Exp}) with `time-evolution' equations satisfied by the Hankel determinant, and integrate. The final section describes the Painlev\'e equation.

\subsection{Summary of results}
%For a polynomial orthogonal with respect to an even weight over a symmetric interval on the real line, our convention is to write $P_n(x)$ as
%\begin{equation}
%P_n(x)=x^n + \textsf{p}_1(n)x^{n-2} +\textsf{p}_2(n)x^{n-4}+\dots+P_n(0).
%\end{equation}

For polynomials orthogonal with respect to (\ref{def:w_Rees}), Rees derived in \cite[eq. 55]{Rees1945} the
following difference equation involving $\bt_{n+1}$, $\bt_n$, $\bt_{n-1}$, $\bt_{n-2}$, $\P_1(n)$ and $\P_1(n-2)$:
\bea\label{eq:R:Rees_D_Eqn}
\bt_{n-1}C^{\rm Rees}_{n-2}&=&\bt_n C_n^{\rm Rees}+1,
\eea
where\footnote{In Rees' paper \cite{Rees1945}, $C_n^{\rm Rees}$ is identified with $H_n$.}
\bea\label{eq:R:Cn(Rees)}
C_n^{\rm Rees}&:=&\left(2n+1\right)k^2(\bt_n+\bt_{n+1})-2n(k^2+1)-4k^2\P_1(n).
\eea
%The above is not a true third order difference equation for $\bt_n$, since it also involves $\P_1(n)$ and $\P_1(n-2)$.
%However, in the case of even weight functions, $\P_1(n)$ is related to $\bt_n$ through the relation:
%$$\P_1(n)=\sum_{j=0}^{n-1}\bt_n.$$ Hence it is possible to iteratively calculate all $\bt_n$ successively given initial conditions
%$\bt_1$, $\bt_2$ and $\bt_3$.

For the weight (\ref{defn:R:w(xk2)}), we derive through the use of the ladder operators and the associated
supplementary conditions, a quadratic equation in $\P_1(n)$ with coefficients in $\bt_{n+1}$, $\bt_n$, $\bt_{n-1}$.
See the theorem below:
\begin{theorem}\label{thm:R:p1(n)^2Eqn}
The recurrence coefficient $\bt_n$ and  $\P_1(n)$ satisfies the following difference equation:
\begin{small}
\bea\label{eq:R:D_Eqn_btn_p1n^2_Intro}
0&=&k^2[\P_1(n)]^2+\Bigg[2k^2\Big(\al+\bt+n-\frac{1}{2}\Big)\bt_n-\al k^2-\bt\Bigg]\P_1(n)-
k^2\Big(\al+\bt+n+\frac{3}{2}\Big)\Big(\al+\bt+n-\frac{1}{2}\Big)\bt_n^2\nonumber\\
&&-\Bigg[k^2\Big(\al+\bt+n+\frac{3}{2}\Big)\Big(\al+\bt+n-\frac{1}{2}\Big)\bt_{n+1}-
\bigg\{\Big(\bt+n+\frac{1}{2}\Big)k^2+\Big(\al+n+\frac{1}{2}\Big)\bigg\}\Big(\al+\bt+n-\frac{1}{2}\Big)\nonumber\\
&&\qquad+k^2\Big(\al+\bt+n+\frac{1}{2}\Big)\Big(\al+\bt+n-\frac{3}{2}\Big)\bt_{n-1}\Bigg]\bt_n-\frac{n}{2}\left(\frac{n}{2}+\al+\bt\right).
\eea
\end{small}\noindent
\end{theorem}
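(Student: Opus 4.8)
The plan is to run the Chen--Ismail ladder-operator machinery on $w(x,k^2)=(1-x^2)^{\al}(1-k^2x^2)^{\bt}$ and reduce everything to a closed algebraic system. First I would set $\v(x)=-\ln w(x,k^2)$, so that $\v'(x)=\tfrac{2\al x}{1-x^2}+\tfrac{2\bt k^2 x}{1-k^2x^2}$, and compute the divided difference $\tfrac{\v'(x)-\v'(y)}{x-y}$ by partial fractions. Each piece is of the shape $\tfrac{c}{(a-bx)(a-by)}$, so the kernel is a sum of four rank-one terms with poles at $x=\pm1$ and $x=\pm1/k$. Substituting this into the integral representations of the ladder coefficients $A_n(x)$ and $B_n(x)$ and using that $w$ is even (hence $P_n^2w$ is even and $P_nP_{n-1}w$ is odd) collapses them to the two-pole forms $A_n(x)=\tfrac{2R_n}{1-x^2}+\tfrac{2\tilde R_n}{1-k^2x^2}$ and $B_n(x)=\tfrac{2r_nx}{1-x^2}+\tfrac{2s_nx}{1-k^2x^2}$, where $R_n,\tilde R_n,r_n,s_n$ are explicit integrals against $P_n^2w$ and $P_nP_{n-1}w$.

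Next I would close the system with four relations. Matching $(S_1)$ pole-by-pole gives $R_n=\al+r_n+r_{n+1}$ and $\tilde R_n=\bt k^2+s_n+s_{n+1}$. Expanding $B_n(x)$ as $x\to\infty$ together with $\int_{-1}^1 \v'(y)P_nP_{n-1}w\,dy=n\,h_{n-1}$ (integration by parts plus orthogonality) yields the sum rule $r_n+s_n/k^2=\tfrac n2$, while integrating $\tfrac{d}{dy}\!\big[yP_n(y)^2w\big]$ over $[-1,1]$ gives $R_n+\tilde R_n/k^2=n+\al+\bt+\tfrac12$. The decisive link to $\P_1(n)$ comes from comparing the coefficient of $x^{n-3}$ in the lowering relation $P_n'(x)=\bt_nA_n(x)P_{n-1}(x)-B_n(x)P_n(x)$: combined with the last identity this produces $\P_1(n)=\bt_n\big(n+\al+\bt+\tfrac12\big)-\big(r_n+s_n/k^4\big)$. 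Together with the sum rule this solves $r_n$ and $s_n$ as explicit affine functions of $\bt_n$, $\P_1(n)$ and $n$.

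The quadratic itself I would extract from $(S_2')$, namely $B_n^2+\v'B_n+\sum_{j=0}^{n-1}A_j=\bt_nA_nA_{n-1}$. Matching the double pole at $x=1$ gives $r_n^2+\al r_n=\bt_nR_nR_{n-1}$, and the double pole at $x=1/k$ gives $s_n^2/k^2+\bt s_n=\bt_n\tilde R_n\tilde R_{n-1}$. Into these I substitute $R_n=\al+r_n+r_{n+1}$, $\tilde R_n=\bt k^2+s_n+s_{n+1}$, the affine expressions for $r_m,s_m$, and finally $\P_1(n+1)=\P_1(n)-\bt_n$ and $\P_1(n-1)=\P_1(n)+\bt_{n-1}$ to reduce everything to $\P_1(n)$ and the neighbours $\bt_{n-1},\bt_n,\bt_{n+1}$. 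Each of the two relations separately carries a spurious $\bt_n\,[\P_1(n)]^2$ term; forming the linear combination equal to $(k^2-1)$ times the first plus $\tfrac{1-k^2}{k^4}$ times the second cancels those terms exactly and normalises the coefficient of $[\P_1(n)]^2$ to $k^2$, which I expect delivers (\ref{eq:R:D_Eqn_btn_p1n^2_Intro}).

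The main obstacle is not conceptual but the sustained bookkeeping in this final step: once the $[\P_1(n)]^2$ coefficient is pinned down, one must still check that the coefficient linear in $\P_1(n)$, the $\bt_n^2$, $\bt_{n+1}\bt_n$ and $\bt_{n-1}\bt_n$ terms, and the constant $-\tfrac n2\big(\tfrac n2+\al+\bt\big)$ all drop out of the same combination, a long but purely mechanical collection of terms. A secondary point needing care is the justification of the integration-by-parts identities, whose boundary contributions at $x=\pm1$ vanish outright only for $\al>0$ and must be extended to the stated range $\al>-1$ by analytic continuation in $\al$.
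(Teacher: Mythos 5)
Your proposal is correct and follows essentially the same route as the paper's proof: the same four auxiliary variables (up to sign and powers of $k$ in the normalisation), the same $(S_1)$ residue identities and sum rules, the same two double-pole identities from $(S_2^\prime)$, and a final linear combination of the latter which is exactly $-\tfrac{k^2-1}{k^2}$ times the paper's step of eliminating $\bt_nR_nR_{n-1}$ from (\ref{sys:R:MCS2p1.2}) via (\ref{sys:R:MCS2p1.1}) — so your weights $(k^2-1)$ and $(1-k^2)/k^4$ do kill the $\bt_n[\P_1(n)]^2$ terms and normalise the $[\P_1(n)]^2$ coefficient to $k^2$, reproducing (\ref{eq:R:D_Eqn_btn_p1n^2_Intro}) exactly. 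The only genuine deviation is that you obtain the affine link $\P_1(n)=\bigl(n+\al+\bt+\tfrac{1}{2}\bigr)\bt_n-(r_n+s_n/k^4)$ by matching the $x^{n-3}$ coefficient in the lowering relation (together with the integration-by-parts sum rules), whereas the paper telescopes the $(S_2)$ residue identities to reach the equivalent (\ref{eq:R:p1n(rnbtn)}); both routes yield the same affine expressions for the auxiliary variables in terms of $\bt_n$ and $\P_1(n)$, so the remaining work is, as you say, bookkeeping.
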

Solving for $\P_1(n)$ and noting the fact that $\P_1(n)-\P_1(n+1)=\bt_n$, %In obtaining a recurrence relation that is second order with respect to $\bt_n$, the above recurrence relation becomes
%quadratic with respect to $\P_1(n)$. Eliminating $\P_1(n)$ in the above equation through the use of identities (to be expanded upon in due course),
a third order difference equation for $\bt_n$ is found. This is stated in the following theorem:
\begin{theorem}\label{thm:R:3rd_Ord_D_Eqn_btn}
$\bt_n$ satisfies the following third-order difference equation:
\begin{small}
\begin{multline}
\label{eq:R:3rd_Ord_D_Eqn_btn_Intro}
(\bt_{n+1}-\bt_n)^2\Bigg\{4k^4\bt_n\bigg[\Big(\al+\bt+n+\frac{3}{2}\Big)
\Big(\al+\bt+n-\frac{1}{2}\Big)\bt_{n+1}+\Big(\al+\bt+n+\frac{1}{2}\Big)\Big(\al+\bt+n-\frac{3}{2}\Big)\bt_{n-1}\bigg] \quad \quad \quad \quad \quad
\quad \quad \\
+4k^2\Big(\al+\bt+n+\frac{1}{2}\Big)\Big(\al+\bt+n-\frac{1}{2}\Big)\bt_n\Big(2k^2\bt_n-k^2-1\Big)
%-4k^2(k^2+1)\Big(\al+\bt+n+\frac{1}{2}\Big)\Big(\al+\bt+n-\frac{1}{2}\Big)\bt_n
+(\al k^2+\bt)^2+k^2n(n+2\al+2\bt)\Bigg\}
=   \\
\Bigg\{\bt_{n+1}\bigg[k^2\Big(\al+\bt+n+\frac{5}{2}\Big)(\bt_{n+2}+\bt_{n+1})-(k^2+1)\Big(\al+\bt+n+\frac{3}{2}\Big)\bigg]
\\
-\bt_n\bigg[k^2\Big(\al+\bt+n+\frac{1}{2}\Big)(\bt_{n}+\bt_{n-1})-(k^2+1)\Big(\al+\bt+n-\frac{1}{2}\Big)\bigg]
+\frac{1}{2}+2k^2\bt_n\bigg[\Big(\al+\bt+n+\frac{1}{2}\Big)(\bt_{n+1}-\bt_n)+\bt_n+\bt_{n-1}\bigg]\Bigg\}^2.
\end{multline}
\end{small}
\end{theorem}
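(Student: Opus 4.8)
The plan is to read Theorem~\ref{thm:R:p1(n)^2Eqn} as a quadratic equation in $\P_1(n)$ and to eliminate $\P_1(n)$ using the discrete relation $\P_1(n)-\P_1(n+1)=\bt_n$ from \eqref{def:p1nbetan}. Write \eqref{eq:R:D_Eqn_btn_p1n^2_Intro} in the compact form $k^2[\P_1(n)]^2+B_n\,\P_1(n)+C_n=0$, where $B_n:=2k^2(\al+\bt+n-\tfrac12)\bt_n-\al k^2-\bt$ and $C_n$ collects the remaining terms, so that $C_n$ depends on $\bt_{n+1},\bt_n,\bt_{n-1}$ and $n$. Completing the square gives $\bigl(2k^2\P_1(n)+B_n\bigr)^2=B_n^2-4k^2C_n$. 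The first point I would establish is that this discriminant is exactly the brace multiplying $(\bt_{n+1}-\bt_n)^2$ on the left of \eqref{eq:R:3rd_Ord_D_Eqn_btn_Intro}; call it $S_n$. This is a direct collection of terms: the $\bt_n^2$, $\bt_n\bt_{n+1}$, $\bt_n\bt_{n-1}$, $\bt_n$ and constant contributions of $B_n^2-4k^2C_n$ match the five groups of $S_n$ one-for-one. For instance the $\bt_n^2$ coefficient is $4k^4(\al+\bt+n-\tfrac12)\bigl[(\al+\bt+n-\tfrac12)+(\al+\bt+n+\tfrac32)\bigr]=8k^4(\al+\bt+n-\tfrac12)(\al+\bt+n+\tfrac12)$, which is precisely the $\bt_n^2$ part of the $4k^2(\al+\bt+n+\tfrac12)(\al+\bt+n-\tfrac12)\bt_n(2k^2\bt_n-k^2-1)$ term of $S_n$.

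Next I would produce the companion \emph{linear} relation for $\P_1(n)$. Substituting $\P_1(n+1)=\P_1(n)-\bt_n$ into Theorem~\ref{thm:R:p1(n)^2Eqn} written at index $n+1$ yields a second quadratic in $\P_1(n)$, namely $k^2(\P_1(n)-\bt_n)^2+B_{n+1}(\P_1(n)-\bt_n)+C_{n+1}=0$. Subtracting the quadratic at index $n$ cancels the leading $k^2[\P_1(n)]^2$ term and leaves a relation linear in $\P_1(n)$, say $\Lambda_n\,\P_1(n)+\Xi_n=0$, with $\Lambda_n=B_{n+1}-B_n-2k^2\bt_n$ and $\Xi_n=k^2\bt_n^2-B_{n+1}\bt_n+C_{n+1}-C_n$. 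The decisive simplification is that $\Lambda_n$ factorises: using the explicit form of $B_n$ one finds $\Lambda_n=2k^2(\al+\bt+n+\tfrac12)(\bt_{n+1}-\bt_n)$, and this is where the factor $(\bt_{n+1}-\bt_n)$ of the final equation originates.

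It then remains to combine the two. From the linear relation, $2k^2\P_1(n)+B_n=B_n-\Xi_n/\bigl[(\al+\bt+n+\tfrac12)(\bt_{n+1}-\bt_n)\bigr]$, so that $(\bt_{n+1}-\bt_n)\bigl(2k^2\P_1(n)+B_n\bigr)=(\bt_{n+1}-\bt_n)B_n-\Xi_n/(\al+\bt+n+\tfrac12)$. I would verify that $\Xi_n$ is divisible by $(\al+\bt+n+\tfrac12)$ and that the resulting polynomial equals exactly the brace $Q_n$ on the right of \eqref{eq:R:3rd_Ord_D_Eqn_btn_Intro}; for example the $\bt_{n+2}$ dependence enters only through $C_{n+1}$ and reproduces the $k^2(\al+\bt+n+\tfrac52)\bt_{n+1}\bt_{n+2}$ term of $Q_n$. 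Granting this, $(\bt_{n+1}-\bt_n)(2k^2\P_1(n)+B_n)=\pm Q_n$, and squaring and inserting the completed-square identity of the first step gives $(\bt_{n+1}-\bt_n)^2 S_n=(\bt_{n+1}-\bt_n)^2(2k^2\P_1(n)+B_n)^2=Q_n^2$, which is exactly \eqref{eq:R:3rd_Ord_D_Eqn_btn_Intro}.

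I expect the main obstacle to be computational rather than conceptual: carrying out the index shift $n\mapsto n+1$ in $C_n$ and checking both that the discriminant collapses to the compact $S_n$ and that $\Xi_n/(\al+\bt+n+\tfrac12)$ is a polynomial matching $Q_n$ term-by-term. The division by $(\bt_{n+1}-\bt_n)$ is harmless: the final statement is a polynomial identity in the $\bt$'s, so it persists on the locus $\bt_{n+1}=\bt_n$ by continuity even though the elimination step formally excludes it.
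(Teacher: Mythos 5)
Your proposal is correct, and it reaches the identity by the same basic strategy as the paper --- eliminate $\P_1(n)$ between the quadratic of Theorem~\ref{thm:R:p1(n)^2Eqn} taken at indices $n$ and $n+1$, linked by $\P_1(n+1)=\P_1(n)-\bt_n$ --- but the elimination is executed along a genuinely different, radical-free route. The paper solves the quadratic explicitly, $2k^2\P_1(n)+B_n=\pm g_n$ with $g_n^2=B_n^2-4k^2C_n$ as in \eqref{eq:R:p1n(btngn)}--\eqref{eq:R:g(n)Def}, subtracts the shifted copy to get $2k^2(\al+\bt+n+\tfrac12)(\bt_{n+1}-\bt_n)\mp g_{n+1}\pm g_n=0$, must then fix the branch of the square root by telescoping back to $g_0=\al k^2+\bt$, and finally squares twice to clear the radicals, arriving at \eqref{eq:R:3rdOrdDiffEqnBtnFull}. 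Your difference-of-quadratics step replaces all of this: subtracting the two quadratics kills the $k^2[\P_1(n)]^2$ term and yields the exact linear relation $\Lambda_n\P_1(n)+\Xi_n=0$ with $\Lambda_n=2k^2(\al+\bt+n+\tfrac12)(\bt_{n+1}-\bt_n)$ (your factorisation of $\Lambda_n$ checks out, as does your identification of the discriminant $B_n^2-4k^2C_n$ with the left-hand brace $S_n$, which is precisely the paper's $g_n^2$), so no square root is ever introduced and the branch question never arises. What the paper's route buys is the stronger, un-squared third-order equation \eqref{eq:R:3ODbtn} with the correct sign, of which the theorem is the squared consequence; what your route buys is a shorter proof of exactly the stated theorem, with the single remaining computational check being that $\Xi_n$ is divisible by $\al+\bt+n+\tfrac12$ and that the quotient combines with $(\bt_{n+1}-\bt_n)B_n$ to give $\pm Q_n$ --- this is equivalent to the paper's verification that $[g_{n+1}^2-g_n^2-4k^4(\al+\bt+n+\tfrac12)^2(\bt_{n+1}-\bt_n)^2]/[4k^2(\al+\bt+n+\tfrac12)]$ equals $\pm Q_n$, so it is no extra burden. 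One small tidy-up: your closing continuity argument for the division by $\bt_{n+1}-\bt_n$ is unnecessary, since the linear relation already gives $(\bt_{n+1}-\bt_n)\cdot 2k^2\P_1(n)=-\Xi_n/(\al+\bt+n+\tfrac12)$ directly and you only ever use the product $(\bt_{n+1}-\bt_n)\bigl(2k^2\P_1(n)+B_n\bigr)$, so the derivation can be phrased without dividing by $\bt_{n+1}-\bt_n$ at all.
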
\noindent
The above difference equation can also be regarded as a quartic equation in $\bt_{n}$ and $\bt_{n+1}$, and quadratic in $\bt_{n-1}$ and $\bt_{n+2}$. 

This highlights the advantage of our approach. %where a third order
%difference equation satisfied by $\bt_n$ is found.
If we were to eliminate $\P_1(n)$ in
(\ref{eq:R:Rees_D_Eqn}), we would find $\bt_n$ satisfies a fourth order difference equation.

%Since we obtain a difference equation for $\bt_n$ one order lower than Rees, we then
%show that our result (\ref{eq:R:3rd_Ord_D_Eqn_btn_Intro}) does not conflict with Rees' result (\ref{eq:R:Rees_D_Eqn}).
Eliminating only the $[\P_1(n)]^2$ term in (\ref{eq:R:D_Eqn_btn_p1n^2_Intro}) and through the use of certain identities,
we obtain a generalization of (\ref{eq:R:Rees_D_Eqn}), valid for $\al>-1$, $\bt\in\mathbb{R}$.
\begin{theorem}\label{thm:R:Rees_General}
For $\al>-1$ and $\bt\in\mathbb{R},$ we have
\bea\label{eq:R:Gen_Rees_D_Eqn}
\bt_{n-1}C_{n-2}&=&\bt_n C_n+1,
\eea
where
\begin{small}
\bea\label{eq:R:Cn}
C_n&:=&2\left(\al+\bt+n+\frac{3}{2}\right)k^2(\bt_n+\bt_{n+1})
-2\left[\left(\bt+n+\frac{1}{2}\right)k^2+\left(\al+n+\frac{1}{2}\right)\right]-4k^2\P_1(n).
\eea
\end{small}
The equation (\ref{eq:R:Gen_Rees_D_Eqn}) reduces to Rees' equation (\ref{eq:R:Rees_D_Eqn}), if $\al=\bt=-1/2.$
\end{theorem}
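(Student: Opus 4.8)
The plan is to obtain Theorem~\ref{thm:R:Rees_General} as a linearized consequence of the quadratic relation (\ref{eq:R:D_Eqn_btn_p1n^2_Intro}) of Theorem~\ref{thm:R:p1(n)^2Eqn}. The guiding observation is that, whereas (\ref{eq:R:D_Eqn_btn_p1n^2_Intro}) is quadratic in $\P_1(n)$, the target quantity $C_n$ in (\ref{eq:R:Cn}) depends on $\P_1(n)$ only linearly; hence the route is to kill the $[\P_1(n)]^2$ term, exactly as the paper indicates. Since the coefficient of that square is the constant $k^2$, independent of $n$, this elimination can be performed by subtracting the relation from a shifted copy of itself.

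Concretely, I would first write (\ref{eq:R:D_Eqn_btn_p1n^2_Intro}) as $Q_n=0$, a quadratic in $\P_1(n)$ with leading coefficient $k^2$ whose remaining coefficients involve only $\bt_{n-1},\bt_n,\bt_{n+1}$ and the index $n$. I would then write the same relation one step down, $Q_{n-1}=0$, and eliminate $\P_1(n-1)$ in favour of $\P_1(n)$ using the telescoping identity $\P_1(n-1)=\P_1(n)+\bt_{n-1}$, which follows from (\ref{def:p1nbetan}). After this substitution $Q_{n-1}$ is again a quadratic in $\P_1(n)$ with the \emph{same} leading coefficient $k^2$, now carrying the additional variable $\bt_{n-2}$. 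Forming $Q_n-Q_{n-1}$ therefore cancels the $[\P_1(n)]^2$ term exactly and leaves a relation that is \emph{linear} in $\P_1(n)$ and involves precisely $\bt_{n-2},\bt_{n-1},\bt_n,\bt_{n+1}$ --- the same variables that appear in the statement.

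It then remains to reorganize this linear relation into the product form $\bt_{n-1}C_{n-2}=\bt_nC_n+1$. The key algebraic identity is $\P_1(n-2)=\P_1(n)+\bt_{n-1}+\bt_{n-2}$, again a consequence of (\ref{def:p1nbetan}); substituting it into $C_{n-2}$ reduces $\bt_{n-1}C_{n-2}-\bt_nC_n-1$ to an expression linear in $\P_1(n)$ in the very same variables, so that matching it against $Q_n-Q_{n-1}$ becomes a finite comparison of coefficients. I expect this coefficient-matching to be the main obstacle: one must verify not only that the $\P_1(n)$-linear and $\P_1(n)$-free parts coincide, but in particular that the additive remainder is exactly the constant $+1$, which is the discrete integration constant encoding $\bt_n=h_n/h_{n-1}$. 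Finally, specializing $\al=\bt=-\tfrac12$ collapses $\al+\bt+n+\tfrac32$ to $n+\tfrac12$ and $(\bt+n+\tfrac12)k^2+(\al+n+\tfrac12)$ to $n(k^2+1)$, so that $C_n$ becomes Rees' $C_n^{\mathrm{Rees}}$ and (\ref{eq:R:Gen_Rees_D_Eqn}) reduces to (\ref{eq:R:Rees_D_Eqn}), furnishing a consistency check on the constant term.
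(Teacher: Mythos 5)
Your proposal is correct and follows essentially the same route as the paper: the paper likewise eliminates the square by combining (\ref{eq:R:D_Eqn_btn_p1n^2_Intro}) with its copy at $n-1$ (organized there via the identity $[\P_1(n-1)]^2-[\P_1(n)]^2=\bt_{n-1}^2+2\bt_{n-1}\P_1(n)$, which is exactly your subtraction $Q_n-Q_{n-1}$ after the substitution $\P_1(n-1)=\P_1(n)+\bt_{n-1}$), and then recasts the resulting linear relation into the product form using $\bt_{n-1}\P_1(n)=\bt_{n-1}\big(\P_1(n-2)-\bt_{n-1}-\bt_{n-2}\big)$. Your concluding specialization $\al=\bt=-\tfrac12$ also matches the paper's consistency check.
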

We  present here a second order difference equation satisfied by $\bt_n$, and later give a proof
independent of Theorem \ref{thm:R:p1(n)^2Eqn} and Theorem \ref{thm:R:3rd_Ord_D_Eqn_btn}.
\begin{theorem}\label{thm:R:2nd_Ord_D_Eqn}
The recurrence coefficient $\bt_n$ satisfies a second order difference equation,
which turns out to be an algebraic equation of total degree $6$ in $\bt_{n+1}$, $\bt_n$ and $\bt_{n-1}$:
\bea\label{eq:R:2nd_Ord_Diff_Eqn}
\sum\limits_{p=0}^{3}\sum\limits_{q=0}^{6}\sum\limits_{r=0}^{3}\!{c}_{p,q,r}\,\bt_{n+1}^{p}\,\bt_n^{q}\,\bt_{n-1}^{r}&=&0,\qquad\qquad
p+q+r\leq 6.
\eea
The complete list of the 34 {\em non-zero} coefficients ${c}_{p,q,r}$ are presented in \ref{App:2ndOrd_DE_Coeffs}.
\end{theorem}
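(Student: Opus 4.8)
The plan is to bypass Theorems~\ref{thm:R:p1(n)^2Eqn} and \ref{thm:R:3rd_Ord_D_Eqn_btn} entirely and work directly with the ladder-operator data of Section~\ref{Sec:R:LadOpAuxVar}. Since $w(x,k^2)$ is even, the monic polynomials have definite parity and the recurrence carries no diagonal term, so the coefficient functions $A_n(x)$ and $B_n(x)$ are rational in $x$ with simple poles only at $x=\pm 1$ and $x=\pm 1/k$. By the $x\mapsto -x$ symmetry the residues at the $+$ and $-$ poles are tied together, leaving a small set of auxiliary variables: the residues $\Ra_n,\Rb_n$ of $A_n$ at the $\al$-poles $(\pm1)$ and the $\bt$-poles $(\pm1/k)$, and the residues $\ra_n,\rb_n$ of $B_n$. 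First I would substitute these partial-fraction forms, together with the partial fractions of $\v'$ (the logarithmic derivative of the weight), into the supplementary conditions $(S_1)$, $(S_2)$ and $(S_2^\pr)$, and equate residues pole-by-pole as well as matching the polynomial parts. This turns the three functional identities into a finite algebraic system linking $\Ra_n,\Rb_n,\ra_n,\rb_n$ to $\bt_n$ and $\P_1(n)$.

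Next, the linear conditions $(S_1)$ and $(S_2)$ should let me solve for the four auxiliary variables as explicit expressions in $\bt_{n-1},\bt_n,\bt_{n+1}$ and $\P_1(n)$. Feeding these into the nonlinear sum rule $(S_2^\pr)$ is the step that produces closed relations: after substitution all dependence on indices outside $\{n-1,n,n+1\}$ must cancel, leaving polynomial relations in the four quantities $\bt_{n-1},\bt_n,\bt_{n+1},\P_1(n)$ alone. I expect to extract two independent such relations --- one quadratic in $\P_1(n)$ (the analogue of \eqref{eq:R:D_Eqn_btn_p1n^2_Intro}, here re-derived from scratch so as to keep the argument independent of Theorem~\ref{thm:R:p1(n)^2Eqn}) and one that is linear in $\P_1(n)$, of the schematic form $q\,\P_1(n)+p=0$ with $p,q$ polynomials in $(\bt_{n+1},\bt_n,\bt_{n-1})$.

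I would then eliminate $\P_1(n)$ between the two relations. Writing the quadratic as $k^2[\P_1(n)]^2+B\,\P_1(n)+C=0$ and the linear one as $\P_1(n)=-p/q$, substitution and clearing of $q^2$ gives $k^2p^2-Bpq+Cq^2=0$, a polynomial in $(\bt_{n+1},\bt_n,\bt_{n-1})$ only. A degree count then pins the total degree: $B$ is of degree $1$ and $C$ of degree $2$ in the three $\bt$'s, so with $\deg q=2$ and $\deg p=3$ every surviving term lands at total degree $6$, matching \eqref{eq:R:2nd_Ord_Diff_Eqn}. Expanding and collecting the monomials $\bt_{n+1}^{p}\bt_n^{q}\bt_{n-1}^{r}$ with $p+q+r\le 6$ yields the coefficients $c_{p,q,r}$, which I would check term-by-term against the list in \ref{App:2ndOrd_DE_Coeffs}, confirming that exactly $34$ are non-zero.

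The main obstacle is the elimination together with the degree control. One must verify that after clearing denominators the factor $q$ (and any further accidental common factor) drops out, so that no parasitic branch is introduced and the total degree does not exceed $6$; and, more importantly, that the would-be $\bt_{n+2}$ and $\bt_{n-2}$ contributions cancel identically at the stage where $(S_2^\pr)$ is reduced --- it is precisely this cancellation that makes the equation genuinely second order rather than third. Arranging the bookkeeping so that the very large raw expansion collapses to the stated $34$ coefficients, rather than a mass of monomials surviving through fortuitous cancellation, is where the real labour lies, and a symbolic-algebra verification is the natural safeguard.
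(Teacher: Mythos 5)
Your proposal is correct and takes essentially the same route as the paper: both reduce the residue identities of $(S_1)$, $(S_2)$, $(S_2^\prime)$ to a polynomial system in $r_n$, $\P_1(n)$, $\bt_{n+1}$, $\bt_n$, $\bt_{n-1}$ and then eliminate $r_n$ and $\P_1(n)$ --- the paper does this via the three equations of Lemma~\ref{lem:R:2nd_Ord_Sys_(rnbtnp1n)} and a computer-algebra elimination, which yields exactly the linear relation $q\,\P_1(n)+p=0$ with $\deg q=2$, $\deg p=3$ that you posit (equations (\ref{eq:R:p1n(btn)})--(\ref{eq:R:rn(btn)})), so your resultant $k^2p^2-Bpq+Cq^2=0$ and degree count reproduce (\ref{eq:R:2nd_Ord_Diff_Eqn}). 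The $\bt_{n\pm2}$ cancellation you flag as a risk never actually arises, since the system fed into the elimination involves only the indices $n-1$, $n$, $n+1$ from the outset.
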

Using methods similar to the proof of Theorem \ref{thm:R:2nd_Ord_D_Eqn}, we obtain
 a {\em second order} difference equation satisfied by $\P_1(n)$, presented below:
\begin{theorem}\label{thm:R:2D_Eqn_p1n}
The coefficient of the sub-leading term of $P_n(x)$, $\P_1(n)$, satisfies the following second order non-linear difference equation:
\begin{scriptsize}
\bea\label{eq:R:2D_Eqn_p1n}
0&=&
{k}^4 \left(\alpha+\beta+n-\frac{3}{2}\right)\left(\alpha+\beta+n+\frac{1}{2}\right)^2
\bigg[
\P_{1}(n+1)^2\Big(\P_{1}(n)-\P_{1}(n-1)\Big ) - \P_{1}(n-1)^2\Big(\P_{1}(n)-\P_{1}(n+1)\Big)
\bigg]
\nonumber\\&&
+ \left(\alpha+\beta+n-\frac{1}{2}\right)^2
\bigg[
{k}^2 \left(\alpha+\beta+n-\frac{3}{2}\right) \P_{1}(n-1)
-{k}^2 \left(\alpha+\beta+n+\frac{1}{2}\right)\P_{1}(n+1)
-\left(\bt+n-\frac{1}{2}\right) {k}^2 -\al-n+\frac{1}{2}
\bigg]
 k^2\P_{1}(n)^2
\nonumber\\&&
+ \left(\alpha+\beta+n+\frac{1}{2}\right)\left(\alpha+\beta+n-\frac{3}{2}\right)
k^4\P_{1}(n+1) \P_{1}(n) \P_{1}(n-1)
\nonumber\\&&
+ k^2\left(\alpha+\beta+n+\frac{1}{2}\right)\left(\alpha+\beta+n-\frac{3}{2}\right)
\left[
\left(\beta+n-\frac{1}{2}\right)k^2+\al+n-\frac{1}{2}
\right]
\Big(\P_1(n+1)\P_1(n)+\P_1(n)\P_1(n-1)-\P_1(n+1)\P_1(n-1)\Big)
\nonumber\\&&
+ (\al k^2+\bt)
\Bigg[
\left(\alpha+\beta+n+\frac{1}{2}\right)\P_1(n+1)-
\left(\alpha+\beta+n-\frac{3}{2}\right)\P_1(n-1)
\Bigg]
k^2\P_1(n)
 \nonumber\\&&
 + \frac{(n-1)}{2}\left(\alpha+\beta+n+\frac{1}{2}\right)\left(\alpha+\beta+\frac{n}{2}-\frac{1}{2}\right) k^2\P_{1}(n+1)
- \frac{n}{2}\left(\alpha+\beta+n-\frac{3}{2}\right)\left(\alpha+\beta+\frac{n}{2}\right)
k^2\P_{1}(n-1)
\nonumber\\&&
+\Bigg[
\alpha \left(\beta+n-\frac{1}{2}\right){k}^4
+\frac{1}{2}\left(\alpha-\beta+n-\frac{1}{2}\right)\left(\alpha-\beta-n+\frac{1}{2}\right){k}^2 +\bt\left(\alpha+n-\frac{1}{2}\right)
\Bigg] \P_{1}(n)
+ \frac{1}{4}n(n-1) (\alpha {k}^2+\bt).
\eea
\end{scriptsize}

\end{theorem}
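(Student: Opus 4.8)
\section*{Proof proposal for Theorem~\ref{thm:R:2D_Eqn_p1n}}

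The plan is to follow the ladder-operator route used for Theorem~\ref{thm:R:2nd_Ord_D_Eqn}, but to arrange the final elimination so that $\bt_n$, rather than $\P_1(n)$, is the quantity removed. First I would record, from Section~\ref{Sec:R:LadOpAuxVar}, the explicit rational forms of the lowering and raising coefficients $A_n(z)$ and $B_n(z)$ attached to the weight \eqref{defn:R:w(xk2)}. Writing $\v(x)=-\ln w(x,k^2)$, the function $\v'(x)=\frac{2\al x}{1-x^2}+\frac{2\bt k^2 x}{1-k^2x^2}$ has simple poles only at $x=\pm1$ and $x=\pm1/k$, so both $A_n(z)$ and $B_n(z)$ are rational in $z$ with poles confined to these four points, and the even symmetry of the weight pairs the residues into two levels. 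This leaves only a small set of auxiliary variables, and the supplementary conditions $(S_1)$, $(S_2)$ and $(S_2')$, matched pole by pole, close them into a finite coupled system relating the auxiliary variables to $\bt_n$ and $\P_1(n)$ --- precisely the system already assembled for Theorem~\ref{thm:R:2nd_Ord_D_Eqn}.

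From this system one reads off Theorem~\ref{thm:R:p1(n)^2Eqn}, i.e. the quadratic \eqref{eq:R:D_Eqn_btn_p1n^2_Intro} tying $\P_1(n)$ to $\bt_{n-1},\bt_n,\bt_{n+1}$, as well as the further relations among the residues needed to fix the remaining auxiliary variables. Whereas the proof of Theorem~\ref{thm:R:2nd_Ord_D_Eqn} eliminates $\P_1(n)$ to leave $\bt_{n-1},\bt_n,\bt_{n+1}$, here I would instead eliminate every auxiliary variable and then convert each recurrence coefficient into a difference of $\P_1$'s via \eqref{def:p1nbetan}, using $\bt_{n-1}=\P_1(n-1)-\P_1(n)$ and $\bt_n=\P_1(n)-\P_1(n+1)$. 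The goal is the total-degree-three relation \eqref{eq:R:2D_Eqn_p1n} in the three consecutive values $\P_1(n-1),\P_1(n),\P_1(n+1)$.

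The main obstacle is keeping the result genuinely second order. In \eqref{eq:R:D_Eqn_btn_p1n^2_Intro} the coefficient $\bt_{n+1}$ occurs through the product $\bt_{n+1}\bt_n$, and the naive substitution $\bt_{n+1}=\P_1(n+1)-\P_1(n+2)$ would drag in $\P_1(n+2)$, just as a backward step threatens to introduce $\P_1(n-2)$; a bare count of indices thus predicts a four-point equation. The heart of the argument is to exploit the remaining ladder relations to eliminate the forward data attached to level $n+1$ directly against the level-$n$ and level-$(n-1)$ data, so that after passing to $\P_1$-differences the coefficients multiplying $\P_1(n\pm2)$ cancel identically. Establishing this cancellation --- equivalently, that the ladder system forces a true three-point relation --- is where the real work lies; the setting-up is routine by comparison.

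Finally I would collect the surviving terms, clear denominators, and organize the outcome by the parameter blocks $\al+\bt+n\pm\tfrac12$, $\al+\bt+n\pm\tfrac32$ and the combination $\al k^2+\bt$ visible in \eqref{eq:R:2D_Eqn_p1n}. As the intermediate expressions are bulky, this bookkeeping is best delegated to computer algebra. For verification I would check that \eqref{eq:R:2D_Eqn_p1n} is compatible with the quadratic \eqref{eq:R:D_Eqn_btn_p1n^2_Intro} under $\bt_n=\P_1(n)-\P_1(n+1)$, and that setting $\al=\bt=-\tfrac12$ is consistent with Rees' equation \eqref{eq:R:Rees_D_Eqn}.
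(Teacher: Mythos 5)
Your overall strategy coincides with the paper's: both go back to the three-equation ladder system of Lemma~\ref{lem:R:2nd_Ord_Sys_(rnbtnp1n)} rather than substituting $\P_1$-differences into the finished second-order equation \eqref{eq:R:2nd_Ord_Diff_Eqn} for $\bt_n$, which would only yield a third-order relation for $\P_1$. However, there is a genuine gap precisely at the point you yourself flag as ``where the real work lies.'' You assert that after ``eliminat[ing] every auxiliary variable and then convert[ing] each recurrence coefficient into a difference of $\P_1$'s'' the coefficients of $\P_1(n\pm2)$ will ``cancel identically,'' but you supply no mechanism, and taken literally your order of operations forecloses the route you need: once all auxiliary variables are eliminated you are back at \eqref{eq:R:2nd_Ord_Diff_Eqn}, there are no ``remaining ladder relations'' left to play off against the forward data, and the coefficient of $\P_1(n+2)$ in the resulting third-order relation is not identically zero (the paper states explicitly that this naive substitution gives a third-order equation). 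Note also that $\P_1(n-2)$ never threatens to appear, since $\bt_{n-1}=\P_1(n-1)-\P_1(n)$ involves no index below $n-1$.

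The actual device is an elimination, not a cancellation, and it requires keeping $r_n$ alive. Substitute $\bt_{n+1}=\P_1(n+1)-\P_1(n+2)$, $\bt_n=\P_1(n)-\P_1(n+1)$, $\bt_{n-1}=\P_1(n-1)-\P_1(n)$ into the three equations \eqref{eq:R:2nd_Ord_Sys_1}--\eqref{eq:R:2nd_Ord_Sys_3} of Lemma~\ref{lem:R:2nd_Ord_Sys_(rnbtnp1n)} \emph{before} any elimination. This produces three polynomial relations in the five quantities $r_n$, $\P_1(n+2)$, $\P_1(n+1)$, $\P_1(n)$, $\P_1(n-1)$; treating the last three as parameters and eliminating the two unknowns $r_n$ and $\P_1(n+2)$ from the three equations (by resultants, or the same computer-algebra elimination used in the proof of Theorem~\ref{thm:R:2nd_Ord_D_Eqn}) leaves exactly one relation among three consecutive values of $\P_1$, namely \eqref{eq:R:2D_Eqn_p1n}. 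The counting --- three equations, two eliminated unknowns --- is what guarantees a genuine three-point equation; no identical vanishing of coefficients is needed, nor does it occur. Your concluding consistency checks against \eqref{eq:R:D_Eqn_btn_p1n^2_Intro} and Rees' case $\al=\bt=-\tfrac12$ are sensible and worth carrying out.
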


It is clear that,
$\P_1(n)$, will depend on $k^2$; but for brevity, we
do not display this dependence, unless required.

\begin{theorem}\label{thm:R:Diff_Eqn_Pn}
The orthogonal polynomials $P_{n}(x)$ with respect to the weight (\ref{defn:R:w(xk2)}) satisfy
\bea\label{eq:R:DiffEqnPn_Gen}
P_n^{\;\prime\prime}(x)+\left(\frac{1}{2}\frac{X^{\prime}(x)}{X(x)}
-\frac{M_n^\prime(x)}{M_n(x)}\right)P_n^{\;\prime}(x)+\left(\frac{L_n(x)M_n^\prime(x)}{Y(x)M_n(x)}+\frac{U_n(x)}{Y(x)}\right)P_n(x)&=&0,
\eea
where
\bea
X(x)&:=&(1-x^2)^{2\al+2}(1-k^2x^2)^{2\bt+2},\\
Y(x)&:=&(1-x^2)(1-k^2x^2),\\
M_n(x)&:=&-2\left(\al+\bt+n+\frac{1}{2}\right)k^2x^2-C_n,\\
%C_n&:=&2\left(\al+\bt+n+\frac{3}{2}\right)k^2(\bt_n+\bt_{n+1})-2\left[\left(\bt+n+\frac{1}{2}\right)k^2+\al+n+\frac{1}{2}\right]-4k^2\P_1(n),\qquad\\
L_n(x)&:=&x\Big[nk^2x^2-n(1+k^2)+2k^2\left(\al+\bt+n+\frac{1}{2}\right)\bt_n-2k^2\P_1(n)\Big],\\
U_n(x)&:=&-k^2x^2n(n+2\al+2\bt+3) + 2k^2(2n+2\al+2\bt+1)(\P_1(n)-\bt_n)+nk^2(n+2\bt+1)\nonumber\\&&
+n(n+2\al+1),
\eea
and $C_n$ is given by (\ref{eq:R:Cn}).
\end{theorem}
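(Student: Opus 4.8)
The plan is to derive (\ref{eq:R:DiffEqnPn_Gen}) from the lowering ladder relation for the monic orthogonal polynomials together with the supplementary conditions $(S_1)$, $(S_2)$ and $(S_2')$, following the ladder-operator scheme of \cite{ChenIsmail1997,ChenIsmail2005}. Writing $\v(x):=-\ln w(x,k^2)=-\al\ln(1-x^2)-\bt\ln(1-k^2x^2)$, the polynomials satisfy a relation of the form $P_n^\prime(x)=-B_n(x)P_n(x)+\bt_nA_n(x)P_{n-1}(x)$, where
\[
A_n(x)=\frac{1}{h_n}\int_{-1}^{1}\frac{\v^\prime(x)-\v^\prime(y)}{x-y}\,P_n^2(y)\,w(y,k^2)\,dy,\qquad
B_n(x)=\frac{1}{h_{n-1}}\int_{-1}^{1}\frac{\v^\prime(x)-\v^\prime(y)}{x-y}\,P_n(y)P_{n-1}(y)\,w(y,k^2)\,dy.
\]
Differentiating this relation and eliminating $P_{n-1}$ by means of the three-term recurrence (\ref{def:recurrence}) and the raising relation yields the canonical second-order linear ODE
\[
P_n^{\prime\prime}(x)-\Big(\v^\prime(x)+\tfrac{A_n^\prime(x)}{A_n(x)}\Big)P_n^\prime(x)
+\Big(B_n^\prime(x)-B_n(x)\tfrac{A_n^\prime(x)}{A_n(x)}+\textstyle\sum_{j=0}^{n-1}A_j(x)\Big)P_n(x)=0 .
\]
The entire task is therefore to compute $A_n$, $B_n$ explicitly and to reduce the two bracketed coefficients to the rational forms stated in (\ref{eq:R:DiffEqnPn_Gen}).

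First I would compute the divided difference of $\v^\prime(x)=\tfrac{2\al x}{1-x^2}+\tfrac{2\bt k^2 x}{1-k^2x^2}$. A partial-fraction computation gives the separable kernel
\[
\frac{\v^\prime(x)-\v^\prime(y)}{x-y}
=\frac{\al}{(1-x)(1-y)}+\frac{\al}{(1+x)(1+y)}
+\frac{\bt k^2}{(1-kx)(1-ky)}+\frac{\bt k^2}{(1+kx)(1+ky)} .
\]
Inserting this into the integrals and exploiting the parity of the weight (so that $P_n^2 w$ is even and $P_nP_{n-1}w$ is odd), the $\pm$ pairs collapse and one finds that $A_n$ and $B_n$ are rational functions with common denominator $Y(x)=(1-x^2)(1-k^2x^2)$,
\[
A_n(x)=\frac{M_n(x)}{Y(x)},\qquad B_n(x)=-\frac{L_n(x)}{Y(x)},
\]
the numerators being an even quadratic and an odd cubic whose coefficients are the auxiliary integrals $\tfrac1{h_n}\!\int\!\tfrac{P_n^2w}{1-y}\,dy$, $\tfrac1{h_n}\!\int\!\tfrac{P_n^2w}{1-ky}\,dy$ (and the analogues weighted by $P_nP_{n-1}$). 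Using the identities for these auxiliary quantities established in Section~\ref{Sec:R:LadOpAuxVar} --- in particular the sum rules expressing their leading coefficients in terms of $\bt_n$, $\bt_{n+1}$ and $\P_1(n)$ --- these numerators match $M_n(x)$ and $L_n(x)$ as in the statement, with $C_n$ given by (\ref{eq:R:Cn}); the sign in $B_n=-L_n/Y$ is fixed by consistency with the final coefficient.

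With $A_n=M_n/Y$ the coefficient of $P_n^\prime$ is immediate: since $\tfrac{A_n^\prime}{A_n}=\tfrac{M_n^\prime}{M_n}-\tfrac{Y^\prime}{Y}$ and a direct check gives $\tfrac12\tfrac{X^\prime}{X}=-\v^\prime+\tfrac{Y^\prime}{Y}$ for $X(x)=(1-x^2)^{2\al+2}(1-k^2x^2)^{2\bt+2}$, one obtains $-\big(\v^\prime+\tfrac{A_n^\prime}{A_n}\big)=\tfrac12\tfrac{X^\prime}{X}-\tfrac{M_n^\prime}{M_n}$, exactly the stated coefficient. For the coefficient of $P_n$ I would eliminate the sum via $(S_2')$, namely $\sum_{j=0}^{n-1}A_j=\bt_nA_nA_{n-1}-B_n^2-\v^\prime B_n$, so that it becomes $B_n^\prime-B_n\tfrac{A_n^\prime}{A_n}+\bt_nA_nA_{n-1}-B_n^2-\v^\prime B_n$. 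The term $-B_n\tfrac{A_n^\prime}{A_n}$ already produces $\tfrac{L_nM_n^\prime}{YM_n}$, and the genuinely laborious step --- the main obstacle --- is to show that the remainder $B_n^\prime-\tfrac{L_nY^\prime}{Y^2}+\bt_nA_nA_{n-1}-B_n^2-\v^\prime B_n$ collapses to a single quadratic-over-$Y$ expression $U_n(x)/Y(x)$ with precisely the listed coefficients. This requires substituting the explicit auxiliary values and repeatedly invoking $(S_1)$, $(S_2)$ and the relation $\P_1(n)-\P_1(n+1)=\bt_n$ to cancel the apparent higher-order poles at $x=\pm1,\pm1/k$ and the spurious quartic in $x$, leaving exactly $U_n(x)$. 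Specializing to $\al=\bt=-\tfrac12$ should reproduce Rees' equation (\ref{eq:R:ReesDiffEqn}), providing a useful check.
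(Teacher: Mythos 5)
Your proposal is correct and follows essentially the same route as the paper: eliminate $P_{n-1}$ from the ladder relations to get the canonical ODE, show that $A_n=M_n(x)/Y(x)$ and $B_n=-L_n(x)/Y(x)$ via the partial-fraction kernel and the auxiliary-variable identities, and obtain the $P_n^\prime$-coefficient from $A_n^\prime/A_n=M_n^\prime/M_n-Y^\prime/Y$ together with $\tfrac12 X^\prime/X=-\textsf{v}^\prime+Y^\prime/Y$, all exactly as in Section 4.4. The one place you diverge is the $P_n$-coefficient: you substitute $(S_2^\prime)$ wholesale, replacing $\sum_{j=0}^{n-1}A_j$ by $\bt_nA_nA_{n-1}-B_n^2-\textsf{v}^\prime B_n$ and then cancelling the resulting double poles at $x=\pm1,\pm1/k$, whereas the paper evaluates the sum directly using the closed form for $\sum_{j=0}^{n-1}R_j$ (Theorem 3.3, itself extracted from the simple-pole residues of $(S_2^\prime)$). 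The two are equivalent in content — the double-pole cancellations you need are precisely the residue relations (3.12)–(3.13) already recorded in Section 3 — but the paper's route lets the sum rule absorb that bookkeeping in one step, while yours defers it to the final simplification; either way the remainder collapses to $U_n(x)/Y(x)$ as claimed.
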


With the choice of $\alpha = -1/2$, $\beta = -1/2$ this reduces to the original equation of Rees.

The latter sections of this paper are then devoted to obtaining the following result, which is calculated using two alternative methods in
Sections \ref{Sec:R:Hankel_LargeN} and \ref{Sec:R:Hankel_LargeN_Toda} respectively.
\begin{theorem}\label{thm:R:Dn_LargeNExp_Intro}
The Hankel determinant $D_n[w(\cdot,k^2)]$ has the following asymptotic expansion in $n$:
\bea\label{eq:R:Dn(k2)_Large_n_Intro}
D_n[w(\cdot,k^2)]
&=&
E\;n^{\al^2-1/4}\; 2^{-n(n+2\al)}\;(2\pi)^{n}\;\left(\frac{1+\sqrt{1-k^2}}{2}\right)^{2\bt n}
\nonumber\\*&&
\times\;\exp
\Bigg[
\frac{2a_3}{n}
+\frac{2a_4-a_2\Big(4a_2+1\Big)}{3n^2}
%\nonumber\\*&&
%\qquad\quad\;
+\frac{a_5-a_3\Big(4a_2+1\Big)}{3n^3}+{\rm O}\left(\frac{1}{n^4}\right)
\Bigg],
%\nonumber\\
\eea
where the $n$-independent constant $E$ is given by
\bea
E&:=&
\frac{(2\pi)^{\al}\pi^{1/2}[G(1/2)]^2}{2^{2(\al^2+\al\bt+\bt^2)}[G(1+\al)]^2}\cdot\frac{\left(1+\sqrt{1-k^2}\right)^{2\bt(\al+\bt)}}{(1-k^2)^{\bt(\al+\bt/2)}}.
\eea
In the above, the function $G$ refers to the Barnes $G$-function, an entire function that satisfies the recurrence relation
$G(z+1)=\Gamma(z)G(z)$, $G(1):=1$; and the coefficients $a_2(k^2,\alpha,\beta)$---$a_5(k^2,\alpha,\beta)$ are the large $n$ expansion coefficients for $\bt_n$, which can be found in Section \ref{sec:R:2_D_btn_Ln_Exp}.
\end{theorem}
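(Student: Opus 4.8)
The engine of the argument is the exact relation (\ref{def:dnbtn}), $\bt_n=D_{n+1}D_{n-1}/D_n^2$, which says that $\ln\bt_n$ is the second central difference of $\ln D_n$: setting $F_n:=\ln D_n$ one has $\ln\bt_n=F_{n+1}-2F_n+F_{n-1}$. Hence, once the complete $1/n$-expansion of $\bt_n$ is in hand, $\ln D_n$ can be recovered by inverting this second-difference operator. Crucially, the kernel of $F\mapsto F_{n+1}-2F_n+F_{n-1}$ consists of the sequences $A+Bn$, so the inversion determines $\ln D_n$ only up to an additive $A+Bn$, that is, up to the overall constant $E=\rme^{A}$ and the geometric prefactor $(\rme^{B})^{n}$. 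The proof therefore splits into three tasks: (i) the asymptotics of $\bt_n$; (ii) the double summation reconstructing the $n$-dependent part of $\ln D_n$; and (iii) the independent determination of $E$ and of the linear exponent $B$.

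For (i) I would substitute $\bt_n=\tfrac14+a_1 n^{-1}+a_2 n^{-2}+\cdots$ into the second-order difference equation of Theorem~\ref{thm:R:2nd_Ord_D_Eqn} and solve recursively in powers of $1/n$; the leading value $\tfrac14$ is forced by the support $[-1,1]$, one finds $a_1=0$, and the coefficients $a_2,\dots,a_5$ emerge explicitly (this is the computation of Section~\ref{sec:R:2_D_btn_Ln_Exp}). With $a_1=0$, taking logarithms yields $\ln\bt_n=-2\ln 2+4a_2 n^{-2}+4a_3 n^{-3}+(4a_4-8a_2^2)n^{-4}+(4a_5-16a_2a_3)n^{-5}+\cdots$. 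For (ii) I would then invert $F_{n+1}-2F_n+F_{n-1}=\ln\bt_n$ with the matching ansatz
\be
F_n=-(\ln 2)\,n^2+Bn+A+\Big(\al^2-\tfrac14\Big)\ln n+\sum_{j\geq 1}d_j\,n^{-j},
\ee
using $\Delta^2 n^2=2$, $\Delta^2(\ln n)=\ln(1-n^{-2})=-n^{-2}-\tfrac12 n^{-4}-\cdots$ and $\Delta^2 n^{-s}=s(s+1)n^{-s-2}+\cdots$, and balancing powers of $1/n$. The constant term fixes the $n^2$-coefficient as $-\ln 2$, i.e.\ the $2^{-n^2}$ factor inside $2^{-n(n+2\al)}$; the $n^{-2}$ balance forces the $\ln n$ exponent to equal $-4a_2=\al^2-\tfrac14$; and the $n^{-3},n^{-4},n^{-5}$ balances give $d_1=2a_3$, $d_2=\tfrac13\big(2a_4-a_2(4a_2+1)\big)$, $d_3=\tfrac13\big(a_5-a_3(4a_2+1)\big)$, precisely the correction terms appearing in the exponential of (\ref{eq:R:Dn(k2)_Large_n_Intro}). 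The pieces $A$ and $Bn$ lie in the kernel of $\Delta^2$ and remain undetermined at this stage.

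Task (iii) is the main obstacle: fixing $B$ and $A$, equivalently the geometric prefactor $2^{-2\al n}(2\pi)^{n}\big(\tfrac{1+\sqrt{1-k^2}}{2}\big)^{2\bt n}$ and the constant $E$ with its Barnes-$G$ factors. These are exactly the data annihilated by the difference equation, so they demand external input. My route would be to recast $D_n[w(\cdot,k^2)]$ as a Toeplitz-plus-Hankel determinant: under $x=\cos\theta$ the even weight becomes $(\sin^2\theta)^{\al}(1-k^2\cos^2\theta)^{\bt}$, a symbol on the unit circle with Fisher--Hartwig singularities of exponent $\al$ at $\theta=0,\pi$ and a smooth, strictly positive factor $(1-k^2\cos^2\theta)^{\bt}$ (positive because $k^2<1$ on $[-1,1]$). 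The known strong Szeg\H{o}/Fisher--Hartwig asymptotics for such determinants then deliver the leading behaviour: the singular exponents $\al$ account for the power $n^{\al^2-1/4}$, the factor $2^{-2\al n}$ and the Barnes-$G$ constant in $E$, while the smooth factor $(1-k^2\cos^2\theta)^{\bt}$, through its geometric mean and Szeg\H{o} function together with the normalisation of the transform, produces the $(2\pi)^{n}$ and $\big(\tfrac{1+\sqrt{1-k^2}}{2}\big)^{2\bt n}$ factors; reading off this leading term pins down $A$ and $B$. This is the computation of Section~\ref{Sec:R:Hankel_LargeN}.

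As an independent check I would use the alternative method of Section~\ref{Sec:R:Hankel_LargeN_Toda}: combine the large-$n$ expansion of $\P_1(n)$, obtained from $\P_1(n)=-\sum_{j}\bt_j$ in (\ref{def:p1n}) together with Theorem~\ref{thm:R:2D_Eqn_p1n}, with the Toda-type time-evolution satisfied by $D_n$, and integrate. Agreement of the two routes, and consistency with the $\bt_n$-based reconstruction above, secures the value of $E$ and completes the expansion (\ref{eq:R:Dn(k2)_Large_n_Intro}). I expect all the genuine difficulty to sit in (iii): steps (i)--(ii) are systematic, order-by-order manipulations, whereas extracting the non-perturbative constant $E$ and the exponential rate $B$ requires the full strength of the Fisher--Hartwig theory for the associated singular symbol.
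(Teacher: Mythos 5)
Your proposal is correct and follows essentially the same route as the paper's Section \ref{Sec:R:Hankel_LargeN}: the correction terms come from matching the $1/n$-expansion of $\bt_n$ against the second-difference relation $\log\bt_n=F_{n+1}-2F_n+F_{n-1}$ (you invert the difference operator exactly on the ansatz, the paper uses the equivalent approximation $\partial_n^2+\tfrac{1}{12}\partial_n^4$, and your values $d_1,d_2,d_3$ agree with the paper's $-c_1,-c_2,-c_3$), while the kernel contributions $A+Bn$ are fixed by the Toeplitz-plus-Hankel/Fisher--Hartwig asymptotics, exactly as in the paper. Your closing remark about the Toda-evolution cross-check reproduces the paper's Section \ref{Sec:R:Hankel_LargeN_Toda}.
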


Finally we present the desired Painlev\'e equation for the logarithmic derivative of the Hankel determinant with respect to $k^2$. This equation is not new, but follows from a change of variables and the equation found in \cite{DaiZhang2010}.
\begin{theorem}\label{thm:R:PVI_Rep}
Let  $H_{2n}(k^2)$ and $H_{2n+1}(k^2)$ be defined via the Hankel determinant associated with $w(x,k^2),$ as
\bea
H_{2n}(k^2)&:=&k^2(k^2-1)\frac{d}{dk^2}\log D_{2n}[w(\cdot,k^2)],\\
H_{2n+1}(k^2)&:=&k^2(k^2-1)\frac{d}{dk^2}\log D_{2n+1}[w(\cdot,k^2)].
\eea
The functions $H_{2n}(k^2)$ and $H_{2n+1}(k^2)$ are then expressed in terms of the $\sig$-function of a Painlev\'e VI as follows:
\bea
\label{eq:R:H2n_PVI_Rep}
H_{2n}(k^2)&=&
\sig(k^2,n,-1/2,\al,\bt)+\sig(k^2,n,1/2,\al,\bt)
\nonumber\\&&
+\Big(\frac{\bt^2}{2}+2n\bt+\frac{1}{8}\Big)k^2
-\frac{\bt}{2}(2n+\al+\bt)+n(n+\al),\\
\label{eq:R:H2np1_PVI_Rep}
H_{2n+1}(k^2)&=&
\sig(k^2,n+1,-1/2,\al,\bt)+\sig(k^2,n,1/2,\al,\bt)
\nonumber\\&&
+\Big(\frac{\bt^2}{2}+(2n+1)\bt+\frac{1}{8}\Big)k^2
-\frac{\bt}{2}(2n+1+\al+\bt)+\frac{1}{4}(2n+1)(2n+1+2\al).
%\nonumber\\
\eea
Here the function $\sig(k^2,n,a,b,c),$ as shown in \cite{DaiZhang2010}, satisfies
the Jimbo-Miwa-Okamoto $\sig$-form of a particular Painlev\'e VI  \cite{JimboMiwa1981vII}:
%\begin{footnotesize}
\bea
\sigma^\prime\Big(k^2(k^2-1)\sigma^{\prime\prime}\Big)^2+\Big\{2\sigma^\prime\big(k^2\sigma^\prime-\sigma\big)-
\big(\sigma^\prime\big)^2-\nu_1\nu_2\nu_3\nu_4\Big\}^2=
\prod\limits_{i=1}^{4}\big(\nu_i^2+\sigma^\prime\big),
%\nonumber\\
\eea
%\end{footnotesize}
where ${}^\prime$ denotes derivative with respect to $k^2$ and
\bea
\nu_1=\frac{1}{2}(c-a),\quad\nu_2=\frac{1}{2}(c+a),\quad\nu_3=\frac{1}{2}(2n+a+c),\quad\nu_4=\frac{1}{2}(2n+a+2b+c).
\eea
\end{theorem}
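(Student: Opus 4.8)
The plan is to reduce the even weight $w(x,k^2)=(1-x^2)^{\alpha}(1-k^2x^2)^{\beta}$ on $[-1,1]$ to a pair of weights on $[0,1]$ that each carry a Jacobi factor together with one movable algebraic singularity, for which the $\sigma$-form of Painlev\'e VI is already established in \cite{DaiZhang2010}, and then to reassemble the two pieces. Since $w$ is even, the monic orthogonal polynomials split as $P_{2n}(x)=p_n(x^2)$ and $P_{2n+1}(x)=x\,q_n(x^2)$, where, under $t=x^2$, the families $p_n$ and $q_n$ are monic and orthogonal on $[0,1]$ against
\[
w_1(t)=t^{-1/2}(1-t)^{\alpha}(1-k^2t)^{\beta},\qquad
w_2(t)=t^{1/2}(1-t)^{\alpha}(1-k^2t)^{\beta},
\]
respectively. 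A direct change of variables in the norm integrals gives the exact identities $h_{2j}[w]=\hat h_j[w_1]$ and $h_{2j+1}[w]=\hat h_j[w_2]$, whence, using $D_n=\prod_{j=0}^{n-1}h_j$,
\[
D_{2n}[w]=\hat D_n[w_1]\,\hat D_n[w_2],\qquad
D_{2n+1}[w]=\hat D_{n+1}[w_1]\,\hat D_n[w_2].
\]

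Applying $k^2(k^2-1)\tfrac{d}{dk^2}\log(\cdot)$ and writing $\widehat H_m[w_i]:=k^2(k^2-1)\tfrac{d}{dk^2}\log\hat D_m[w_i]$, these product formulas turn into the additive relations
\[
H_{2n}(k^2)=\widehat H_n[w_1]+\widehat H_n[w_2],\qquad
H_{2n+1}(k^2)=\widehat H_{n+1}[w_1]+\widehat H_n[w_2].
\]
The exponents of $w_1,w_2$ are precisely the triples $(a,b,c)=(\mp\tfrac{1}{2},\alpha,\beta)$ that appear in the statement, and the index shift $n\mapsto n+1$ attached to the $w_1$-contribution is exactly what distinguishes the even and odd cases, reproducing the structural shape of \eqref{eq:R:H2n_PVI_Rep}--\eqref{eq:R:H2np1_PVI_Rep}.

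It then remains to identify each $\widehat H_m[w_i]$ with a Dai--Zhang $\sigma$. Their weight is of the form $t^{a}(1-t)^{b}(s-t)^{c}$ on $[0,1]$, with the four Fuchsian points $\{0,1,s,\infty\}$ producing Painlev\'e VI under deformation of $s$. I would place the movable singularity at $s=1/k^2$ and factor
\[
(1-k^2t)^{\beta}=(k^2)^{\beta}\,(1/k^2-t)^{\beta},
\]
so that $w_i=(k^2)^{\beta}\,\tilde w_i$ with $\tilde w_i=t^{a}(1-t)^{\alpha}(s-t)^{\beta}$ the genuine Dai--Zhang weight. Because multiplying a weight by the constant $(k^2)^{\beta}$ multiplies the $m\times m$ Hankel determinant by $(k^2)^{\beta m}$, one gets $\log\hat D_m[w_i]=m\beta\log k^2+\log\hat D_m[\tilde w_i]$; the first summand contributes the elementary term $m\beta(k^2-1)$ to $\widehat H_m[w_i]$, while the second, via the chain rule $\tfrac{d}{dk^2}=-k^{-4}\tfrac{d}{ds}$, is the Dai--Zhang object. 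Quoting \cite{DaiZhang2010} with parameters $(a,b,c)=(\mp\tfrac{1}{2},\alpha,\beta)$ then gives $\widehat H_m[w_i]=\sigma(k^2,m,a,\alpha,\beta)+(\text{explicit polynomial in }m,\alpha,\beta,k^2)$, where $\sigma$ satisfies the displayed $\sigma$-PVI with the stated $\nu_i$, and summing the two pieces according to the previous display produces \eqref{eq:R:H2n_PVI_Rep}--\eqref{eq:R:H2np1_PVI_Rep}.

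The conceptual content is thus entirely contained in the even/odd factorization together with the citation; the genuine work---and the main obstacle---is the bookkeeping. Concretely one must (i) confirm that the norm identities are exact, so that no spurious $k^2$-dependence is discarded; (ii) reconcile the interval, the sign of the deformation variable, and the placement of the movable singularity at $s=1/k^2$ used in \cite{DaiZhang2010} with our normalization; and (iii) check that the elementary terms generated by the rescaling factor $(k^2)^{\beta m}$ together with the polynomial shift built into Dai--Zhang's definition of $\sigma$ (the normalization that renders the clean Jimbo--Miwa--Okamoto form) combine, after adding the $w_1$- and $w_2$-contributions, to exactly $\bigl(\tfrac{\beta^2}{2}+2n\beta+\tfrac{1}{8}\bigr)k^2-\tfrac{\beta}{2}(2n+\alpha+\beta)+n(n+\alpha)$ in the even case and to its odd-case analogue. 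Once these elementary terms are matched, the Painlev\'e property of each $\sigma(k^2,n,a,b,c)$ follows verbatim from \cite{DaiZhang2010}.
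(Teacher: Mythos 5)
Your proposal follows the paper's own route essentially verbatim: the even/odd quadratic substitution splitting $D_{2n}$ and $D_{2n+1}$ into products of Hankel determinants for the weights $t^{\mp 1/2}(1-t)^{\al}(1-k^2t)^{\bt}$ on $[0,1]$, the resulting additive relations for the logarithmic derivatives, and the appeal to \cite{DaiZhang2010} for the $\sig$-PVI satisfied by each factor. The only cosmetic difference is that the paper quotes the Dai--Zhang result directly for the weight $x^{a}(1-x)^{b}(1-k^2x)^{c}$ with an explicit affine shift $d_1k^2+d_0$ (whose sum over $a=\pm\tfrac12$ yields exactly the stated polynomial terms), rather than re-deriving it via your rescaling to $(1/k^2-t)^{\bt}$; your deferred ``bookkeeping'' in items (i)--(iii) is precisely that verification.
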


\begin{remark} The moments $\mu_j(k^2)$, $j=0,1,2,\dots$, of the weight (\ref{defn:R:w(xk2)})
can be expressed in terms of hypergeometric functions as
\bea\label{eq:R:moments(even)}
\mu_{2j}(\al,\bt,k^2)&=&
\frac{\Gamma(j+1/2)\Gamma(\al+1)}{\Gamma(j+\al+3/2)}% Note that this is the beta function B(j=1/2,alpha+1)
\ {}_2F_1\left(-\bt,j+\frac{1}{2};j+\al+\frac{3}{2};k^2\right),\\
\mu_{2j+1}(\al,\bt,k^2)&=&0,
\eea
where the hypergeometric function ${}_2F_1(a,b;c,z)$ has the following integral representation \cite[p. 558]{AbramowitzStegun}
\bea
{}_2F_1(a,b;c,z)&=&
\frac{\Gamma(c)}{\Gamma(b)\Gamma(c-b)}\int\limits^1_0\! t^{b-1}(1-t)^{c-b-1}(1-tz)^{-a}\,dt,\qquad \Re(c)>\Re(b)>0.
\nonumber
\eea
%
%Writing the moments in terms of the hypergeometric function verifies that the moments do exits, i.e. the integrals in (\ref{eq:R:moments}) remain finite. This can be established since the condition of absolute convergence \cite[p. 556]{AbramowitzStegun} of the hypergeometric series in (\ref{eq:R:moments(even)}) requires that $\al+\bt>-1$ and $k^2\leq1$. By looking at (\ref{defn:R:w(xk2)}), we can see that these conditions are satisfied.
\end{remark}
%\begin{remark}
%For special values of $\al$, $\bt$ and $k^2$, the weight (\ref{defn:w(xk2)}) %reduces to special cases of the Jacobi weight
%\bea
%w^{(\al,\bt)}(x)&=&(1-x)^{\al}(1+x)^{\bt}.
%\eea
%The cases where $(\al,\bt,k^2)=(\al,\bt,0)$, $(\al,\bt,k^2)=(\al,0,k^2)$ %and $(\al,\bt,k^2)=(0,\al,k^2)$ should yield identical reductions to the %moment of the Jacobi weight,
%where we have $\al_J=\bt_J=\al$. From the moment formula (\ref{eq:R:moments(even)}), %we have that
%
%The following three cases of special values for $\al$, $\bt$ and $k^2$ give %us identical results:
%\bea
%\mu_{2j}(\al,\bt,0)=\mu_{2j}(\al,0,k^2)=\mu_{2j}(0,\al,1)&=&\frac{\Gamma(j+1/2)\Gamma(\al+1)}{\Gamma(j+\al+3/2)}.
%&=&\mu_{2j}(\al,0,k^2),\nonumber\\
%&=&\mu_{2j}(0,\al,1).\nonumber
%\eea%
%For the above three cases, we obtain the moments for the special case of %the Jacobi weight, $w^{(\al,\al)}(x)$.
%
%For the cases where $(\al,\bt,k^2)=(\al,\bt,1)$ and $(\al,\bt,k^2)=(\al+\bt,\bt,0)$, %our weight is reduced to the Jacobi weight where $\al_J=\bt_J=\al+\bt$, %and hence the moments reduce to
%
%In the following two cases of special values, we have
%\bea
%\mu_{2j}(\al,\bt,1)=\mu_{2j}(\al+\bt,\bt,0)&=&\frac{\Gamma(j+1/2)\Gamma(\al+\bt+1)}{\Gamma(j+\al+\bt+3/2)}.
%%&=&\mu_{2j}(\al+\bt,\bt,0)
%\eea
%This result is corresponds to the moments of the Jacobi weight,  $w^{(\al+\bt,\al+\bt)}(x)$.
%
%In both these case, the reduced formula for the moments agree with existing %results for the Jacobi weight.
%\end{remark}
\subsection{Ladder operators and supplementary conditions}\label{sec:Ladder}
In the theory of Hermitian
random matrices, orthogonal polynomials play an important role,
since the fundamental object, namely, Hankel determinants or
partition functions, are expressed in terms of the associated square of the $L^2$
norm, $D_n=\prod\limits_{j=0}^{n-1}h_j$.  Moreover, as we have seen in the introduction,
Hankel determinants are intimately related to
the recurrence coefficient $\bt_n$ of the orthogonal
polynomials (for other recent examples, see \cite{BasorChen2009,BasorChenEhrhardt,ChenZhang2010,ForresterOrmerod2010}).
%Mehta2004,MezzadriSnaith2005},

There is a recursive algorithm that facilitates the
determination of the recurrence coefficient $\beta_n$. This is implemented through the use of so-called
``ladder operators'' as well as their associated supplementary conditions.
This approach can be traced back to Laguerre and Shohat \cite{Shohat1939}.
More recently, Magnus \cite{Magnus1995} applied this approach to
semi-classical orthogonal polynomials %associated with random matrix theory
and the derivation of Painlev\'e equations, while
\cite{TracyWidom1999} used the  compatibility conditions
in the study of finite $n$ matrix models. See
%\cite{BasorChenEhrhardt,ChenIsmail2005,ChenIts2009,ChenDai} for
\cite{BasorChenEhrhardt,ChenIsmail2005,ChenIts2009} for other examples.

With the potential, $\textsf{v},$ defined by
\bea\label{Defn:w=e^-vx}
\textsf{v}(x)=-\log w(x),
\eea
a pair of ladder operators---formulae that allow one to raise or lower the index $n$ of the orthogonal polynomial---may be
 derived \cite{ChenIsmail1997,ChenIsmail2005}:
\begin{equation}\label{Defn:LadderOp}\begin{aligned}
\left[\frac{d}{dx}+B_n(x)\right]P_n(x)&=\bt_nA_n(x)P_{n-1}(x),\\
\left[\frac{d}{dx}-B_n(x)-\textsf{v}^\prime(x)\right]P_{n-1}(x)&=-A_{n-1}(x)P_n(x),
\end{aligned}\end{equation}
where
\begin{equation}\label{Defn:A_nB_n}
\begin{aligned}
A_n(x)&=\frac{1}{h_n}\int\limits_{-1}^1\!\frac{\textsf{v}^\prime(x)-\textsf{v}^\prime(y)}{x-y}P_n^2(y)w(y)\,dy,\\
B_n(x)&=\frac{1}{h_{n-1}}\int\limits_{-1}^1\!\frac{\textsf{v}^\prime(x)-\textsf{v}^\prime(y)}{x-y}P_n(y)P_{n-1}(y)w(y)\,dy.
\end{aligned}
\end{equation}
The fundamental compatibility conditions for $A_n(x)$ and $B_n(x)$  are
\begin{equation}\tag{$S_1$}
B_{n+1}(x) + B_n (x) = xA_n(x) - \textsf{v}^\prime(x),
%B_{n+1}(x) + B_n (x) = (x-\alpha_n)A_n(x) - \textsf{v}^\prime(x),
\end{equation}
\begin{equation}\tag{$S_2$}
1+x[B_{n+1}(x)-B_n(x)]=\beta_{n+1}A_{n+1}(x) - \beta_n A_{n-1}(x).
%1+(x-\alpha_n)[B_{n+1}(x)-B_n(x)]=\beta_{n+1}A_{n+1}(x) - \beta_n A_{n-1}(x),
\end{equation}
See \cite{ChenIsmail1997,ChenIsmail2005} for a derivation.
These were initially derived for any polynomial $\textsf{v}(x)$ (see
\cite{Bauldry1990,BonanClark1990,Mhaskar1990}), and then were shown
to hold for all $x\in\mathbb{C}\cup\{\infty\}$ in greater generality
\cite{ChenIsmail2005}.

We now combine $(S_1)$ and $(S_2)$ as follows.  First, multiply
$(S_2)$ by $A_n(x)$, then it is seen that the right side is a first order
difference, while $xA_n(x)$ on the left side is replaced by
$B_{n+1}(x)+B_n(x)+\textsf{v}^\prime(x)$ taking into account $(S_1)$. Now, taking
a telescopic sum with initial conditions
\begin{equation*}
B_0(x)=A_{-1}(x):=0,
\end{equation*}
leads to the important identity
\begin{equation}\tag{$S_2^{\ \prime}$}
\sum\limits_{j=0}^{n-1}\!A_j(x)\ + \ B_n^{\ 2}(x) +\textsf{v}^\prime
(x)B_{n}(x) =  \beta_n A_n(x)A_{n-1}(x).
\end{equation}
The condition $(S_2')$ is of considerable interest, since it is
intimately related to the logarithm of the Hankel determinant. In
order to gain further insight into the determinant, we need to
find a way to reduce the sum to a fixed number of quantities, known as the auxiliary variables
(to be introduced in the next section). The equation $(S_2^\prime)$ ultimately paves a way going forward.

%Furthermore, eliminating $P_{n-1}(x)$ from (\ref{Defn:LadderOp}), it is easy to show that $P_n(x)$
%satisfies the second order linear ode,
%\bea
%P_n^{\prime\prime}(x)
%-\left(\textsf{v}^\prime(x)+\frac{A_n^\prime(x)}{A_n(x)}\right)P_n^\prime(x)
%+\left(B_n^\prime(x)-B_n(x)\frac{A_n^\prime(x)}{A_n(x)}+\sum\limits_{j=0}^{n-1}A_j(x)\right)P_n(x)=0.
%\eea
%This equation can be found in \cite{ChenIts2009}, (cite more) and in \cite{Shohat1939}, albeit in a different form.

%These compatibility conditions were initially derived for polynomial
%$\textsf{v}(x)$, %\cite{Bauldry1990,BonanClark1990,Mhaskar1990},
%and then were shown to hold %for all $x\in\mathbb{C}\cup\{\infty\}$ in
%the case of general $\textsf{v}^\prime(x)=-w^\prime(x)/w(x)$,
%\cite{ChenIsmail1997,ChenIsmail2005}. From this, it can be seen
%that the %classical orthogonal polynomials of Hermite $(w(x)=e^{-x^2})$, Laguerre %$(w(x)=x^\al e^{-x})$ and
%Jacobi $(w(x)=(1-x)^\al(1+x)^\bt)$ all satisfy %the above ladder operator relations.
%%
\section{Computation of auxiliary variables}\label{Sec:R:LadOpAuxVar}

Starting with our weight function (\ref{defn:R:w(xk2)}),
\bea\label{eq:R:w(x,k2)}
w(x,k^2)&=&(1-x^2)^{\alpha}(1-k^2x^2)^{\beta}, \qquad x\in[-1,1],\;\;\al>-1,\;\;\bt\in\mathbb{R},\;\; k^2\in(0,1),
\eea
%from (\ref{Defn:w=e^-vx}),
we see that
\bea
\label{eq:R:v(x)}\textsf{v}(x)&=&-\log w(x,k^2)= -\alpha\log(1-x^2)-\bt\log(1-k^2x^2),
\eea
and
\bea
\label{eq:R:vp(x)}\textsf{v}^\prime(x)&=& -\alpha\left(\frac{1}{x-1}+\frac{1}{x+1}\right)-
\beta\left(\frac{1}{x-1/k}+\frac{1}{x+1/k}\right).
\eea
\begin{remark}
Since $\textsf{v}^\prime(x)$ is a rational function of $x$,
\begin{small}
\bea
\label{eq:R:vpx-vpy}
\frac{\textsf{v}^\prime(x) -\textsf{v}^\prime(y)}{x-y}
=\frac{\alpha}{x-1}\cdot\frac{1}{y-1}+\frac{\alpha}{x+1}\cdot\frac{1}{y+1}
+\frac{\beta}{x-1/k}\cdot\frac{1}{y-1/k}+\frac{\beta}{x+1/k}\cdot\frac{1}{y+1/k}
\eea
\end{small}\noindent
is also a rational function of $x,$ and $y,$ which in turn implies that
$A_n(x)$ and $B_n(x)$ are rational functions of~$x$. Consequently,
equating the residues of the simple and double pole at  $x=\pm1$, and $x=\pm1/k$, and
the limit at $x\to\infty$ on both sides of the supplementary conditions
$(S_1)$, $(S_2)$ and $(S_2^\prime)$, we obtain non-linear difference equations in $n$ satisfied
by the auxiliary variables.  These equations are likely very complicated, but the
main idea is to express the recurrence coefficient $\bt_n$ in terms of these auxiliary variables, and eventually take
advantage of the product representation $D_n=\prod\limits_{j=0}^{n-1}h_j$  to obtain
equations satisfied by the logarithmic derivative of the Hankel determinant.
\end{remark}
Using $P_n(-y)=(-1)^nP_n(y),$ we may deduce that 
\bea
\label{eq:R:Sym_rel_1}
\int\limits^1_{-1}\!\frac{P_n^2(y)w(y)}{c-y}\,dy
&=&\int\limits^1_{-1}\!\frac{P_n^2(y)w(y)}{c+y}\,dy
,\\
%\label{eq:R:Sym_rel_2}
%\int\limits^1_{-1}\!\frac{w(y)P_n^2(y)}{1-y}dy&=&
%\int\limits^1_{-1}\!\frac{w(y)P_n^2(y)}{1+y}dy,\\
\label{eq:R:Sym_rel_3}
\int\limits^1_{-1}\!\frac{P_n(y)P_{n-1}(y)w(y)}{c-y}\,dy&=&
-\int\limits^1_{-1}\!\frac{P_n(y)P_{n-1}(y)w(y)}{c+y}\,dy,
%\\
%\label{eq:R:Sym_rel_4}
%\int\limits^1_{-1}\!\frac{P_n(y)P_{n-1}(y)w(y)}{1-y}dy&=&
%-\int\limits^1_{-1}\!\frac{P_n(y)P_{n-1}(y)w(y)}{1+y}dy,
\eea
where $c\in\mathbb{R}$ is fixed. From the above elementary relations, 
and
(\ref{Defn:A_nB_n}), we find
\bea
\label{eq:R:An}A_n(x)&=&-\frac{R_n}{x-1}+\frac{R_n}{x+1}-\frac{R_n^\ast}{x-1/k}+\frac{R_n^\ast}{x+1/k},\\
\label{eq:R:Bn}B_n(x)&=&\frac{r_n}{x-1}+\frac{r_n}{x+1}+\frac{r_n^\ast}{x-1/k}+\frac{r_n^\ast}{x+1/k},
\eea
where
\begin{equation}\label{eq:R:AuxVar}\begin{aligned}
R_n^\ast(k^2):=&\frac{\beta}{h_{n}}\int\limits^1_{-1}\!\frac{w(y)P_n^2(y)}{1/k+y}\,dy,&&&&&&&&
r_n^\ast(k^2):=&\frac{\beta}{h_{n-1}}\int\limits^1_{-1}\!\frac{w(y)P_n(y)P_{n-1}(y)}{1/k+y}\,dy,\\
R_n(k^2):=&\frac{\alpha}{h_{n}}\int\limits^1_{-1}\!\frac{w(y)P_n^2(y)}{1+y}\,dy,&&&&&&&&
r_n(k^2):=&\frac{\alpha}{h_{n-1}}\int\limits^1_{-1}\!\frac{w(y)P_n(y)P_{n-1}(y)}{1+y}\,dy,
\end{aligned}\end{equation}
are the $4$ auxiliary variables.
%
%We substitute (\ref{eq:R:vpx-vpy}) into (\ref{Defn:A_nB_n}) which define $A_n(x)$
%and $B_n(x)$, followed by integration by parts.

%The above relations (\ref{eq:R:Sym_rel_1})-(\ref{eq:R:Sym_rel_3}) may be derived by making the transformation $y\to-y$ on the left-hand side, and using the fact that the weight $w(x)$ is even, implying that $w(x)=w(-x)$ and $P_n(x)=(-1)^nP_n(-x)$.
\begin{remark}
Take $n=0$. From (\ref{eq:R:moments(even)}) and the definitions of $R_0(k^2)$, $R_0^\ast(k^2)$, $r_0(k^2)$ and
$r_0^\ast(k^2)$, it follows that
\bea
\label{eq:R:IC:R0}R_0(k^2)&=&\frac{\left(\al+\frac{1}{2}\right){}_2F_1\left(-\bt,\frac{1}{2};\al+\frac{1}{2};k^2\right)}{{}_2F_1\left(-\bt,\frac{1}{2};\al+\frac{3}{2};k^2\right)},\\
\label{eq:R:IC:R0ast}\frac{R_0^\ast(k^2)}{k}&=&\frac{\bt{}_2F_1\left(-\bt+1,\frac{1}{2};\al+\frac{3}{2};k^2\right)}{{}_2F_1\left(-\bt,\frac{1}{2};\al+\frac{3}{2};k^2\right)},\\
\label{eq:R:IC:r0r0ast}r_0(k^2)&=&r_0^\ast(k^2)=0.
\eea
Thus, $(S_1)$ and $(S_2)$ determine the sequence $A_{n}(x)$ and $B_{n}(x)$ given initial conditions $B_{0}(x)=A_{-1}(x)=0$ and the value of $A_{0}(x)$ from above.
\end{remark}
%Now from performing integration by parts on either $r_n$ or $r_n^\ast$, %we get the supplementary condition
%\bea
%-2r_n&=&2r_n^\ast+n.
%\eea
\subsection{Difference equations}
Inserting $A_n(x)$ and $B_n(x)$ into
$(S_1)$, $(S_2)$ and $(S_2^\prime)$,
and equating the residues yields a system of $17$~equations. However, there are multiple instances of the same equation, and thus we actually have a system of $9$ distinct equations.

From $(S_1)$, equating the residues at $x=\pm1$ and $x=\pm1/k$ give the following set of equations:
\bea
\label{sys:R:MCS1.1}r_n +r_{n+1}&=&\alpha-R_n,\\
\label{sys:R:MCS1.2} r_n^\ast+r_{n+1}^\ast&=&\beta-R_n^\ast/k.
\eea
Similarly, from $(S_2)$, the limit at $x\to\infty$, and equating the residues at $x=\pm1$ and $x=\pm1/k$ give the
following set of distinct equations:
\bea
\label{sys:R:MCS2.0}1+2r_{n+1}^\ast-2r_n^\ast+2r_{n+1}-2r_n&=&0,\\
\label{sys:R:MCS2.1} r_{n}-r_{n+1} &=&\beta_{n+1}R_{n+1}-\beta_{n}R_{n-1},\\
\label{sys:R:MCS2.2}(r_{n}^\ast-r_{n+1}^\ast)/k&=&\beta_{n+1}R_{n+1}^\ast-\beta_nR_{n-1}^\ast.
\eea
Going through the steps with $(S_2^\prime)$ is more complicated, but
equating all respective residues in $(S_2^\prime)$ yields four equations.
The first two  are obtained by equating the residues of the double
pole at $x=\pm1$, and $x=\pm1/k$ respectively:
\bea
\label{sys:R:MCS2p1.1}
r_n(r_n-\al)&=&
\bt_nR_nR_{n-1},\\
\label{sys:R:MCS2p1.2} r_n^\ast(r_n^\ast-\bt)&=&\beta_nR_n^\ast R_{n-1}^\ast,
\eea
while
the last two distinct equations are obtained by equating the residues of the simple pole at
$x=\pm1$ and $x=\pm1/k$ respectively:
\bea\label{sys:R:MCS2p2.1}
\frac{1}{2}\sum\limits_{j=0}^{n-1}R_j -\frac{k^2}{k^2-1}
\bigg(2r_n^\ast r_n-\al r_n^\ast-\bt r_n\bigg)
&=&
\beta_n R_nR_{n-1}-\frac{k\bt_n}{k^2-1}
\bigg(R_nR_{n-1}^\ast+ R_n^\ast R_{n-1}\bigg),\\*
\label{sys:R:MCS2p2.2}
\frac{1}{2}\sum\limits_{j=0}^{n-1}R_j^\ast +\frac{k}{k^2-1}
\bigg(2r_n^\ast r_n-\al r_n^\ast-\bt r_n\bigg)
&=&
k\beta_nR_n^\ast R_{n-1}^\ast +
\frac{k^2}{k^2-1}\beta_n\bigg( R_nR_{n-1}^\ast+ R_n^\ast R_{n-1}\bigg).
\eea
\begin{remark}
Since $(S_2^\prime)$ is obtained from $(S_1)$ and $(S_2)$, we can expect that (\ref{sys:R:MCS2p1.1})--(\ref{sys:R:MCS2p2.2}) can be derived from (\ref{sys:R:MCS1.1})--(\ref{sys:R:MCS2.2}) together with initial conditions. For example, (\ref{sys:R:MCS2p1.1}) can be obtained by multiplying (\ref{sys:R:MCS1.1}) and (\ref{sys:R:MCS2.1}), and then taking a telescopic sum of the resultant equation from $j=0$ to $j=n-1$ along with initial conditions. However, this approach can be somewhat unsystematic, compared to the more direct approach of using $(S_2^\prime)$.
\end{remark}
\subsection{Analysis of non-linear system}
While the difference equations
(\ref{sys:R:MCS1.1})--(\ref{sys:R:MCS2p2.2}) look rather complicated, our
aim is to manipulate these equations in such a way to give us
insight into the recurrence coefficient $\bt_n.$  Our
dominant strategy is to always try to describe the recurrence
coefficient $\bt_n$ in terms of the auxiliary variables
$R_n$, $r_n$, $R_n^\ast$ and $r_n^\ast$.

Taking a telescopic sum of (\ref{sys:R:MCS2.0}) from $j=0$ to $j=n-1$  while noting the
initial conditions $r_0=0$ and $r_0^\ast=0,$ %specified by (\ref{eq:R:IC:r0r0ast})
we obtain,
\bea\label{eq:R:rnrnast}
r_n^\ast+r_n&=&-\frac{n}{2}.
\eea
Note that the above can also be obtained by
integration by parts on the definition of $r_n$ or $r_n^\ast$, (\ref{eq:R:AuxVar}).

From $(S_1)$, the sum of (\ref{sys:R:MCS1.1}) and (\ref{sys:R:MCS1.2}), then gives
\bea
\label{eq:R:RnRnastv1} \frac{R_n^\ast}{k}+R_n&=&\al+\bt+n+\frac{1}{2}.
\eea
Now summing the above from $j=0$ to $j=n-1$, gives
\bea
\label{eq:R:SumRjRjast}\sum\limits_{j=0}^{n-1}\!\left(\frac{R_j^\ast}{k}+R_j\right)=n\left(\alpha+\beta+\frac{n}{2}\right).
\eea

We now turn to $(S_2)$. The sum of (\ref{sys:R:MCS2.1}) and (\ref{sys:R:MCS2.2}), and
with (\ref{eq:R:RnRnastv1}) and (\ref{eq:R:rnrnast}) to eliminate $R_n^\ast$ and $r_n^\ast$ respectively, gives us,
%\begin{small}
\bea
\label{eq:R:S2Combined}(k^2-1)(r_n-r_{n+1})+\frac{1}{2}&=&k^2\left(\al+\bt+n+\frac{3}{2}\right)\beta_{n+1}
-k^2\left(\al+\bt+n-\frac{1}{2}\right)\beta_n.
\eea
%\end{small}
Taking a telescopic sum of (\ref{eq:R:S2Combined}) from $j=0$ to $j=n-1$ and recalling
$\P_1(n)=-\sum\limits_{j=0}^{n-1}\bt_j$ and $\bt_0=0$  yields
\bea
\label{eq:R:p1n(rnbtn)}(k^2-1)r_n&=&\frac{n}{2}-k^2\left(\al+\bt+n+\frac{1}{2}\right)\beta_n+k^2\textsf{p}_1(n).
\eea
%For the sake of brevity in the following sections, define a constant term
%for fixed $n$, $c_n$, as \bea\boxed{c_n=\al+\bt+n+\frac{1}{2}},\eea where
%we extend $n$ from $n\in\mathbb{Z}$ to $n\in \mathbb{R}$ to allow for more %compact notations in the equations that follow.
We are now in a position to derive an important lemma, which expresses the recurrence
coefficients $\bt_n$ in terms of
$r_n$ and $R_n$.
\begin{lemma}
The quantity $\bt_n$ can be expressed in terms of the auxiliary variables $r_n$ and $R_n$ as
\begin{small}
\bea
\label{eq:R:btn(Rnrn)}k^2\beta_n&=&
\Bigg\{\Bigg[1+k^2\left(\frac{\al+\bt+n+\frac{1}{2}}{R_n}-1\right)\Bigg]r_n^{\ 2}+
\Bigg[n+\beta - \alpha k^2\left(\frac{\al+\bt+n+\frac{1}{2}}{R_n}-1\right)\Bigg]r_n+
\frac{n}{2}\left(\frac{n}{2}+\beta\right)\Bigg\}\nonumber\\
&&\times\frac{1}{\Big(\al+\bt+n-\frac{1}{2}\Big)\Big(\al+\bt+n+\frac{1}{2}-R_n\Big)}.
\eea
\end{small}
\end{lemma}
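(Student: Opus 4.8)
The plan is to treat (\ref{sys:R:MCS2p1.1}) and (\ref{sys:R:MCS2p1.2})---the two conditions coming from the double poles of $(S_2^\prime)$ at $x=\pm1$ and $x=\pm1/k$---as the engine of the proof, since these are the only relations in which $\bt_n$ appears multiplied by a product of auxiliary variables. The key observation is that each of them isolates one ``shifted'' auxiliary variable: (\ref{sys:R:MCS2p1.1}) gives $R_{n-1}=r_n(r_n-\al)/(\bt_n R_n)$ and (\ref{sys:R:MCS2p1.2}) gives $R_{n-1}^\ast=r_n^\ast(r_n^\ast-\bt)/(\bt_n R_n^\ast)$. Thus the two expressions carry all of the $\bt_n$ dependence, and the problem reduces to finding one scalar relation that couples $R_{n-1}$ and $R_{n-1}^\ast$ without introducing any further unknowns.

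That coupling is supplied by the linear relation (\ref{eq:R:RnRnastv1}) evaluated at index $n-1$, namely $R_{n-1}^\ast/k + R_{n-1} = \al+\bt+n-\tfrac12$. First I would substitute the two expressions for $R_{n-1}$ and $R_{n-1}^\ast$ into this relation and multiply through by $\bt_n$; this already produces
\[
\Big(\al+\bt+n-\tfrac12\Big)\bt_n
=\frac{r_n(r_n-\al)}{R_n}+\frac{r_n^\ast(r_n^\ast-\bt)}{k\,R_n^\ast},
\]
in which every $(n-1)$-indexed quantity has disappeared. The remaining task is purely to rewrite the right-hand side in terms of $r_n$ and $R_n$ alone.

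To finish, I would eliminate the starred level-$n$ variables using the two already-established identities: (\ref{eq:R:RnRnastv1}) in the form $R_n^\ast = k\big(\al+\bt+n+\tfrac12-R_n\big)$, and (\ref{eq:R:rnrnast}) in the form $r_n^\ast=-\tfrac{n}{2}-r_n$. Substituting the first turns the factor $k\,R_n^\ast$ into $k^2\big(\al+\bt+n+\tfrac12-R_n\big)$; multiplying the whole equation through by $k^2\big(\al+\bt+n+\tfrac12-R_n\big)$ then produces the left-hand side $k^2\big(\al+\bt+n-\tfrac12\big)\big(\al+\bt+n+\tfrac12-R_n\big)\bt_n$, exactly matching the stated denominator. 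Substituting the second identity and expanding $r_n^\ast(r_n^\ast-\bt)=\big(\tfrac{n}{2}+r_n\big)\big(\tfrac{n}{2}+r_n+\bt\big)$ as a quadratic in $r_n$, then collecting the $r_n^2$, $r_n$ and constant coefficients, should reproduce the three bracketed coefficients in the numerator, with the combination $\big(\al+\bt+n+\tfrac12\big)/R_n-1$ emerging naturally from $\big(\al+\bt+n+\tfrac12-R_n\big)/R_n$.

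I expect the only genuine obstacle to be conceptual rather than computational: recognizing that (\ref{eq:R:RnRnastv1}) at the shifted index $n-1$ is precisely the ``gluing'' identity needed to collapse the two product relations (\ref{sys:R:MCS2p1.1})--(\ref{sys:R:MCS2p1.2}) into a single equation for $\bt_n$, rather than reaching for the more elaborate simple-pole conditions (\ref{sys:R:MCS2p2.1})--(\ref{sys:R:MCS2p2.2}) (whose natural symmetric combination, one finds, reduces to an identity already implied by (\ref{eq:R:rnrnast}) and (\ref{eq:R:SumRjRjast}), and hence yields nothing new). Once this is seen, the algebra is routine, the only care being to keep $\bt_n$, $R_n$ and $R_n^\ast$ nonzero so that the divisions are legitimate.
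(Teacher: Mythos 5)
Your proposal is correct and uses exactly the same ingredients as the paper's proof --- the two double-pole relations (\ref{sys:R:MCS2p1.1})--(\ref{sys:R:MCS2p1.2}), the linear relation (\ref{eq:R:RnRnastv1}) at indices $n$ and $n-1$, and (\ref{eq:R:rnrnast}) --- combined in an algebraically equivalent way; the paper merely orders the substitutions differently, first reducing (\ref{sys:R:MCS2p1.2}) to $k^2\bt_n\big(\al+\bt+n+\tfrac12-R_n\big)\big(\al+\bt+n-\tfrac12-R_{n-1}\big)=\big(r_n+\tfrac{n}{2}\big)\big(r_n+\tfrac{n}{2}+\bt\big)$ and then eliminating $R_{n-1}$ via (\ref{sys:R:MCS2p1.1}). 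The resulting computation is identical, so no substantive difference.
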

\begin{proof}
Eliminating $R_n^\ast$ and $r_n^\ast$ from
(\ref{sys:R:MCS2p1.2}) using (\ref{eq:R:RnRnastv1}) and (\ref{eq:R:rnrnast}) respectively leads to
\bea\label{eq:R:btn(cn-Rn)(cnm1-Rnm1)}
k^2\bt_n\Big(\al+\bt+n+\frac{1}{2}-R_n\Big)\Big(\al+\bt+n-\frac{1}{2}-R_{n-1}\Big)
&=&\left(r_n+\frac{n}{2}\right)\left(r_n+\frac{n}{2}+\bt\right).
\eea
The result then follows from eliminating $R_{n-1}$ using (\ref{sys:R:MCS2p1.1}).
\end{proof}
Finally, we give an important identity expressing the sum $\sum\limits_{j=0}^{n-1} R_j$
in terms of $\bt_n$ and $r_n$, presented %in (\ref{lem:R:SumR(btnrn)})
below.

First note the following lemma:
\begin{lemma}\label{lem:R:cnRn-1_cn-1Rn(btnrn)}
%We eliminate $R_n^{\ast}$ and $r_n^\ast$ in (\ref{sys:R:MCS2p1.2}) to obtain the following equation:
\bea\label{eq:R:cnRn-1_cn-1Rn(btnrn)}
(k^2-1)r_n^2-r_n(n+\bt+\al k^2)-\frac{n}{2}\left(\frac{n}{2}+\bt\right)
&=&k^2\bt_n\Bigg[\left(\al+\bt+n+\frac{1}{2}\right)R_{n-1}+\left(\al+\bt+n-\frac{1}{2}\right)R_n
\nonumber\\&&
-\left(\al+\bt+n+\frac{1}{2}\right)\left(\al+\bt+n-\frac{1}{2}\right)\Bigg].
\eea
\end{lemma}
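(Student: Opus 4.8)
The plan is to obtain the identity directly from equation (\ref{eq:R:btn(cn-Rn)(cnm1-Rnm1)}) of the previous lemma by expanding its left-hand side and then eliminating the resulting nonlinear cross-term in the auxiliary variables by means of the double-pole residue condition (\ref{sys:R:MCS2p1.1}). To lighten the notation I would temporarily abbreviate the combinations $\al+\bt+n+\frac{1}{2}$ and $\al+\bt+n-\frac{1}{2}$.

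First I would expand the product
$$\Big(\al+\bt+n+\tfrac{1}{2}-R_n\Big)\Big(\al+\bt+n-\tfrac{1}{2}-R_{n-1}\Big)$$
into the constant product, the two linear terms $-\big(\al+\bt+n+\frac{1}{2}\big)R_{n-1}$ and $-\big(\al+\bt+n-\frac{1}{2}\big)R_n$, and the nonlinear cross-term $R_nR_{n-1}$. Multiplied by $k^2\bt_n$, the first three assemble (up to an overall sign, after transposition) into exactly the bracketed expression on the right-hand side of the target identity, so the only obstruction is the cross-term.

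The key step is to dispose of $k^2\bt_nR_nR_{n-1}$. For this I would invoke (\ref{sys:R:MCS2p1.1}), namely $r_n(r_n-\al)=\bt_nR_nR_{n-1}$, substituting $k^2\bt_nR_nR_{n-1}=k^2r_n(r_n-\al)$. Transposing this term to the side carrying $\big(r_n+\frac{n}{2}\big)\big(r_n+\frac{n}{2}+\bt\big)$ leaves the bracketed $R_n$, $R_{n-1}$ combination equated to a polynomial in $r_n$ alone.

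Finally I would expand $k^2r_n(r_n-\al)-\big(r_n+\frac{n}{2}\big)\big(r_n+\frac{n}{2}+\bt\big)$ and collect powers of $r_n$: the coefficient of $r_n^2$ is $k^2-1$, that of $r_n$ is $-(n+\bt+\al k^2)$, and the constant term is $-\frac{n}{2}\big(\frac{n}{2}+\bt\big)$, which reproduces the left-hand side of the lemma verbatim. There is no genuine difficulty beyond bookkeeping; the single point of insight is recognizing that (\ref{sys:R:MCS2p1.1}) is precisely the relation converting the quadratic term $R_nR_{n-1}$ into a polynomial in $r_n$, which is what turns (\ref{eq:R:btn(cn-Rn)(cnm1-Rnm1)}) into the stated identity.
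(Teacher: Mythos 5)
Your proposal is correct and follows exactly the paper's own (very terse) proof: expand the product in (\ref{eq:R:btn(cn-Rn)(cnm1-Rnm1)}), replace the cross-term $k^2\bt_nR_nR_{n-1}$ by $k^2r_n(r_n-\al)$ via (\ref{sys:R:MCS2p1.1}), and collect powers of $r_n$. The bookkeeping you describe checks out, so this is the same argument with the details filled in.
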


\begin{proof}
This is obtained from (\ref{eq:R:btn(cn-Rn)(cnm1-Rnm1)}) and noting that
$\bt_nR_nR_{n-1}=r_n(r_n-\al)$.  %Further simplification equation (\ref{eq:R:cnRn-1_cn-1Rn(btnrn)}).
\end{proof}

\begin{theorem}\label{lem:R:SumR(btnrn)}
The sum $\sum\limits_{j=0}^{n-1} R_j$ may be expressed in terms of $\bt_n$ and $r_n$ as
\bea\label{eq:R:SumR(btnrn)}
\frac{1}{2}\sum_{j=0}^{n-1}R_j
&=&-r_n(\alpha+\beta+n)+\frac{n}{2}\left(\frac{n}{2}+\beta+\alpha k^2\right)\frac{1}{k^2-1} %\nonumber\\&&
-\frac{k^2}{k^2-1}\left(\al+\bt+n+\frac{1}{2}\right)\left(\al+\bt+n-\frac{1}{2}\right)\bt_n.
\nonumber\\
\eea
\end{theorem}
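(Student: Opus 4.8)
The plan is to begin from the supplementary-condition equation (\ref{sys:R:MCS2p2.1}), which already isolates $\tfrac{1}{2}\sum_{j=0}^{n-1}R_j$ on its left-hand side, and then to eliminate every auxiliary variable except $r_n$ and $\bt_n$. First I would invoke the telescoped relation (\ref{eq:R:rnrnast}) in the form $r_n^\ast=-\tfrac{n}{2}-r_n$ to remove $r_n^\ast$ from the bracket $2r_n^\ast r_n-\al r_n^\ast-\bt r_n$, turning it into a quadratic in $r_n$. Next I would use (\ref{eq:R:RnRnastv1}), namely $R_n^\ast=k\bigl(\al+\bt+n+\tfrac12-R_n\bigr)$ and the shifted version $R_{n-1}^\ast=k\bigl(\al+\bt+n-\tfrac12-R_{n-1}\bigr)$, to replace the starred $R$'s on the right. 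After this substitution the cross term $R_nR_{n-1}^\ast+R_n^\ast R_{n-1}$ collapses to $k$ times the combination $\bigl(\al+\bt+n-\tfrac12\bigr)R_n+\bigl(\al+\bt+n+\tfrac12\bigr)R_{n-1}-2R_nR_{n-1}$.

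At this stage the two remaining structures are precisely those already converted to $r_n,\bt_n$ earlier: the product $R_nR_{n-1}$ is replaced via (\ref{sys:R:MCS2p1.1}) by $r_n(r_n-\al)/\bt_n$, while the weighted sum $\bigl(\al+\bt+n+\tfrac12\bigr)R_{n-1}+\bigl(\al+\bt+n-\tfrac12\bigr)R_n$ is supplied directly by Lemma \ref{lem:R:cnRn-1_cn-1Rn(btnrn)}. Substituting both makes every occurrence of $R_n,R_{n-1},R_n^\ast,R_{n-1}^\ast,r_n^\ast$ disappear, leaving an expression purely in $r_n$, $\bt_n$, and the parameters. The final step is to group by powers of $r_n$. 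The crucial and only delicate point is that the $r_n^2$ contributions cancel: the $+r_n^2$ from $\bt_nR_nR_{n-1}=r_n(r_n-\al)$ is offset by the $-r_n^2$ produced inside the cross term through the $(k^2-1)r_n^2$ piece of Lemma \ref{lem:R:cnRn-1_cn-1Rn(btnrn)}, together with the quadratic coming from $2r_n^\ast r_n-\al r_n^\ast-\bt r_n$, and the $k^2/(k^2-1)$-weighted copies likewise cancel in pairs.

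Once that cancellation is confirmed, the linear-in-$r_n$ coefficient simplifies dramatically: after clearing the common factor $k^2-1$ the $\al k^2$ terms vanish identically and the numerator factors as $-(k^2-1)(\al+\bt+n)$, giving $-r_n(\al+\bt+n)$. The $\bt_n$ coefficient is read off immediately as $-\tfrac{k^2}{k^2-1}\bigl(\al+\bt+n+\tfrac12\bigr)\bigl(\al+\bt+n-\tfrac12\bigr)$, and the remaining constants combine into $\tfrac{n}{2(k^2-1)}\bigl(\tfrac{n}{2}+\bt+\al k^2\bigr)$, reproducing (\ref{eq:R:SumR(btnrn)}) exactly. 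I expect the bookkeeping of the $(k^2-1)$ denominators, and verifying the $r_n^2$ cancellation, to be the main obstacle; everything else is a direct substitution of results already established above.
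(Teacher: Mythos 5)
Your proposal follows exactly the paper's route: start from $(S_2')$-residue equation (\ref{sys:R:MCS2p2.1}), eliminate $r_n^\ast$, $R_n^\ast$, $R_{n-1}^\ast$ via (\ref{eq:R:rnrnast}) and (\ref{eq:R:RnRnastv1}), use $\bt_nR_nR_{n-1}=r_n(r_n-\al)$ from (\ref{sys:R:MCS2p1.1}), and finish by substituting Lemma~\ref{lem:R:cnRn-1_cn-1Rn(btnrn)} for the weighted sum $\bigl(\al+\bt+n+\tfrac12\bigr)R_{n-1}+\bigl(\al+\bt+n-\tfrac12\bigr)R_n$. The cancellation of the $r_n^2$ terms that you flag as the delicate point does indeed occur (the paper's intermediate expression carries a $+r_n^2$ which is killed by the $(k^2-1)r_n^2/(k^2-1)$ piece of the lemma), so the argument is correct and essentially identical to the paper's.
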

\begin{proof}
From equation (\ref{sys:R:MCS2p2.1}), we first eliminate $R_n^\ast$ and $R_{n-1}^\ast$ in favor of
$R_n$ and $R_{n-1}$ using (\ref{eq:R:RnRnastv1}), and eliminate $r_n^\ast$ in favor of $r_n$ using
(\ref{eq:R:rnrnast}). With  $\bt_nR_nR_{n-1}=r_n(r_n-\al),$ %using (\ref{sys:R:MCS2p1.1})
we arrive at
\begin{small}
\bea
\frac{1}{2}\sum\limits_{j=0}^{n-1}\!R_j
&=&
r_n^{ 2}-r_n\left(\alpha+\frac{(\alpha+\beta+n)k^2}{k^2-1}\right)
+\frac{n\al k^2}{2(k^2-1)}
-\frac{k^2}{k^2-1}\beta_n\Bigg[\left(\al+\bt+n+\frac{1}{2}\right)R_{n-1}+\left(\al+\bt+n-\frac{1}{2}\right)R_n\Bigg].
\nonumber\\
\eea
\end{small}\noindent
Finally, we use equation (\ref{eq:R:cnRn-1_cn-1Rn(btnrn)}), %established in (\ref{cor:R:cnRn-1_cn-1Rn(btnrn)}),
to eliminate $$\left(\al+\bt+n+\frac{1}{2}\right)R_{n-1} + \left(\al+\bt+n-\frac{1}{2}\right)R_{n}$$ from the above
equation to arrive at our result.
\end{proof}

\section{Non-linear difference equation for \texorpdfstring{$\bt_n$}{recurrence coefficient}}\label{Sec:R:DiffEqnProofs}
\subsection{Proof of Theorem \ref{thm:R:p1(n)^2Eqn}}
%From the difference equations obtained from the compatibility conditions,
We now prove Theorem \ref{thm:R:p1(n)^2Eqn}.

\begin{proof}
Eliminating $\bt_nR_nR_{n-1}$ from equation (\ref{eq:R:btn(cn-Rn)(cnm1-Rnm1)}) using (\ref{sys:R:MCS2p1.1}),
 leads to the following:
\bea\label{eq:R:Rec_Rn_Rnm1_rn}
k^2\bt_n\bigg[\Big(\al+\bt+n+\frac{1}{2}\Big)R_{n-1}+\Big(\al+\bt+n-\frac{1}{2}\Big)R_n\bigg]&=&k^2\Big(\al+\bt+n+\frac{1}{2}\Big)
\Big(\al+\bt+n-\frac{1}{2}\Big)\bt_n\nonumber\\
&&+k^2r_n(r_n-\al)-\left(r_n+\frac{n}{2}\right)\left(r_n+\frac{n}{2}+\bt\right).\nonumber\\
\eea
Our aim is to replace $R_n$, $R_{n-1}$ and $r_n$ in (\ref{eq:R:Rec_Rn_Rnm1_rn}) with $\bt_n$ and $\P_1(n)$.
%To eliminate $r_n$,
We rearrange (\ref{eq:R:p1n(rnbtn)}) to express $r_n$ in terms of $\bt_n$ and $\P_1(n)$ as
\bea
\label{eq:R:rn(bti)}r_n&=&\frac{1}{k^2-1}\left[\frac{n}{2}-k^2\Big(\al+\bt+n+\frac{1}{2}\Big)\beta_n+k^2\textsf{p}_1(n)\right].
\eea
Replacing $n$ by $n-1$ and $n$ by $n+1$, in (\ref{eq:R:p1n(rnbtn)}), we find,
\bea
\label{eq:R:rnm1(bti)}r_{n-1}&=&\frac{1}{k^2-1}\left[\frac{n-1}{2}-k^2\Big(\al+\bt+n-\frac{3}{2}\Big)\beta_{n-1}+k^2\P_1(n)\right],
\eea
and
\bea
\label{eq:R:rnp1(bti)}r_{n+1}&=&
\frac{1}{k^2-1}\left[\frac{n+1}{2}-k^2\Big(\al+\bt+n+\frac{3}{2}\Big)\beta_{n+1}+k^2\big(\P_1(n)-\bt_n\big)\right],
\eea
respectively,
and we have used $\P_1(n-1)=\bt_{n-1}+\P_1(n)$ and $\P_1(n+1)=\P_1(n)-\bt_n$ to bring the right-hand sides into the above final
form.
%(\ref{def:R:btn(p1n)}), we replace $\P_1(n-1)$
%in (\ref{eq:R:rnm1(bti)}) and $\P_1(n+1)$ in (\ref{eq:R:rnp1(bti)}) using the relations
%\bea
%\label{eq:R:p1nm1(p1n)}\P_1(n-1)&=&\bt_{n-1}+\P_1(n),\eea and \bea\label{eq:R:p1np1(p1n)}\P_1(n+1)&=&\P_1(n)-\bt_n,
%\eea
%respectively.

Now we eliminate $R_n$ and $R_{n-1}$ in (\ref{eq:R:Rec_Rn_Rnm1_rn}).  First, we rearrange (\ref{sys:R:MCS1.1})
%to express $R_n$ in terms of $r_n$ and $r_{n+1}$, we have
to give $$R_n=\al-r_n-r_{n+1}.$$
Substituting $r_n$ and $r_{n+1}$ given by (\ref{eq:R:rn(bti)}) and (\ref{eq:R:rnp1(bti)}) respectively into the
above equation, we see that $R_n$ may be expressed in terms of $\bt_n$ and $\P_1(n)$ as
\bea\label{eq:R:Rn(btnp1n)}
R_n&=&
\al-
\frac{1}{k^2-1}\left[n+\frac{1}{2}-k^2\Big(\al+\bt+n+\frac{3}{2}\Big)(\beta_n+\bt_{n+1})+2k^2\textsf{p}_1(n)\right].
%\nonumber\\
\eea
Replacing  $n$ by $n-1$ and  $\P_1(n-1)$ by $\P_1(n)+\bt_{n-1}$ in (\ref{eq:R:Rn(btnp1n)}) we have %implies that $R_{n-1}$
%may be expressed in terms of $\bt_n$ and $\P_1(n)$ as
\bea\label{eq:R:Rnm1(btnp1n)}
R_{n-1}&=&
\al-
\frac{1}{k^2-1}\left[n-\frac{1}{2}-k^2\Big(\al+\bt+n+\frac{1}{2}\Big)\beta_n-k^2\Big(\al+\bt+n-\frac{3}{2}\Big)
\bt_{n-1}+2k^2\P_1(n)\right]. 
%\nonumber\\
\eea
In the final step we substitute $R_n$ given by (\ref{eq:R:Rn(btnp1n)}), $R_{n-1}$
given by (\ref{eq:R:Rnm1(btnp1n)}) and $r_n$ given by (\ref{eq:R:rn(bti)}) into (\ref{eq:R:Rec_Rn_Rnm1_rn}).
The equation (\ref{eq:R:D_Eqn_btn_p1n^2_Intro}), quadratic in $\P_1(n)$ follows, completing the proof of
Theorem \ref{thm:R:p1(n)^2Eqn}.
\end{proof}

\subsection{Proof of Theorem \ref{thm:R:3rd_Ord_D_Eqn_btn}}
Equipped with (\ref{eq:R:D_Eqn_btn_p1n^2_Intro}), we solve for $\P_1(n)$ to obtain a third order
difference equation satisfied by  $\bt_n$, and prove Theorem \ref{thm:R:3rd_Ord_D_Eqn_btn}.

\begin{proof}%[Lemma \ref{Lem:R:DiffEqnBtn}]
%The recurrence relation (\ref{eq:R:D_Eqn_btn_p1n^2_Intro}) may be regarded as a quadratic function in $\P_1(n)$.
Solving for $\P_1(n)$ we find,
\bea\label{eq:R:p1n(btngn)}
2k^2\P_1(n)&=&-2k^2\Big(\al+\bt+n-\frac{1}{2}\Big)\bt_n+\bt+\al k^2\pm g_n,
\eea
with $g_n$ given by
\bea\label{eq:R:g(n)Def}
g_n^2&=&4k^4\Big(\al+\bt+n+\frac{1}{2}\Big)\Big(\al+\bt+n-\frac{3}{2}\Big)\bt_n\bt_{n-1}
+4k^4\Big(\al+\bt+n+\frac{3}{2}\Big)\Big(\al+\bt+n-\frac{1}{2}\Big)\bt_{n+1}\bt_n\nonumber\\
&&+8k^4\Big(\al+\bt+n+\frac{1}{2}\Big)\Big(\al+\bt+n-\frac{1}{2}\Big)\bt_n^2
-4k^2(k^2+1)\Big(\al+\bt+n+\frac{1}{2}\Big)\Big(\al+\bt+n-\frac{1}{2}\Big)\bt_n\nonumber\\
&&+(\al k^2+\bt)^2+k^2n(n+2\al+2\bt).
\eea
Making the shift $n\to n+1$ in \eqref{eq:R:p1n(btngn)} leads to
\bea\label{eq:R:p1(n+1)(btngn)}
2k^2\P_1(n+1)&=&-2k^2\Big(\al+\bt+n+\frac{1}{2}\Big)\bt_{n+1}+\bt+\al k^2\pm g_{n+1}.
\eea
If we subtract (\ref{eq:R:p1(n+1)(btngn)}) from (\ref{eq:R:p1n(btngn)}), and note that $\bt_n=\P_1(n)-\P_1(n+1),$
%to eliminate $\P_1(n)-\P_1(n+1)$.
a little simplification gives,
\bea\label{eq:R:3ODbtnv2}
0&=&2k^2\Big(\al+\bt+n+\frac{1}{2}\Big)\big(\bt_{n+1}-\bt_n\big)\mp g_{n+1}\pm g_n.
\eea
%Since $g_n$ is defined by \eqref{eq:R:g(n)Def},
%we can see that (\ref{eq:R:3ODbtnv2})
%is in fact
This is a third order difference equation satisfied by $\bt_{n+2}$, $\bt_{n+1}$, $\bt_n$ and $\bt_{n-1}$.

All that remains to be done is to choose the correct
sign in (\ref{eq:R:3ODbtnv2}). Taking the sum of (\ref{eq:R:3ODbtnv2}) from $j=0$ to $j=n-1$, we
would like to recover (\ref{eq:R:p1n(btngn)}). With the aid of the identity
\bea
\sum_{j=0}^{n-1}j\Big(\bt_{j+1}-\bt_j\Big)&=&n\bt_n-\sum_{j=0}^{n-1}\bt_{j+1},
\eea
the %telescopic
sum of (\ref{eq:R:3ODbtnv2}) from $j=0$ to $j=n-1$ simplifies to
\bea\label{eq:R:TelSum(btnp1ngn)}
0&=&2k^2\Big(\al+\bt+n-\frac{1}{2}\Big)\bt_n+2k^2\P_1(n)\mp g_n\pm g_0.
\eea
From (\ref{eq:R:g(n)Def}), we see that $g_0=\al k^2+\bt$. We take the ``lower" signs to make (\ref{eq:R:TelSum(btnp1ngn)})
compatible with (\ref{eq:R:p1n(btngn)}).

Hence $\bt_n$ satisfies the following third order difference equation:
\bea\label{eq:R:3ODbtn}
0&=&2k^2\Big(\al+\bt+n+\frac{1}{2}\Big)\big(\bt_{n+1}-\bt_n\big)+g_{n+1}-g_n.
\eea
%with $g_n$  \eqref{eq:R:g(n)Def}.

Finally, %we can eliminate
removing the square roots
%(i.e. $g_{n+1}$ and $g_n$)
from (\ref{eq:R:3ODbtn}) we obtain
\bea\label{eq:R:3rdOrdDiffEqnBtnFull}
16k^4\Big(\al+\bt+n+\frac{1}{2}\Big)^2\big(\bt_{n+1}-\bt_n\big)^2g_n^2&=&
\Bigg[g_{n+1}^2-g_n^2-4k^4\Big(\al+\bt+n+\frac{1}{2}\Big)^2(\bt_{n+1}-\bt_n)^2\Bigg]^2, \nonumber\\
\eea
thus completing the proof
%Substituting in for $g_{n+1}$ and $g_n$ using
%\eqref{eq:R:g(n)Def} and simplifying leads to (\ref{eq:R:3rd_Ord_D_Eqn_btn_Intro}),
of Theorem \ref{thm:R:3rd_Ord_D_Eqn_btn}.
\end{proof}
\subsection{Proof of Theorem \ref{thm:R:Rees_General}}
%Recall that Rees %considered the case where $\al=\bt=-1/2$, and
 %found a difference equation (\ref{eq:R:Rees_D_Eqn}) satisfied by $\bt_n$ and $\P_1(n)$.
% We end this section by showing that  (\ref{eq:R:D_Eqn_btn_p1n^2_Intro}) obtained via our
%approach reduces to Rees' equation (\ref{eq:R:Rees_D_Eqn}) for $\al=\bt=-1/2.$
% In doing so, we obtain a general version of Rees' equation (valid for $\al>-1$ and $\bt\in\mathbb{R}$).

%\begin{lemma}\label{Lem:R:GenRees}
%Equation (\ref{eq:R:DiffEqnbtn}) can be written without the quadratic term %in $\P_1(n)$ as a third order equation in $\bt_n$ as
%\bea\label{eq:R:GenReesForm}
%\bt_{n-1}a_{n-2}&=&\bt_na_n+1
%\eea
%where\footnote{In Rees \cite{Rees1945}, $a_n$ is instead referred to as %$H_n$.}
%\bea\label{eq:R:an}
%a_n&:=&2\left(\al+\bt+n+\frac{3}{2}\right)k^2(\bt_n+\bt_{n+1})-2\left[\left(\bt+n+\frac{1}{2}\right)k^2+\left(\al+n+\frac{1}{2}\right)\right]-4k^2\P_1(n).
%\eea
%The above equation is a generalization of the equation found by Rees \cite[Eq. %(55)]{Rees1945}, and reduces to it for the case of $\al=-1/2$ and $\bt=-1/2$.
%\end{lemma}
We state a very simple lemma, before the proof of Theorem \ref{thm:R:Rees_General}.
\begin{lemma}
$\bt_{n-1}$, $\P_1(n)$ and $\P_1(n-1)$ satisfy the identity:
\bea\label{eq:R:p1n2Ident}
[\P_1(n-1)]^2-[\P_1(n)]^2=\bt_{n-1}^2+2\bt_{n-1}\P_1(n).
\eea
\end{lemma}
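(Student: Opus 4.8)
The plan is to prove the identity \eqref{eq:R:p1n2Ident} by a direct, purely algebraic manipulation using the fundamental relation \eqref{def:p1nbetan}, namely $\P_1(n)-\P_1(n+1)=\bt_n$. Shifting the index $n\to n-1$ in that relation gives $\P_1(n-1)-\P_1(n)=\bt_{n-1}$, so that $\P_1(n-1)=\P_1(n)+\bt_{n-1}$. This single substitution is the only ingredient needed, and the rest is factoring a difference of squares.

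Concretely, I would start from the left-hand side $[\P_1(n-1)]^2-[\P_1(n)]^2$ and factor it as $\bigl(\P_1(n-1)-\P_1(n)\bigr)\bigl(\P_1(n-1)+\P_1(n)\bigr)$. Using $\P_1(n-1)-\P_1(n)=\bt_{n-1}$ for the first factor and $\P_1(n-1)+\P_1(n)=2\P_1(n)+\bt_{n-1}$ (again from $\P_1(n-1)=\P_1(n)+\bt_{n-1}$) for the second factor, the product becomes $\bt_{n-1}\bigl(2\P_1(n)+\bt_{n-1}\bigr)=\bt_{n-1}^2+2\bt_{n-1}\P_1(n)$, which is exactly the right-hand side of \eqref{eq:R:p1n2Ident}. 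This completes the argument in a couple of lines.

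There is no real obstacle here: the lemma is a formal consequence of the recurrence \eqref{def:p1nbetan} that was established earlier from the three-term recurrence and the definition of $\P_1(n)$, and requires no analytic input, no information about the weight, and no use of the supplementary conditions. If anything, the only point to keep straight is the index bookkeeping, i.e.\ that the relevant instance of \eqref{def:p1nbetan} is the one with $n$ replaced by $n-1$, so that the $\bt$ appearing is $\bt_{n-1}$ rather than $\bt_n$. Once that is fixed, the difference-of-squares factorization delivers the result immediately.
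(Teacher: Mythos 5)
Your proof is correct and is exactly the argument the paper has in mind: the paper states this as ``a very simple lemma'' without writing out a proof, and the intended justification is precisely the substitution $\P_1(n-1)=\P_1(n)+\bt_{n-1}$ from \eqref{def:p1nbetan} followed by expanding (or factoring) the difference of squares. Nothing is missing.
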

\begin{proof}[Proof of Theorem \ref{thm:R:Rees_General}]
We isolate $[\P_1(n)]^2$ in (\ref{eq:R:D_Eqn_btn_p1n^2_Intro}) to obtain
%\bea\label{eq:R:p1n^2}
%k^2\P_1(n)^2&=&
%-\Big[2k^2c_{n-1}\bt_n-\al k^2-\bt\Big]\P_1(n)+k^2c_{n+1}c_{n-1}\bt_n^2\nonumber\\
%&&+\Big[k^2c_{n+1}c_{n-1}\bt_{n+1}-\big(c_{n-\al}k^2+c_{n-\bt}\big)c_{n-1}+k^2c_nc_{n-2}\bt_{n-1}\Big]\bt_n+\frac{n}{2}\left(\frac{n}{2}+\al+\bt\right).
%\eea
\begin{small}
\bea\label{eq:R:p1n^2}
k^2[\P_1(n)]^2&=&-\Bigg[2k^2\Big(\al+\bt+n-\frac{1}{2}\Big)\bt_n-\al k^2-\bt\Bigg]\P_1(n)+k^2\Big(\al+\bt+n+\frac{3}{2}\Big)\Big(\al+\bt+n-\frac{1}{2}\Big)\bt_n^2\nonumber\\
&&+\Bigg[k^2\Big(\al+\bt+n+\frac{3}{2}\Big)\Big(\al+\bt+n-\frac{1}{2}\Big)\bt_{n+1}-\bigg(\Big(\bt+n+\frac{1}{2}\Big)k^2+\Big(\al+n+\frac{1}{2}\Big)\bigg)\Big(\al+\bt+n-\frac{1}{2}\Big)\nonumber\\
&&\qquad+k^2\Big(\al+\bt+n+\frac{1}{2}\Big)\Big(\al+\bt+n-\frac{3}{2}\Big)\bt_{n-1}\Bigg]\bt_n+\frac{n}{2}\left(\frac{n}{2}+\al+\bt\right).
\eea
\end{small}\noindent
We %can
now replace [$\P_1(n)]^2$ in (\ref{eq:R:p1n2Ident}) by the right side of (\ref{eq:R:p1n^2}).
In order to do the same for $[\P_1(n-1)]^2$, we use (\ref{eq:R:p1n^2}) again, %but where we have
but with  $n$ replaced by $n-1$. This equation now contains a linear term in $\P_1(n-1)$, which
is $\P_1(n)+\bt_{n-1}.$ %due to (\ref{eq:R:p1nm1(p1n)}).
This gives% rise to
%\begin{small}
\bea\label{eq:R:3rdOrdDiffEqnBtnp1n}
0&=&
4k^2(\bt_n-\bt_{n-1})\P_1(n)-2k^2\Big(\al+\bt+n+\frac{3}{2}\Big)\bt_n\big(\bt_n+\bt_{n+1}\big)+
2\bigg[\Big(\al+n+\frac{1}{2}\Big)+\Big(\bt+n+\frac{1}{2}\Big)k^2\bigg]\bt_n
\nonumber\\
&&+2k^2\Big(\al+\bt+n-\frac{5}{2}\Big)\bt_{n-1}(\bt_{n-1}+\bt_{n-2})-2\bigg[\Big(\al+n-\frac{3}{2}\Big)+
\Big(\bt+n-\frac{3}{2}\Big)k^2\bigg]\bt_{n-1}-1.
\eea
%\end{small}\noindent
In order to re-write the above equation consistent with Rees'  equation
%\cite[Eq. 55]{Rees1945},
we replace  $\bt_{n-1}\P_1(n)$ in (\ref{eq:R:3rdOrdDiffEqnBtnp1n}) using the easy identity,
$$
\bt_{n-1}\P_1(n)=\bt_{n-1}\big(\P_1(n-2)-\bt_{n-1}-\bt_{n-2}\big),
$$
to obtain
\bea
0&=&\bt_{n-1}\Bigg[2k^2\Big(\al+\bt+n-\frac{1}{2}\Big)(\bt_{n-1}+\bt_{n-2})-2
\bigg[\Big(\al+n-\frac{3}{2}\Big)+\Big(\bt+n-\frac{3}{2}\Big)k^2\bigg]-4k^2\P_1(n-2)\Bigg]\nonumber\\
&&-\bt_{n}\Bigg[2k^2\Big(\al+\bt+n+\frac{3}{2}\Big)(\bt_{n}+\bt_{n+1})-2
\bigg[\Big(\al+n+\frac{1}{2}\Big)+\Big(\bt+n+\frac{1}{2}\Big)k^2\bigg]-4k^2\P_1(n)\Bigg]-1.\nonumber\\
\eea
This is (\ref{eq:R:Gen_Rees_D_Eqn}) with
$C_n$ given by (\ref{eq:R:Cn}).% then leads to equation \ref({eq:R:Gen_Rees_D_Eqn}), completing the proof of Theorem \ref{thm:Rees_General}.
\end{proof}
\begin{remark}
To reiterate, equation (\ref{eq:R:Gen_Rees_D_Eqn}), or
its equivalent form (\ref{eq:R:3rdOrdDiffEqnBtnp1n}) are essentially generalizations of Rees' equation,
%but
valid for $\al>-1$ and $\bt\in\mathbb{R}$. %in the proof of
%Theorem \ref{thm:3rd_Ord_D_Eqn_btn},
Had we eliminated
$\P_1(n)$ from (\ref{eq:R:3rdOrdDiffEqnBtnp1n}), we would have obtained the following fourth order difference equation satisfied by $\bt_n$:
\begin{small}
\bea\label{eq:R:Rees4ODEqn}
2k^2\big(\bt_{n+1}-\bt_n\big)\bt_n\big(\bt_n-\bt_{n-1}\big)&=&
k^2\Big(\al+\bt+n+\frac{3}{2}\Big)\big(\bt_{n+1}^2-\bt_n^2\big)\bt_n+k^2\Big(\al+\bt+n-\frac{3}{2}\Big)\bt_n\big(\bt_n^2-\bt_{n-1}^2\big)\nonumber\\
&&+(k^2+1)\bt_n\big(\bt_{n+1}+\bt_n+\bt_{n-1}\big)
-3(k^2+1)\bt_{n+1}\bt_{n-1}+\frac{\bt_{n+1}}{2}-\bt_n+\frac{\bt_{n-1}}{2}\nonumber
\\&&
-k^2\Big(\al+\bt+n-\frac{5}{2}\Big)\big(\bt_{n+1}-\bt_n\big)\bt_{n-1}\big(\bt_{n-1}+\bt_{n-2}\big)\nonumber\\
&&-k^2\Big(\al+\bt+n+\frac{5}{2}\Big)\big(\bt_{n+2}+\bt_{n+1}\big)\bt_{n+1}\big(\bt_{n}-\bt_{n-1}\big).
%\nonumber\\
\eea
\end{small}\noindent

Our difference equation (\ref{eq:R:3rd_Ord_D_Eqn_btn_Intro}) is one order lower %with respect to
%$\bt_n$
compared with  %Rees' equivalent result
(\ref{eq:R:Rees4ODEqn}). The third
 order difference equation arises mainly due to (\ref{eq:R:D_Eqn_btn_p1n^2_Intro}), an equation that is
 not contained in the formalism of Rees.

The important idea is that if we instead start from %from the general version of Rees' equation,
\eqref{eq:R:3rdOrdDiffEqnBtnp1n}, we can
obtain our third order difference equation (\ref{eq:R:3rd_Ord_D_Eqn_btn_Intro})
{\em if we combine (\ref{eq:R:3rdOrdDiffEqnBtnp1n}) with (\ref{eq:R:D_Eqn_btn_p1n^2_Intro})}.
%By eliminating $\P_1(n)$ from (\ref{eq:R:D_Eqn_btn_p1n^2_Intro}) and \eqref{eq:R:3rdOrdDiffEqnBtnp1n}, %we have a third order difference equation for $\bt_n$, (the equation is %satisfied by $\bt_{n+1},\bt_n,\bt_{n-1}$ and $\bt_{n-2}$). Upon making the %shift $n\to n+1$, and after some simplification, we find that this equation %is actually (\ref{eq:R:3rd_Ord_D_Eqn_btn_Intro}).
Currently, this appears to be the only way %other method
of eliminating $\P_1(n)$ from \eqref{eq:R:3rdOrdDiffEqnBtnp1n} without increasing the order of the difference equation from third to fourth.
\end{remark}

\section{Second order difference equations for  \texorpdfstring{$\bt_n$}{recurrence coefficient} and  \texorpdfstring{$\P_1(n)$}{sub-leading term}}\label{Sec:R:DiffEqn_btn_p1n}

In this section we prove Theorems \ref{thm:R:2nd_Ord_D_Eqn} and \ref{thm:R:2D_Eqn_p1n}.
These are second order difference equations satisfied by $\bt_n$ and $\P_1(n)$.

We prove Theorem \ref{thm:R:2nd_Ord_D_Eqn} by first establishing 3 algebraic equations
satisfied by $r_n$, $\bt_n$ and $\P_1(n)$. The `coefficients' of these, depending on $\bt_{n+1}$, $\bt_{n-1}$, $n$,
$k^2$, $\al$ and $\bt$, are treated as constants. We then use MAPLE's elimination algorithm to
eliminate $r_n$ and $\P_1(n)$ and we are left with the `variable' $\bt_n$,
expressed in terms of $\bt_{n+1}$, $\bt_{n-1}$, $n$, $k^2$, $\al$ and $\bt$. This is the second-order
difference equation satisfied by $\bt_n,$ mentioned above.

\begin{lemma}\label{lem:R:2nd_Ord_Sys_(rnbtnp1n)}
The terms $r_n$, $\bt_n$ and $\P_1(n)$ satisfy the following system of 3 equations:
\bea
\label{eq:R:2nd_Ord_Sys_1}(k^2-1)r_n&=&\frac{n}{2}-k^2\left(\al+\bt+n+\frac{1}{2}\right)\beta_n+k^2\textsf{p}_1(n),
\eea
\begin{small}
\bea\label{eq:R:2nd_Ord_Sys_2}
r_n(r_n-\al)&=&
\bt_n
\Bigg[\al-
\frac{1}{k^2-1}\left(n+\frac{1}{2}-k^2\left(\al+\bt+n+\frac{3}{2}\right)(\beta_n+\bt_{n+1})+2k^2\textsf{p}_1(n)\right)
\Bigg]\nonumber\\
&&\;\times\Bigg[\al-
\frac{1}{k^2-1}\left(n-\frac{1}{2}-k^2\left(\al+\bt+n+\frac{1}{2}\right)\beta_n-k^2\left(\al+\bt+n-\frac{3}{2}\right)\bt_{n-1}+2k^2\P_1(n)\right)
\Bigg],
%\nonumber\\
\eea
%\end{small}
%\begin{small}
\bea\label{eq:R:2nd_Ord_Sys_3}
(k^2-1)r_n^2-(n+\bt+\al k^2)r_n-\frac{n}{2}\left(\frac{n}{2}+\bt\right)
+k^2\bt_n
\Bigg[\al+\left(\al+\bt+n+\frac{1}{2}\right)\left(-\al+\bt+n-\frac{1}{2}\right)
\nonumber\\
+\frac{1}{k^2-1}\Bigg\{2(\al+\bt+n)\Big(n+2k^2\P_1(n)\Big)-\frac{1}{2}-k^2\Bigg(2\left(\al+\bt+n+\frac{1}{2}\right)^2-1\Bigg)\bt_n
\nonumber\\
-k^2\left(\al+\bt+n+\frac{3}{2}\right)\left(\al+\bt+n-\frac{1}{2}\right)\bt_{n+1}-k^2
\left(\al+\bt+n+\frac{1}{2}\right)\left(\al+\bt+n-\frac{3}{2}\right)\bt_{n-1}\Bigg\}\Bigg]
&=&0.
\eea
\end{small}
\end{lemma}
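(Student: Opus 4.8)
The plan is to read off all three identities from relations already established in Section~\ref{Sec:R:LadOpAuxVar}, so that the only substantive work lies in substituting the closed forms of the auxiliary variables $R_n$ and $R_{n-1}$. Equation (\ref{eq:R:2nd_Ord_Sys_1}) requires no new argument at all: it is literally (\ref{eq:R:p1n(rnbtn)}), obtained earlier by telescoping the combined $(S_2)$ relation (\ref{eq:R:S2Combined}), and I would simply quote it.

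For (\ref{eq:R:2nd_Ord_Sys_2}) I would start from the double-pole residue condition (\ref{sys:R:MCS2p1.1}), namely $r_n(r_n-\al)=\bt_n R_n R_{n-1}$. The key input is that both $R_n$ and $R_{n-1}$ were already expressed purely in terms of $\bt_n$, $\bt_{n+1}$, $\bt_{n-1}$ and $\P_1(n)$ during the proof of Theorem~\ref{thm:R:p1(n)^2Eqn}, as (\ref{eq:R:Rn(btnp1n)}) and (\ref{eq:R:Rnm1(btnp1n)}) respectively. Substituting these two closed forms into the right-hand side reproduces (\ref{eq:R:2nd_Ord_Sys_2}) verbatim; since the lemma records the product in unexpanded (factored) form, this is a direct substitution with no simplification needed.

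For (\ref{eq:R:2nd_Ord_Sys_3}) I would instead begin from Lemma~\ref{lem:R:cnRn-1_cn-1Rn(btnrn)}, that is (\ref{eq:R:cnRn-1_cn-1Rn(btnrn)}), whose left-hand side already coincides with the first three terms of (\ref{eq:R:2nd_Ord_Sys_3}). It then remains to rewrite the bracket $\bigl[(\al+\bt+n+\tfrac{1}{2})R_{n-1}+(\al+\bt+n-\tfrac{1}{2})R_n-(\al+\bt+n+\tfrac{1}{2})(\al+\bt+n-\tfrac{1}{2})\bigr]$ using the same closed forms (\ref{eq:R:Rn(btnp1n)}) and (\ref{eq:R:Rnm1(btnp1n)}), move everything to one side so that the $k^2\bt_n$ term changes sign, clear the common factor $1/(k^2-1)$, and collect terms according to their dependence on $\P_1(n)$, $\bt_n$, $\bt_{n+1}$ and $\bt_{n-1}$.

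I expect this final collection step to be the only genuine obstacle. Writing $S=\al+\bt+n$, a quick check on the pieces not carrying the $1/(k^2-1)$ factor shows they combine into $S^2-2\al S-\tfrac14$, which is precisely $\al+(S+\tfrac12)(-\al+\bt+n-\tfrac12)$ as displayed; the $\P_1(n)$-terms assemble into $4k^2 S\,\P_1(n)/(k^2-1)$ and the pure-number terms into $(2Sn-\tfrac12)/(k^2-1)$, together giving the factor $2(\al+\bt+n)\bigl(n+2k^2\P_1(n)\bigr)$, while the coefficients of $\bt_n$, $\bt_{n+1}$, $\bt_{n-1}$ reduce to $-k^2\bigl(2(S+\tfrac12)^2-1\bigr)$, $-k^2(S+\tfrac32)(S-\tfrac12)$ and $-k^2(S+\tfrac12)(S-\tfrac32)$ respectively, all sitting inside the single $1/(k^2-1)$ bracket exactly as written. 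This bookkeeping is routine but error-prone, and is the natural candidate for verification with a computer algebra system, in keeping with the MAPLE-based elimination strategy announced for the subsequent proof of Theorem~\ref{thm:R:2nd_Ord_D_Eqn}.
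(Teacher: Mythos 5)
Your proposal follows exactly the paper's own proof: (\ref{eq:R:2nd_Ord_Sys_1}) is quoted as a restatement of (\ref{eq:R:p1n(rnbtn)}), (\ref{eq:R:2nd_Ord_Sys_2}) comes from substituting (\ref{eq:R:Rn(btnp1n)}) and (\ref{eq:R:Rnm1(btnp1n)}) into (\ref{sys:R:MCS2p1.1}), and (\ref{eq:R:2nd_Ord_Sys_3}) from making the same substitutions in (\ref{eq:R:cnRn-1_cn-1Rn(btnrn)}). Your explicit bookkeeping with $S=\al+\bt+n$ is correct and merely spells out the elimination the paper leaves implicit.
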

\begin{proof}
Equation (\ref{eq:R:2nd_Ord_Sys_1}) is a restatement of (\ref{eq:R:p1n(rnbtn)}).
To obtain (\ref{eq:R:2nd_Ord_Sys_2}), we eliminate $R_n$ and $R_{n-1}$ in (\ref{sys:R:MCS2p1.1})
using (\ref{eq:R:Rn(btnp1n)}) and (\ref{eq:R:Rnm1(btnp1n)}) respectively.

Equation (\ref{eq:R:2nd_Ord_Sys_3}) may be obtained in two steps. First, we restate (\ref{eq:R:cnRn-1_cn-1Rn(btnrn)}) as,
\bea
(k^2-1)r_n^2-r_n(n+\bt+\al k^2)-\frac{n}{2}\left(\frac{n}{2}+\bt\right)
&=&k^2\bt_n\Bigg[\left(\al+\bt+n+\frac{1}{2}\right)R_{n-1}+\left(\al+\bt+n-\frac{1}{2}\right)R_n
\nonumber\\&&
-\left(\al+\bt+n+\frac{1}{2}\right)\left(\al+\bt+n-\frac{1}{2}\right)\Bigg].
\eea
Second, we eliminate $R_n$ and $R_{n-1}$ from the above with (\ref{eq:R:Rn(btnp1n)}) and (\ref{eq:R:Rnm1(btnp1n)})
to obtain (\ref{eq:R:2nd_Ord_Sys_3}).
\end{proof}
\subsection{Proof of Theorem \ref{thm:R:2nd_Ord_D_Eqn}}
\begin{proof}
Equations (\ref{eq:R:2nd_Ord_Sys_1})--(\ref{eq:R:2nd_Ord_Sys_3}) may be regarded as a system of non-linear
algebraic equations satisfied by $r_n$, $\bt_n$ and $\P_1(n)$, where we
treat $\bt_{n+1}$, $\bt_{n-1}$, $n$, $k^2$, $\al$ and $\bt$ as constants.

MAPLE's elimination algorithm is then applied to express $\P_1(n)$ and $r_n$ in terms of $\bt_n$ as
\begin{scriptsize}
\bea\label{eq:R:p1n(btn)}
4k^2K_n\P_1(n)&=&
2\Bigg[2{k}^2 \left(\alpha+\beta+n+\frac{3}{2}\right) \beta_{{n+1}}+10{k}^2
\left(\alpha+\beta+n+{\frac 7 {10}}\right) \beta_{{n}}
%\nonumber\\&&
-(2\bt+2n+1){k}^2 -(2\al+2n+1)\Bigg]
 \left(\alpha+\beta+n-\frac{3}{2}\right) k^2\beta_{{n}}\beta_{{n-1}}
\nonumber\\&&
+2 \Bigg[10\left(\alpha+\beta+n-\frac{3}{10}\right) {k}^2 \beta_{{n}}-(2\bt+2 n-1) {k}^2 -(2\al+2n-1)\Bigg]
 \left(\alpha+\beta+n+\frac{3}{2}\right) {k}^2 \beta_{{n}}\beta_{{n+1}}
\nonumber\\&&
+20 \left(\alpha+\beta+n+\frac{3}{2}\right) \left(\alpha+\beta+n-\frac{3}{10}\right) {k}^4 {\beta_{{n}}}^3
%\nonumber\\& &
-24 \bigg[k^2(\al+\bt+n)\left(\bt+n+\frac{1}{3}\right)+(\al+\bt+n)\left(\al+n+\frac{1}{3}\right)-\frac{1}{4}(k^2+1)\bigg] k^2{\beta_{{n}}}^2
\nonumber\\&&
+\bigg[(2\bt+2n+1)(2\bt+2n-1){k}^4 +\Big(12 {n}^2+2n-1+(16n+2)(\al+\bt)+8\al\bt\Big ) {k}^2 +(2\al+2n+1)(2\al+2n-1)\bigg] \beta_{{n}}
\nonumber\\&&
-2 n\bigg[\left(\bt+\frac{n}{2}\right) {k}^2 +\alpha+\frac{n}{2}\bigg],
\eea
\end{scriptsize}\noindent
and
\begin{scriptsize}
\bea\label{eq:R:rn(btn)}
4(k^2-1)K_nr_n&=&
%\nonumber\\&&
2 \Bigg[2{k}^2 \left(\alpha+\beta+n+\frac{3}{2}\right) \beta_{{n+1}}+
6{k}^2 \left(\alpha+\beta+n+\frac{5}{6}\right) \beta_{{n}}-(2\bt+2n+1) {k}^2 -(2\al+1)\Bigg]\left(\alpha+\beta+n-\frac{3}{2}\right)k^2\bt_n\beta_{{n-1}}
\nonumber\\&&
+2 \Bigg[6{k}^2 \left(\alpha+\beta+n-\frac{5}{6}\right) \beta_{{n}}-(2\bt+2n-1)
 {k}^2 -(2 \alpha-1)\Bigg] \left(\alpha+\beta+n+\frac{3}{2}\right) k^2\beta_{{n}}\beta_{{n+1}}
-\Big(28(\al+\bt+n)^2+9\Big){k}^4 {\beta_{{n}}}^3
\nonumber\\&&
+8\Bigg[(k^2+1)(\al+\bt+n)\left(2\al-\bt-n\right)+\frac{3}{4}(k^2+1)-3(\al+\bt+n)(\al-n-\bt)\Bigg] k^2{\beta_{{n}}}^2
\nonumber\\&&
+\bigg[(2\bt+2n+1)(2\bt+2n-1){k}^4 -(4 {\alpha}^2 +4 {\beta}^2+1 ) {k}^2 +4(\al^2 -{n}^2) -8 n\beta-1\bigg]
\beta_{{n}}-2n ({k}^2 -1) \left(\bt+\frac{n}{2}\right),
\eea
\end{scriptsize}\noindent
respectively. Here  $K_n$ is defined by
%\begin{small}
\bea
K_n&:=&K(\bt_{n+1},\bt_n,\bt_{n-1},k^2,n,\al,\bt),
\nonumber\\
&=&
\Big(1+12\,{k}^{2}{\beta_{{n}}}^{2}+ 2\Big(  \left( \beta_{{n+1}}+
\beta_{{n-1}}-2 \right) {k}^{2}-2 \Big) \beta_{{n}}\Big)(\al+\bt+n)
+3k^2\beta_{{n}} \left( \beta_{{n+1}}-\beta_{{n-1}} \right).
\eea
%\end{small}\noindent
We now substitute $\P_1(n)$ and $r_n$ given by (\ref{eq:R:p1n(btn)}) and (\ref{eq:R:rn(btn)}) respectively,
into (\ref{eq:R:2nd_Ord_Sys_1}) and obtain Theorem~\ref{thm:R:2nd_Ord_D_Eqn} after simplifications.
\end{proof}
%
%
%
%(\ref{eq:R:2nd_Ord_Diff_Eqn}) reduces to existing well-known solutions.
\subsection{Proof of Theorem \ref{thm:R:2D_Eqn_p1n}}

Substituting
\bea
\label{eq:R:bt_n+1(p1n)}\bt_{n+1}&=&\P_1(n+1)-\P_1(n+2),\\
\label{eq:R:bt_n(p1n)}\bt_{n}&=&\P_1(n)-\P_1(n+1),\\
\label{eq:R:bt_n-1(p1n)}\bt_{n-1}&=&\P_1(n-1)-\P_1(n),
\eea
into the second order difference equation for $\bt_n$, (\ref{eq:R:2nd_Ord_Diff_Eqn}), we obtain a third order difference equation
satisfied by $\P_1(n)$.
However, using the same method used in the proof of Theorem \ref{thm:R:2nd_Ord_D_Eqn}, a second order
difference equation satisfied by $\P_1(n)$ may be obtained.

We give an outline of the proof.

The crucial idea is to use the system of algebraic equations in Lemma~\ref{lem:R:2nd_Ord_Sys_(rnbtnp1n)},
(\ref{eq:R:2nd_Ord_Sys_1})--(\ref{eq:R:2nd_Ord_Sys_3}). We substitute $\bt_{n+1}$, $\bt_n$ and $\bt_{n-1}$ into this system
 using (\ref{eq:R:bt_n+1(p1n)})--(\ref{eq:R:bt_n-1(p1n)}) to obtain a system of equations satisfied by the variables
 $r_n$, $\P_1(n+2)$, $\P_1(n+1)$, $\P_1(n)$ and $\P_1(n-1)$. By using MAPLE's elimination algorithm to eliminate
 $r_n$ and $\P_1(n+2)$, from the 3 equations, we are left with a second order
  difference equation satisfied by $\P_1(n+1)$, $\P_1(n)$ and $\P_1(n-1)$, thus completing the proof of Theorem~\ref{thm:R:2D_Eqn_p1n}.

%In a manner similar to what we have done in sections \ref{sec:R:3_D_Red_Jac} and \ref{sec:R:2_D_Red_Jac}, we
 %verify that for special values of $\al$, $\bt$ and $k^2$, our second order difference equation (\ref{eq:R:2D_Eqn_p1n})
 %for $\P_1(n)$ reduces to existing well-known solutions.
%
%

\subsection{Proof of Theorem \ref{thm:R:Diff_Eqn_Pn}}

The ladder operator approach leads directly to a second order linear ode satisfied by the orthogonal polynomials. For the given weight, the equation can be described by exactly the same auxiliary variables that were used to find the previous difference equations.

In this case, eliminating $P_{n-1}(x)$ from (\ref{Defn:LadderOp}), we have the ode
\bea\label{eq:R:Diff_Eqn_Pn}
P_n^{\;\prime\prime}(x)
-\left(
\textsf{v}^\prime(x)
+\frac{A_n^\prime(x)}{A_n(x)}\right)P_n^{\;\prime}(x)
%&&\nonumber\\
+\left(B_n^\prime(x)-B_n(x)\frac{A_n^\prime(x)}{A_n(x)}+\sum\limits_{j=0}^{n-1}A_j(x)\right)P_n(x)&=&0.
%\nonumber\\
\eea
From (\ref{eq:R:vp(x)}), (\ref{eq:R:An}) and (\ref{eq:R:Bn}), we may rewrite $\textsf{v}^\prime(x)$, $A_n(x)$ and $B_n(x)$ as the following rational functions in $x$:
\bea
\label{eq:R:vprime_Pn_Proof}
\textsf{v}^{\prime}(x)=-\Big(\log w(x)\Big)^\prime&=&-\frac{2x\Big[k^2(\al+\bt)x^2-\al-\bt k^2\Big]}{(x^2-1)(k^2x^2-1)},\\
\label{eq:R:An_Pn_Proof}
A_n(x)&=&-\frac{2\Big[R_n(k^2x^2-1)+kR_n^\ast(x^2-1)\Big]}{(x^2-1)(k^2x^2-1)},\\
\label{eq:R:Bn_Pn_Proof}
B_n(x)&=&\frac{2x\Big[r_n(k^2x^2-1)+k^2r_n^\ast(x^2-1)\Big]}{(x^2-1)(k^2x^2-1)},
\eea
%Recall the functions $A_n(x)$ and $B_n(x)$ are given by
%\bea
%\label{eq:R:An}
%A_n(x)&=&
%        -\frac{R_n(k^2)}{x-1}+\frac{R_n(k^2)}{x+1}-\frac{R_n^\ast(k^2)}{x-1/k}+\frac{R_n^\ast(k^2)}{x+1/k},\\
%\label{eq:R:Bn}
%B_n(x)&=&
%        \frac{r_n(k^2)}{x-1}+\frac{r_n(k^2)}{x+1}+\frac{r_n^\ast(k^2)}{x-1/k}+\frac{r_n^\ast(k^2)}{x+1/k},
%\eea
where
$R_n$, $r_{n}$, $R^{\ast}_{n}$, and $r^{\ast}_{n}$ are the auxiliary variables, defined by (\ref{eq:R:AuxVar}).

We also know that  $R_n^\ast$ can be expressed in terms of $R_n$ as (\ref{eq:R:RnRnastv1}),
\bea\label{eq:R:RnRnastv2}
\frac{R_n^\ast}{k}&=&\al+\bt+n+\frac{1}{2} -R_n,
\eea
while $r_n^\ast$ can be expressed in terms of $r_n$ as (\ref{eq:R:rnrnast}),
\bea\label{eq:R:rnrnast_2}
r_n^\ast&=&-r_n-\frac{n}{2}.
\eea

Now, we can express $R_n$ in terms of $\bt_n$, $\bt_{n+1}$ and $\P_1(n)$ as (\ref{eq:R:Rn(btnp1n)}),
\bea\label{eq:R:Rn(btnp1n)_2}
R_n&=&
\al-
\frac{1}{k^2-1}\left[n+\frac{1}{2}-k^2\Big(\al+\bt+n+\frac{3}{2}\Big)(\beta_n+\bt_{n+1})+2k^2\textsf{p}_1(n)\right],
%\nonumber
\eea
and $r_n$ may be expressed in terms of $\bt_n$ and $\P_1(n)$ as (\ref{eq:R:p1n(rnbtn)}),
\bea
\label{eq:R:rn(btnp1n)}
r_n&=&\frac{1}{k^2-1}\left[\frac{n}{2}-k^2\Big(\al+\bt+n+\frac{1}{2}\Big)\beta_n+k^2\textsf{p}_1(n)\right].
%\nonumber
\eea

%We may rewrite equations (\ref{eq:R:An}), (\ref{eq:R:Bn}) and $(-\log w(x))^\prime$ %as
%\bea
%\label{eq:R:An_Pn_Proof}
%A_n(x)&=&-\frac{2\Big[R_n(k^2x^2-1)+kR_n^\ast(x^2-1)\Big]}{(x^2-1)(k^2x^2-1)},\\
%\label{eq:R:Bn_Pn_Proof}
%B_n(x)&=&\frac{2x\Big[r_n(k^2x^2-1)+k^2r_n^\ast(x^2-1)\Big]}{(x^2-1)(k^2x^2-1)},\\
%\label{eq:R:vprime_Pn_Proof}
%-\Big(\log w(x)\Big)^\prime=\textsf{v}^{\prime}(x)&=&-\frac{2x\Big[k^2(\al+\bt)x^2-\al-\bt %k^2\Big]}{(x^2-1)(k^2x^2-1)}.
%\eea
\subsubsection{Coefficient of $P_n^{\;\prime}(x)$ in (\ref{eq:R:Diff_Eqn_Pn})}
To calculate the coefficient of $P_n^{\;\prime}(x)$ in (\ref{eq:R:Diff_Eqn_Pn}), we calculate the value of $-\textsf{v}^\prime(x)-A_n^\prime(x)/A_n(x)$.

To calculate $A_n^\prime(x)/A_n(x)$, we notice that $\frac{d}{dx}\log\left(\frac{p(x)}{q(x)}\right)=\frac{p^\prime(x)}{p(x)}-\frac{q^\prime(x)}{q(x)}$, and hence
\begin{small}
\bea
\frac{A_n^\prime(x)}{A_n(x)}&=&
        \frac{2x\Big[k^2R_n+kR_n^\ast\Big]}{\Big[R_n(k^2x^2-1)+kR_n^\ast(x^2-1)\Big]}
        -\frac{2x\Big[2k^2x^2-1-k^2\Big]}{(x^2-1)(k^2x^2-1)}.
\eea

To simplify the first term (squared term in $x$ in the denominator), we proceed with the following steps:
\begin{enumerate}
\item
We eliminate $R_n^\ast$ in favor of $R_n$ in both the numerator and denominator using (\ref{eq:R:RnRnastv2}).
\item
We eliminate $R_n$ in favor of $\bt_n$, $\bt_{n+1}$ and $\P_1(n)$ using (\ref{eq:R:Rn(btnp1n)_2}).
\item
We simplify and identify $M_n(x)$ and $Y(x)$.
\end{enumerate}
Hence
\bea
\frac{A_n^\prime(x)}{A_n(x)}&=&
\frac{2xk^2\Big[\al+\bt+n+\frac{1}{2}\Big]}{\Big[(x^2-1)k^2(\al+\bt+n+\frac{1}{2})+(k^2-1)R_n\Big]}
-\frac{2x\Big[2k^2x^2-1-k^2\Big]}{(x^2-1)(k^2x^2-1)},\\
&=&
\frac{2xk^2\Big[\al+\bt+n+\frac{1}{2}\Big]}{\Big[(x^2-1)k^2(\al+\bt+n+\frac{1}{2})+(k^2-1)\al-(n+\frac{1}{2})+k^2(\al+\bt+n+\frac{3}{2})(\bt_n+\bt_{n+1})-2k^2\P_1(n)\Big]}
\nonumber\\&&
-\frac{2x\Big[2k^2x^2-1-k^2\Big]}{(x^2-1)(k^2x^2-1)},\\
&=&
\frac{M_n^{\prime}(x)}{M_n(x)}
-\frac{2x\Big[2k^2x^2-1-k^2\Big]}{(x^2-1)(k^2x^2-1)},\\
\label{eq:R:Anp/An(MnYn)}
&=&
\frac{M_n^{\prime}(x)}{M_n(x)}-\frac{Y^\prime(x)}{Y(x)},
\eea
\end{small}
where
\bea
M_n(x)&:=&-2\left(\al+\bt+n+\frac{1}{2}\right)k^2x^2-C_n,\\
C_n&:=&2\left(\al+\bt+n+\frac{3}{2}\right)k^2(\bt_n+\bt_{n+1})-2\left[\left(\bt+n+\frac{1}{2}\right)k^2+\al+n+\frac{1}{2}\right]-4k^2\P_1(n),\qquad\\
Y(x)&:=&(1-x^2)(1-k^2x^2).
\eea
Hence combining (\ref{eq:R:vprime_Pn_Proof}) and (\ref{eq:R:Anp/An(MnYn)}), the coefficient of $P_n^\prime(x)$ in (\ref{eq:R:Diff_Eqn_Pn}) is given by
\begin{small}
\bea
-\textsf{v}^{\prime}(x)-\frac{A_n^\prime(x)}{A_n(x)}&=&
-\frac{M_n^\prime(x)}{M_n(x)}
+\frac{2x\Big[2k^2x^2-1-k^2\Big]}{(x^2-1)(k^2x^2-1)}
+\frac{2x\Big[k^2x^2(\al+\bt)-\al-\bt k^2\Big]}{(x^2-1)(k^2x^2-1)},\\
&=&
-\frac{M_n^\prime(x)}{M_n(x)}
+\frac{2x\Big[k^2x^2(\al+\bt+2)-1-\al-k^2(1+\bt)\Big]}{(x^2-1)(k^2x^2-1)},\\
&=&
-\frac{M_n^\prime(x)}{M_n(x)}+\frac{1}{2}\frac{X^\prime(x)}{X(x)},
\eea
\end{small}
where
\bea
X(x)&:=&(1-x^2)^{2\al+2}(1-k^2x^2)^{2\bt+2}.
\eea
\subsubsection{Coefficient of $P_n(x)$ in (\ref{eq:R:Diff_Eqn_Pn})}
To calculate the coefficient of $P_n(x)$ in (\ref{eq:R:Diff_Eqn_Pn}), we  calculate the value of $B_n^\prime(x)-B_n(x)\frac{A_n^\prime(x)}{A_n(x)}+\sum\limits_{j=0}^{n-1}A_j(x)$.

We first eliminate $R_n^\ast$ in favor of $R_n$ in (\ref{eq:R:An_Pn_Proof}) using (\ref{eq:R:RnRnastv2}).
\begin{small}
\bea
A_n(x)&=&
-\frac{2\Big[x^2(k^2R_n+kR_n^\ast)-R_n-kR_n^\ast\Big]}{(x^2-1)(k^2x^2-1)},\\
\label{eq:R:Pn_Proof_SumAn_1}
&=&
-\frac{2\Big[k^2(x^2-1)(\al+\bt+n+\frac{1}{2})+(k^2-1)R_n\Big]}{(x^2-1)(k^2x^2-1)}.
\eea
\end{small}\noindent
We then take a telescopic sum of (\ref{eq:R:Pn_Proof_SumAn_1}) from $j=0$ to $j=n-1$, where we use the following result obtained from Theorem~\ref{lem:R:SumR(btnrn)}:
\begin{small}
\bea\label{eq:R:SumR(btnrn)_2}
\frac{1}{2}\sum\limits_{j=0}^{n-1}\!R_j&=&
        -r_n(\alpha+\beta+n)+\frac{n}{2}\left(\frac{n}{2}+\beta+\alpha k^2\right)\frac{1}{k^2-1}
        -\frac{k^2}{k^2-1}\left(\al+\bt+n+\frac{1}{2}\right)\left(\al+\bt+n-\frac{1}{2}\right)\bt_n.
%\nonumber\\
\eea
\end{small}
We proceed with the following steps:
\begin{enumerate}
\item
We replace $\sum\limits_{j=0}^{n-1}R_j$ in the telescopic sum of (\ref{eq:R:Pn_Proof_SumAn_1}) using (\ref{eq:R:SumR(btnrn)_2}).
\item
We replace $r_n$ by $\bt_n$ and $\P_1(n)$ using (\ref{eq:R:rn(btnp1n)}).
\end{enumerate}
Hence
\begin{small}
\bea
\sum\limits_{j=0}^{n-1}A_j(x)&=&
-\frac{2k^2(x^2-1)n\left(\al+\bt+\frac{n}{2}\right)}{(x^2-1)(k^2x^2-1)}
\nonumber\\&&
+\frac{4(k^2-1)r_n(\al+\bt+n)-2n\left(\frac{n}{2}+\bt+\al k^2\right)+4k^2\left(\al+\bt+n+\frac{1}{2}\right)\left(\al+\bt+n-\frac{1}{2}\right)\bt_n}{(x^2-1)(k^2x^2-1)},\\
&=&
\frac{-2k^2(x^2-1)n\left(\al+\bt+\frac{n}{2}\right)+4\Big[\frac{n}{2}-k^2\left(\al+\bt+n+\frac{1}{2}\right)\bt_n+k^2\P_1(n)\Big](\al+\bt+n)}{(x^2-1)(k^2x^2-1)}
\nonumber\\&&
+\frac{-2n\left(\frac{n}{2}+\bt+\al k^2\right)+4k^2\left(\al+\bt+n+\frac{1}{2}\right)\left(\al+\bt+n-\frac{1}{2}\right)\bt_n}{(x^2-1)(k^2x^2-1)},\\
\label{eq:R:Pn_Proof_SumAn_2}
&=&
\frac{-k^2x^2n(n+2\al+2\bt)+nk^2(n+2\bt)+n(n+2\al)+4k^2(\al+\bt+n)\P_1(n)-2k^2\left(\al+\bt+n+\frac{1}{2}\right)\bt_n}{(x^2-1)(k^2x^2-1)}.\nonumber\\
\eea
\end{small}
To compute $B_n(x)$ we have
from (\ref{eq:R:Bn_Pn_Proof}),  that $B_n(x)$ is given by
\bea
B_n(x)&=&\frac{2x\Big[k^2x^2(r_n+r_n^\ast)-r_n-k^2r_n^\ast\Big]}{(x^2-1)(k^2x^2-1)}.\nonumber
\eea
To simplify $B_n(x)$, we proceed in the following way:
\begin{enumerate}
\item
We eliminate $r_n^\ast$ in favor of $r_n$ in $(20)$ using (\ref{eq:R:rnrnast_2}).
\item
We then eliminate $r_n$ in favor of $\bt_n$ and $\P_1(n)$ using (\ref{eq:R:rn(btnp1n)}).
\item
We simplify and identify $L_n(x)$.
\end{enumerate}
Hence
\bea
B_n(x)&=&
\frac{2x\Big[-\frac{n}{2}k^2x^2+(k^2-1)r_n+\frac{n}{2}k^2\Big]}{(x^2-1)(k^2x^2-1)},\\
&=&
\frac{x\Big[-nk^2x^2+n(1+k^2)-2k^2(\al+\bt+n+\frac{1}{2})\bt_n+2k^2\P_1(n)\Big]}{(x^2-1)(k^2x^2-1)},\\
\label{eq:R:Pn_Proof_Bn_1}
&=&
-\frac{L_n(x)}{Y(x)},
\eea
where
\bea
L_n(x)&:=&x\Big[nk^2x^2-n(1+k^2)+2k^2\left(\al+\bt+n+\frac{1}{2}\right)\bt_n-2k^2\P_1(n)\Big].
\eea

Hence, combining (\ref{eq:R:Anp/An(MnYn)}), (\ref{eq:R:Pn_Proof_SumAn_2}) and (\ref{eq:R:Pn_Proof_Bn_1}), we have
\begin{small}
\bea
B_n^\prime(x)-B_n(x)\frac{A_n^\prime(x)}{A_n(x)}+\sum\limits_{j=0}^{n-1}A_j(x)&=&
-\frac{L_n(x)/x+2nk^2x^2}{Y(x)}+\frac{L_n(x)M_n^\prime(x)}{Y(x)M_n(x)}
\nonumber\\&&
+\frac{1}{Y(x)}\bigg[-k^2x^2n(n+2\al+2\bt)+nk^2(n+2\bt)+n(n+2\al)
\nonumber\\&&\qquad\qquad
+4k^2(\al+\bt+n)\P_1(n)-2k^2\Big(\al+\bt+n+\frac{1}{2}\Big)\bt_n\bigg],\\
&=&
\frac{U_n(x)}{Y(x)}+\frac{L_n(x)M_n^\prime(x)}{Y(x)M_n(x)},
\eea
\end{small}
where
\bea
U_n(x)&:=&-k^2x^2n(n+2\al+2\bt+3) + 2k^2(2n+2\al+2\bt+1)\P_1(n+1)+nk^2(n+2\bt+1)
\nonumber\\&&
+n(n+2\al+1).
\eea
Hence, we have completed the derivation to equation (\ref{eq:R:DiffEqnPn_Gen}), completing the proof of Theorem~\ref{thm:R:Diff_Eqn_Pn}.

\section{Special solutions of the second and third order difference equation for \texorpdfstring{$\bt_n$}{recurrence coefficient}}\label{Sec:R:Special_Cases}

A number of explicit solutions can be obtained through the specialization of $\al,\;\bt,$ and $k^2.$
In these situations, $\bt_n$ are the essentially the recurrence coefficients of the symmetric
Jacobi polynomials. We present some here to verify the difference equations.

We begin with the third-order difference equation.
\subsection{Reduction to Jacobi Polynomials: Third order difference equation}
\label{sec:R:3_D_Red_Jac}
There are three special choices of $\al$, $\bt$ and $k^2$ where our weight
(\ref{defn:R:w(xk2)})
%\begin{equation*}
%w(x,k^2)=(1-x^2)^{\alpha}(1-k^2x^2)^{\beta},\qquad\qquad x\in [-1,1], \qquad \al,\bt>-1,\qquad k^2\in(0,1),
%\end{equation*}
reduces to the Jacobi weight
\bea\label{def:JacobiWeightSimple}
w^{(\al,\bt)}(x)&=&(1-x)^{\al}(1+x)^{\bt}, \qquad x\in[-1,1], \;\;\; \al,\;\bt>-1.
\eea
Note that the recurrence coefficients for monic Jacobi polynomials are given by \cite{KoekoekLeskySwart2000}
\bea
\bt_n&=&\frac{4n(n+\al)(n+\bt)(n+\al+\bt)}{(2n+\al+\bt)^2(2n+\al+\bt+1)(2n+\al+\bt-1)}.
\eea
Please note that this is a double use of $\beta$ and $\alpha$, but it is in keeping with the standard notation.
The three cases are as follows:
\begin{enumerate}
\item\label{C:R:3DE:JacRed1} $k^2=0$, while $\al$ and $\bt$ remain arbitrary.
\\
For this  case, our weight (\ref{defn:R:w(xk2)}) reduces to
$$w(x,0)=(1-x^2)^\al=w^{(\al,\al)}(x).$$
This is a special case of the Jacobi weight where $\al=\bt.$
%(\ref{def:JacobiWeightSimple}).

Since there is only one parameter in the weight, we should expect
$\bt$ not to appear in the third-order difference equation (\ref{eq:R:3rd_Ord_D_Eqn_btn_Intro}).
It turns out, in this situation, (\ref{eq:R:3rd_Ord_D_Eqn_btn_Intro}) reduces to two equations. One is
\bea\label{eq:R:3_D_Eqn(k=0)}
\beta_{n+1}(2 \alpha + 2n +3) = 1 + \beta_{n}(2 \alpha + 2n -1).
\eea
The other is the same with $\alpha$ replaced with $\alpha + 2\beta.$

Solving the difference equations with the initial
condition %\footnote{We can also choose the initial condition
%$\bt_1=\mu_2(\al,\bt,0)/\mu_0(\al,\bt,0)$.}
$\bt_0=0,$ leads to the following solutions: % for $\bt_n$:
\bea
\label{eq:R:btn_Jac_Soln_(al=bt)}\bt_n&=&\frac{n(n+2\al)}{(2n+2\al+1)(2n+2\al-1)},\\
\bt_n&=&\frac{n(n+2\al+4\bt)}{(2n+2\al+4\bt+1)(2n+2\al+4\bt-1)}.
\eea
The $\bt$-independent solution gives the recurrence coefficient associated with the Jacobi weight
$w^{(\al,\al)}(x).$ The second solution, which depends on $\bt,$ is not the recurrence coefficient associated
with that weight. However, it is the correct recurrence coefficient
for the Jacobi weight
$w^{(\al+2\bt,\al+2\bt)}(x)$.
% instead of
%$w^{(\al,\bt)}(x)$.
\\
\item\label{C:R:3DE:JacRed2} $\bt=0$, while $\al$ and $k^2$ remain arbitrary.\\
In this case, although our weight (\ref{defn:R:w(xk2)}) reduces to the same
weight as in case \ref{C:R:3DE:JacRed1}, a third-order equation remains.
It is not clear how a solution for $\bt_n$ maybe obtained; however, it can be verified that the known solution (\ref{eq:R:btn_Jac_Soln_(al=bt)}) satisfies the equation.\\
\item\label{C:R:3DE:JacRed3} $k^2=1$, while $\al$ and $\bt$ remains arbitrary.\\
In this case (\ref{defn:R:w(xk2)}) reduces to a special case of the Jacobi weight, i.e.,
%(\ref{def:JacobiWeightSimple})
 %where we have
$$w^{(\al+\bt,\al+\bt)}(x)=(1-x^2)^{\al+\bt}.$$
Since $\al$ and $\bt$ are arbitrary, this is the same as the weight function for the cases
\ref{C:R:3DE:JacRed1} and \ref{C:R:3DE:JacRed2}, but with $\al+\bt$ replaced by $\al$.
Nevertheless, (\ref{eq:R:3rd_Ord_D_Eqn_btn_Intro}) remains a non-linear third-order difference equation, and while
we are unable to solve it explicitly, we can again readily check that
\bea
\bt_n&=&\frac{n(n+2\al+2\bt)}{(2n+2\al+2\bt+1)(2n+2\al+2\bt-1)},
\eea
is a known solution.
A sub-case of the above where $\al=0$ is also a known solution for the recurrence coefficient of the weight $w^{(\bt,\bt)}(x)$. 

In summary, for cases \ref{C:R:3DE:JacRed1}-\ref{C:R:3DE:JacRed3}, the special values of
$\al$, $\bt$ and $k^2$ lead to what is essentially the same Jacobi weight function (in case \ref{C:R:3DE:JacRed3}, $\al$ is replaced by $\al+\bt$),
$$w^{(\al,\al)}(x)=(1-x^2)^\al.$$
 However, (\ref{eq:R:3rd_Ord_D_Eqn_btn_Intro}) reduces to $3$ distinct difference equations
 for the $3$ cases considered. We check by substitution that all $3$ difference
 equations have the correct solution.
%\item\label{C:R:3DE:JacRed4} $k^2=1$, while $\al$ and $\bt$ remain arbitrary.\\
%For this case, our weight (\ref{defn:R:w(xk2)}) reduces to
%$$w(x,1)=(1-x^2)^{\al+\bt},$$
%which is a special case of the Jacobi weight (\ref{def:JacobiWeightSimple}),
%%where the weight is
%namely, $w^{(\al+\bt,\al+\bt)}(x)$.
%
%In this situation (\ref{eq:R:3rd_Ord_D_Eqn_btn_Intro}) remains a third-order %non-linear
%difference equation, but we check that
%\bea
%\bt_n&=&\frac{n(n+2\al+2\bt)}{(2n+2\al+2\bt+1)(2n+2\al+2\bt-1)},
%\eea
%is a solution.
\end{enumerate}
\subsection{Reduction to Jacobi Polynomials: Second order difference equation}\label{sec:R:2_D_Red_Jac}
There are three special values of $\al$, $\bt$ and $k^2$ which results in our weight (\ref{defn:R:w(xk2)})
%\begin{equation*}
%w(x,k^2)=(1-x^2)^{\alpha}(1-k^2x^2)^{\beta},\qquad\qquad x\in [-1,1], \qquad %\al,\bt>-1,\qquad k^2\in[0,1],
%\end{equation*}
 reducing to the Jacobi weight (\ref{def:JacobiWeightSimple}).
%\bea\label{def:JacobiWeightSimple}
%w^{(\al,\bt)}(x)&=&(1-x)^{\al}(1+x)^{\bt}, \qquad x\in[-1,1], \qquad \al,\bt>-1.
%\eea
%Note that the recurrence coefficients for monic Jacobi polynomials are given %by \cite{KoekoekLeskySwart2000}
%\bea
%\bt_n&=&\frac{4n(n+\al)(n+\bt)(n+\al+\bt)}{(2n+\al+\bt)^2(2n+\al+\bt+1)(2n+\al+\bt-1)}.
%\eea
These are:
\begin{enumerate}
\item\label{C:R:2DE:JacRed1} $k^2=0$, while $\al$ and $\bt$ remain arbitrary.\\

Here, the weight (\ref{defn:R:w(xk2)}) reduces to
$$w(x,0)=(1-x^2)^\al=w^{(\al,\al)}(x).$$
%This corresponds to a special case of the Jacobi weight
%(\ref{def:JacobiWeightSimple})
%where the weight is given by $w^{(\al,\al)}(x)$.

Going through with the computations,
%Hence, since the parameter $\bt$ has disappeared from our weight, we would expect it to also disappear from
 our  second order difference equation (\ref{eq:R:2nd_Ord_Diff_Eqn})
 %. In this case, we find that (\ref{eq:R:2nd_Ord_Diff_Eqn})
 reduces to the following {\it quadratic} equation:
  %satisfied by $\bt_n$:
\begin{small}
\bea\label{eq:R:2_D_Eqn(k=0)}
\left[  \left( \alpha+2\beta+n-\frac{1}{2} \right)  \left( \alpha+2
\beta+n+\frac{1}{2} \right) \beta_{{n}}- \left( \alpha+\beta+\frac{n}{2} \right)
 \left( \beta+\frac{n}{2} \right)  \right] &&
 \nonumber\\
 \times\left[  \left( \alpha+n-\frac{1}{2}
 \right)  \left( \alpha+n+\frac{1}{2} \right) \beta_{{n}}-\frac{n}{2} \left( \alpha+
\frac{n}{2} \right)  \right]
&=&0.
\eea
\end{small}
Solving for $\bt_n$, we find,
\bea
\bt_n&=&\frac{n(n+2\al)}{(2n+2\al+1)(2n+2\al-1)},\\
\bt_n&=&\frac{n(n+2\al+4\bt)}{(2n+2\al+4\bt+1)(2n+2\al+4\bt-1)}.
\eea
The first solution gives the recurrence coefficient $\bt_n$ for the Jacobi weight $w^{(\al,\al)}(x)$, while
the second, $\bt$-dependent solution, %However, it is the correct recurrence coefficient for the
is the recurrence coefficient for
%Jacobi weight (\ref{def:JacobiWeightSimple}), if we have
$w^{(\al+2\bt,\al+2\bt)}(x).$
%instead of $w^{(\al,\bt)}(x)$.\\
\begin{remark}
In the situation where $k^2=0$, we have seen, the third-order difference equation (\ref{eq:R:3rd_Ord_D_Eqn_btn_Intro}) reduces to a first-order
difference equation (\ref{eq:R:3_D_Eqn(k=0)}), while our second-order difference equation (\ref{eq:R:2nd_Ord_Diff_Eqn}) reduces
to an {\em algebraic equation} satisfied by $\bt_n$, (\ref{eq:R:2_D_Eqn(k=0)}). Nonetheless, (\ref{eq:R:3_D_Eqn(k=0)})
and (\ref{eq:R:2_D_Eqn(k=0)}) give us the same solution for $\bt_n$.
\end{remark}

Cases \ref{C:R:3DE:JacRed2} and \ref{C:R:3DE:JacRed3} from the previous section can also be directly verified as before but do not seem to reduce to something simpler.
\end{enumerate}
\subsection{Fixed points of the second order equation}

In anticipation of an asymptotic expansion for the desired quantities and in keeping with what is know in the
classical Jacobi case, we now find the fixed points of the second order equation.

Take $n$ large, fix $k^2,$ and replace $\bt_n$ by $C$
in (\ref{eq:R:2nd_Ord_Diff_Eqn}),
where $C$ depends on $\al$, $\bt$ and $k^2$,
we find that
%whereas (\ref{eq:R:3rd_Ord_D_Eqn_btn_Intro}) reduced to (\ref{eq:R:btnConstSolnEq}),
(\ref{eq:R:2nd_Ord_Diff_Eqn}) becomes a degree $6$ equation in $C$:
\begin{small}
\bea\label{eq:R:2_D:btnConstSolnEq}
0&=&
(4 C-1)^2 (4 {k}^2 C-1) ^2 ({k}^2 -1) ^2 (\alpha+\beta+n) ^4 +\bigg[2 ({k}^2 -1) ^2 (4 {k}^2 C-1) {\alpha}^2 +2 ({k}^2 -1) ^2 (4 C-1) {\beta}^2
\nonumber\\&&
-2 C(8 {k}^2 C-1-{k}^2 ) (16 {k}^4 {C}^2 -4 {k}^4 C-4 {k}^2 C+{k}^4 -{k}^2 +1) \bigg] (4 {k}^2 C-1) (4 C-1) (\alpha+\beta+n) ^2
\nonumber\\&&
+\bigg[
48k^4C^3-12k^2(1+k^2)C^2-\Big((4\al^2-1)k^4-(4\al^2+4\bt^2+1)k^2+4\bt^2-1\Big)C+(\al^2-\bt^2)(k^2-1)
\bigg]^2.
\nonumber\\
\eea
%\bea
%0&=&({k}^2 -1)^2 (4 C-1)^2 (4 {k}^2 C-1)^2 {n}^4 +4 ({k}^2 -1)^2 (4 C-1)^2 %(4 {k}^2 C-1)^2 (\alpha+\bt) {n}^3
%\nonumber\\&&
%+2(4 C-1)(4 {k}^2 C-1)\bigg[2 ({k}^2 -1)^2 (6 C-1)(4 {k}^2 C-1) {\alpha}^2 %+6 ({k}^2 -1)^2 (4 C-1) (4 {k}^2 C-1) \beta \alpha
%+2 ({k}^2 -1)^2 (4 C-1) (6 {k}^2 C-1) {\beta}^2
%\nonumber\\&&
%- C(8 {k}^2 C-1-{k}^2 ) (16 {k}^4 {C}^2 -4 {k}^4 C-4 {k}^2 C+{k}^4 -{k}^2 %+1) \bigg] {n}^2
%\nonumber\\&&
%+4(4 C-1)(4 {k}^2 C-1)(\al+\bt)
%\bigg[4 C({k}^2 -1) ^2 (4 {k}^2 C-1) {\alpha}^2 +2 ({k}^2 -1) ^2 (4 C-1) %(4 {k}^2 C-1) \beta \alpha+4 C{k}^2 ({k}^2 -1) ^2 (4 C-1) {\beta}^2
%\nonumber\\&&
%-C(8 {k}^2 C-1-{k}^2 ) (16 {k}^4 {C}^2 -4 {k}^4 C-4 {k}^2 C+{k}^4 -{k}^2 %+1)
%\bigg] n
%\nonumber\\&&
%+16 {C}^2 ({k}^2 -1) ^2 (4 {k}^2 C-1) ^2 {\alpha}^4 +16 C({k}^2 -1) ^2 (-1+4 %C) (4 {k}^2 C-1) ^2 \beta {\alpha}^3 +(16 C({k}^2 -1) ^2 (-1+4 C) (4 {k}^2 %C-1) (6 {k}^2 C-{k}^2 -1) {\beta}^2 -4 C(4 {k}^2 C-1) (256 {C}^4 {k}^6 -160 %{C}^3 {k}^6 -96 {C}^3 {k}^4 +72 {k}^6 {C}^2 +24 {k}^2 {C}^2 -14 C{k}^6 -2 %C+{k}^6 ) ) {\alpha}^2 +(16 {k}^2 C({k}^2 -1) ^2 (-1+4 C) ^2 (4 {k}^2 C-1) %{\beta}^3 -4 C(-1+4 C) (4 {k}^2 C-1) (8 {k}^2 C-1-{k}^2 ) (16 {k}^4 {C}^2 %-4 {k}^4 C-4 {k}^2 C+{k}^4 -{k}^2 +1) \beta) \alpha+16 {k}^4 {C}^2 ({k}^2 %-1) ^2 (-1+4 C) ^2 {\beta}^4 -4 C(-1+4 C) (256 {C}^4 {k}^8 -96 {k}^8 {C}^3 %-160 {C}^3 {k}^6 +24 {C}^2 {k}^8 +72 {k}^4 {C}^2 -14 {k}^2 C-2 C{k}^8 +1) %{\beta}^2 +{C}^2 (1+{k}^2 -12 {k}^2 C+{k}^4 -12 {k}^4 C+48 {k}^4 {C}^2 ) %^2 \eea
\end{small}\noindent
This is also a degree 4 equation in $n.$ For large $n$, we discard beyond
${\rm O}(n^4)$ to find,
\bea\label{eq:R:2_D:btnConstSolnEq_Large_n}
\left(k^2C-\frac{1}{4}\right)^2\left(C-\frac{1}{4}\right)^2(k^2-1)^2&=&0.
\eea
Hence,
\begin{align}
\label{eq:R:2_D:NLDE(s1)}
\lim_{n\to\infty}\bt_n=\frac{1}{4},&&
\lim_{n\to\infty}\bt_n=\frac{1}{4k^2},
\end{align}
since $0<k^2<1.$ We discard the $\frac{1}{4k^2}$ solution, since from \cite{Szego1939,ChenIsmail1997T}, it is to be expected that $\lim_{n\to\infty}{\sqrt {\bt_n}}=\frac{1}{2},$ 
where $\frac{1}{2}$ is the transfinite diameter\footnote{For a real interval $[A,B]\subset\mathbb{R}$, the transfinite diameter is given by $(B-A)/4$.} of $[-1,1]$.

\section{Large \texorpdfstring{$n$}{n} expansion of  \texorpdfstring{$\bt_n$}{recurrence coefficient} and  \texorpdfstring{$\P_1(n)$}{sub-leading term}}\label{Sec:R:LargeN_Exp}
In this section we use our difference equations (\ref{eq:R:2nd_Ord_Diff_Eqn}) and (\ref{eq:R:2D_Eqn_p1n}) to compute the large $n$ expansion of $\bt_n$ and $\textsf{p}_1(n)$ respectively. Using our difference equations, we find a complete asymptotic expansion for $\bt_n$ and $\P_1(n)$, with explicitly computable constants that only depend on $k^2$, $\al$ and $\bt$. The form of the asymptotic expansion for the following quantities was shown to exist in \cite{KuijlaarsMcVaVan2004} using the Riemann-Hilbert approach. 
 Our difference equation approach enables a very quick calculation of the asymptotic expansion of $\bt_n$ and $\P_1(n)$, with determined constants.
We shall then show in the next two sections that either expansion ($\bt_n$ or $\P_1(n)$) may be used to compute the large $n$ expansion of the Hankel determinant $D_n$, which holds independently of \cite{KuijlaarsMcVaVan2004}.

In the large $n$ limit, it follows from \cite{KuijlaarsMcVaVan2004} that $\bt_n$ has an expansion of the following form:
\bea\label{eq:R:btnExp}
\bt_n&=&a_0(k^2,\alpha,\beta)+\sum\limits_{j=1}^\infty \frac{a_j(k^2,\al,\bt)}{n^j},
\qquad \al>-1,\;\;\bt\in\mathbb{R}, \;\;k^2\in(0,1).
\eea
The following table lists some of the expansion coefficients $a_j$ for $\bt_n$ corresponding to some of the Classical Orthogonal Polynomials.
Our general coefficients $a_j(k^2,\al,\bt)$ should reduce to these values for special values of $k^2$, $\al$ and $\bt$.
\begin{footnotesize}
\begin{table}[!ht]
\caption{The Large $n$ Expansion Coefficients of $\bt_n$ for
Some Classical Orthogonal Polynomials (the higher order coefficients are given in terms of $a_2$).}
\vspace{0.2cm}\small
\begin{tabular*}{\textwidth}{@{\extracolsep{\fill} }cccc}\hline
\label{T:R:LargeNClassicalCoeffs}
Name&Jacobi&Legendre &Chebyshev $1$st/$2$nd \\\hline

$w(x)$&$(1-x^2)^{\al}$&$1$&$(1-x^2)^{\mp\frac{1}{2}}$ \\\hline

$a_0$&$\frac{1}{4}$&$\frac{1}{4}$&$\frac{1}{4}$ \\%\hline

$a_1$&$0$&$0$&$0$ \\%\hline

$a_2$&$\frac{1-4\al^2}{16}$&$\frac{1}{16}$&$0$ \\%\hline

$a_3$&$-2\al a_2$&$0$&$0$ \\%\hline

$a_4$&$\frac{a_2}{4}(12\al^2+1)$&$\frac{a_2}{4}$&$0$ \\%\hline

$a_5$&$-\al a_2(4\al^2+1)$&$0$&$0$ \\%\hline

$a_6$&$\frac{a_2}{16}(80\al^4+40\al^2+1)$&$\frac{a_2}{16}$&$0$ \\\hline
\end{tabular*}
\end{table}
\end{footnotesize}\noindent

We use two methods to study the expansion coefficients $a_j(k^2,\al,\bt)$, where we refrain from showing the $k^2$, $\al$ and
$\bt$ dependence henceforth. First using the second-order difference equation (\ref{eq:R:2nd_Ord_Diff_Eqn}) in Section~\ref{sec:R:2_D_btn_Ln_Exp}, and then using the third-order equation (\ref{eq:R:3rd_Ord_D_Eqn_btn_Intro}) in Section~\ref{sec:LargeNExp_btn_Higher_Order}.

\subsection{Large \texorpdfstring{$n$}{n} expansion of second order difference equation for  \texorpdfstring{$\bt_n$}{recurrence coefficient}}\label{sec:R:2_D_btn_Ln_Exp}
Upon substitution of (\ref{eq:R:btnExp}) into (\ref{eq:R:2nd_Ord_Diff_Eqn}), and taking a large $n$ limit, we have an expression of the form
\bea\label{eq:R:2_D:LnExpansion}
e_{-4}n^4+e_{-3}n^3+\sum\limits_{j=-2}^{\infty}\frac{e_{j}}{n^{j}}&=&0,
\eea
where each $e_j$ depends upon the expansion coefficients $a_j$, $\al$, $\bt$ and $k^2$ (to be precisely determined).
Assuming that in order to satisfy the above equation, each power of $n$ is identically zero, the equation $e_{-4}=0$ gives us
the following relation:
\bea
(k^2-1)^2\left(k^2a_0-\frac{1}{4}\right)^2\left(a_0-\frac{1}{4}\right)^2&=&0.
\eea
This is just (\ref{eq:R:2_D:btnConstSolnEq_Large_n}), as expected.

Even though, $k^2\in(0,1)$, we can still apply  the expansion (\ref{eq:R:2_D:LnExpansion}) to obtain the
large $n$ expansion of $\bt_n$  for $k^2=1$. In the following subsections, we consider two separate cases,
one where $k^2\in(0,1)$ and the other where $k^2=1$.

\subsubsection{Large $n$ expansion of (\ref{eq:R:2nd_Ord_Diff_Eqn}) for $k^2\in(0,1)$}
Given the previous computation, it is natural to examine the large $n$ expansion around $\bt_n=\frac{1}{4}$, and 
so we set $a_0=\frac{1}{4}$. We now state the following lemma:
\begin{lemma}
$\bt_n$ has the following large $n$ expansion in powers of $1/n$:
\bea\label{eq:R:btnExp_Lemma}
\bt_n&=&\frac{1}{4}+\sum\limits_{j=1}^\infty \frac{a_j(k^2)}{n^j},
\qquad \al>-1,\;\;\bt\in\mathbb{R},\;\;k^2\in(0,1),
\eea
where
\begin{small}
\bea
\label{eq:R:Ln:2_D:a1}
a_1(k^2)&=&0,\\
\label{eq:R:Ln:2_D:a2}
a_2(k^2)&=&\frac{1-4\al^2}{16},
\\
\label{eq:R:Ln:2_D:a3}
a_3(k^2)&=&
\frac{(4\al^2-1)}{8} {\bigg[ \al+\beta -\frac{\beta}{\sqrt {1-{k}^2} }\bigg] },
\\
\label{eq:R:Ln:2_D:a4}
a_4(k^2)&=&
{\frac{(1-4\al^2)}{64}}\bigg[ 12(\al+\bt)^2+1-{\frac{24(\alpha+\beta)\bt  }{{\sqrt{1-{k}^2}}}}+\frac{12\bt^2}{1-k^2}
\bigg],\\
\label{eq:R:Ln:2_D:a5}
a_5(k^2)&=&
\frac{(4\alpha^2-1)}{64}
\Bigg[ 4\Big(4 (\alpha+\beta) ^2 +1\Big) (\alpha+\beta)
-\frac{\Big(48(\al+\bt)^2 +4{\alpha}^2 -5\Big) \beta}{\sqrt{1-k^2}}
+{\frac {48{\beta}^2 (\alpha+\beta) }{1-{k}^2}}
\nonumber\\*&&
-  {\frac {(16 {\beta}^2 -4 {\alpha}^2 +9) \beta }{(1-{k}^2)^{\frac{3}{2}} }} \Bigg],
\\
\label{eq:R:Ln:2_D:a6}
a_6(k^2)&=&
\frac{5(1-4{\alpha}^2 )}{64}
\Bigg[4(\alpha+\beta) ^4 +2(\alpha+\beta) ^2 +{\frac{1}{20}}- {\frac {{\beta}^2 (4\al^2-4 {\beta}^2  -9) }{(1-{k}^2 )^2 }}
+{\frac {{\beta}^2 \Big(24(\al+\bt)^2 +4{\alpha}^2 -7 \Big) }{1-{k}^2 }}
\nonumber\\*&&
-\frac{1}{\sqrt{1-k^2}}
\bigg\{\Big(16(\al +{\beta})^2 +4 {\alpha}^2 -5\Big) \beta  (\alpha+\beta)
-{\frac {\beta(\al+\bt) (4\al^2-16\bt^2-9) }{1-{k}^2 }}\bigg\}
\Bigg],
\eea
\end{small}\noindent
are the first few terms, with more terms easily computable.

We can check that the limits of each coefficient as $k^2\to 0$ reduces to those listed in Table~\ref{T:R:LargeNClassicalCoeffs}.

\end{lemma}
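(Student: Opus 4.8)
The plan is to substitute the ansatz (\ref{eq:R:btnExp_Lemma}) into the second order difference equation (\ref{eq:R:2nd_Ord_Diff_Eqn}) and to match powers of $n$. The only preparation needed is to re-expand the neighbouring coefficients: writing $\bt_{n\pm1}=\tfrac{1}{4}+\sum_{j\geq1}a_j(k^2)(n\pm1)^{-j}$ and expanding each $(n\pm1)^{-j}=n^{-j}(1\pm n^{-1})^{-j}$ by the binomial series, both $\bt_{n+1}$ and $\bt_{n-1}$ become power series in $1/n$ whose coefficients are explicit finite linear combinations of the $a_j$. Feeding these together with $\bt_n$ into the polynomial (\ref{eq:R:2nd_Ord_Diff_Eqn}) --- which is of total degree $6$ in $\bt_{n+1},\bt_n,\bt_{n-1}$ and of degree $4$ in $n$ --- and collecting reproduces exactly the structure (\ref{eq:R:2_D:LnExpansion}), namely $e_{-4}n^4+e_{-3}n^3+\sum_{j\geq-2}e_jn^{-j}=0$, in which each $e_j$ is an explicit polynomial in finitely many of the $a_i$ and in $\al,\bt,k^2$. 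Given the size of the coefficient data (the $34$ nonzero $c_{p,q,r}$ of \ref{App:2ndOrd_DE_Coeffs}), I would carry out the substitution and collection in a computer algebra system, consistently with the MAPLE computations used elsewhere in the paper.

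Imposing $a_0=\tfrac{1}{4}$ from the outset, I would then solve $e_{-4}=0,\ e_{-3}=0,\dots$ in increasing order. The top relation $e_{-4}=0$ is precisely the fixed-point equation (\ref{eq:R:2_D:btnConstSolnEq_Large_n}) and holds identically; the next orders force $a_1=0$, and the order-$n^0$ condition fixes $a_2$. Already here the degeneracy surfaces: the order-$n^0$ condition is a repeated (double) root in $a_2$, so that $a_2=(1-4\al^2)/16$ is forced uniquely and, notably, is independent of both $\bt$ and $k^2$, matching the $k^2$-free entry in Table~\ref{T:R:LargeNClassicalCoeffs}.

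The main obstacle is exactly this double-root degeneracy, recorded by the squared factors $(k^2C-\tfrac{1}{4})^2(C-\tfrac{1}{4})^2$ in (\ref{eq:R:2_D:btnConstSolnEq_Large_n}): it means that the tower $e_j=0$ is \emph{not} a triangular linear recursion in which each relation is linear in the newest coefficient. Concretely, if one momentarily drops the neighbour differences and sets $\bt_{n+1}=\bt_n=\bt_{n-1}=C_n$, the order-$n^0$ relation is the perfect square $\bigl(4a_2-(1-4\al^2)/4\bigr)^2=0$, and the coefficient that would multiply $a_3$ at the next order vanishes identically, the order-$n^{-1}$ relation degenerating to $0=0$. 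Hence the determination of $a_3$, and of every higher coefficient, is genuinely controlled by the neighbour differences $\bt_{n+1}-\bt_n$ and $\bt_{n-1}-\bt_n$, which enter the squared symbol quadratically; solving the resulting relation amounts to extracting a square root whose radicand is proportional to $\bt^2/(1-k^2)$. This is the origin of the factor $1/\sqrt{1-k^2}$ in $a_3$ and, on iterating the recursion, of the powers $(1-k^2)^{-1},(1-k^2)^{-3/2},(1-k^2)^{-2}$ in $a_4,a_5,a_6$. The sign of that square root is the only non-mechanical step, and I would fix the correct branch by requiring each coefficient to reduce to the classical Jacobi value of Table~\ref{T:R:LargeNClassicalCoeffs} as $k^2\to0$ (equivalently, by reality of $\bt_n$). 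With the branch chosen the recursion is unambiguous, and I would close the argument by verifying, again by computer algebra, that the stated $a_1,\dots,a_6$ annihilate the relevant coefficients $e_j$ identically in $\al,\bt,k^2$ and collapse to Table~\ref{T:R:LargeNClassicalCoeffs} in the limit $k^2\to0$.
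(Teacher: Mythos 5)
Your proposal follows essentially the same route as the paper: substitute the ansatz into the second-order difference equation, collect powers of $n$ into the tower $e_j=0$, observe that the leading relations are perfect squares forcing $a_0=\tfrac14$, $a_1=0$, $a_2=(1-4\al^2)/16$, and then determine $a_3$ from a quadratic relation whose discriminant is proportional to $\bt^2/(1-k^2)$, fixing the branch by matching the $k^2\to0$ limit against the classical Jacobi coefficients. Your diagnosis of the double-root degeneracy and of the role of the neighbour differences $\bt_{n\pm1}-\bt_n$ is an accurate account of why the paper's $e_1$ vanishes identically and $a_3$ is only pinned down at order $e_2$, so the argument is correct and matches the paper's proof.
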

\begin{proof}
Setting $a_0=\frac{1}{4}$ then leads to the quantity
%\begin{footnotesize}
\bea
e_{-3}&=&
128(k^2-1)^2 \left({k}^2 a_{0}-\frac{1}{4}\right)\left(a_{0}-\frac{1}{4}\right)
%\nonumber\\&&
\bigg[\frac{1}{2} (4 a_{0}-1) (4 {k}^2 a_{0}-1) 
(\alpha+\beta) -a_{1 }({k}^2 +1-8 {k}^2 a_{0})\bigg],
\qquad\;\;
%\nonumber\\
\eea
%\end{footnotesize}
vanishing identically. The equation $e_{-2}=0$ then gives rise to
\bea\label{eq:R:2_D:Ln:e_m2=0}
a_1^2(k^2-1)^4&=&0.
\eea
Recalling that $k^2\in(0,1)$, we obtain
\bea\label{eq:R:Ln:2_D:a1_Proof}
a_1&=&0.
\eea

With $a_0=\frac{1}{4}$ and $a_1=0$, we find that $e_{-1}$ vanishes identically, while the equation $e_0=0$ gives us
\bea\label{eq:R:2_D:Ln:e0=0}
(k^2-1)^4\left(4a_2-\frac{1}{4}+\al^2\right)^2=0.
\eea
Solving for $a_2$, we find that
\bea
\label{eq:R:Ln:2_D:a2_Proof}
a_2&=&\frac{1-4\al^2}{16},
\eea
is independent of $k^2$, {\it the same} as that $a_2$ in Table~\ref{T:R:LargeNClassicalCoeffs}. 

With the above values for $a_0$, $a_1$ and $a_2$, we find that $e_1$ vanishes identically, while setting $e_2=0$ gives
%\begin{small}
\bea\label{eq:R:2_D:Ln:e2=0}
(k^2-1)^3
\bigg[\Big( (4\al^2-1)(\al+\bt)-8a_3\Big)^2 {k}^2 
-\Big(\al(4\al^2-1) -8 a_{3}\Big) \Big((4\al^2-1)(\al+2\bt)-8 a_{3}\Big) \bigg]&=&0.
\qquad
%\nonumber\\
\eea
%\end{small}\noindent
Solving for $a_3$ in the above equation gives
\bea\label{eq:R:Ln:2_D:a3pm}
a_3&=&
\frac{1}{8} {\frac {\Big( (\al+\beta)  ({k}^2-1)\pm\beta \sqrt {1-{k}^2 }\Big) (4 {\alpha}^2 -1) }{{k}^2 -1}}.
\eea
In order to choose the correct sign for $a_3$, we take $k^2\to 0$ to find
\bea
a_3&=&\frac{\al(4\al^2-1)}{8},\quad(+\;\text{sign}),\\
a_3&=&\frac{(\al+2\bt)(4\al^2-1)}{8},\quad(-\;\text{sign}).
\eea
Comparing the above values for $a_3$ to the classical values of $a_3$  in Table~\ref{T:R:LargeNClassicalCoeffs}, 
we find that we must take the positive square root sign in
(\ref{eq:R:Ln:2_D:a3pm}), and arrive at
\bea\label{eq:R:Ln:2_D:a3_Proof}
a_3&=&
\frac{(4\al^2-1)}{8} {\bigg[ \al+\beta -\frac{\beta}{\sqrt {1-{k}^2} }\bigg]  }.
\eea
This procedure easily extends to higher coefficients, from which we find $a_4$, $a_5$ and $a_6$.
\end{proof}  
Equations (\ref{eq:R:Ln:2_D:a3})--(\ref{eq:R:Ln:2_D:a6}) are singular at $k^2=1$. However, it is still possible to obtain 
the large $n$ expansion of $\bt_n$ at $k^2=1$ using equation (\ref{eq:R:2_D:LnExpansion}). As we shall see, the crucial difference 
in the case where $k^2=1$ in comparison with the case where $k^2\neq1$ is that  $a_2$ remains undetermined. The higher coefficients 
$a_3$, $a_4$ and so on are given in terms of $a_2$.

\subsubsection{Large $n$ expansion of (\ref{eq:R:2nd_Ord_Diff_Eqn}) at $k^2=1$}
Setting $k^2=1$ in (\ref{eq:R:2_D:LnExpansion}), $e_{-4}$ and $e_{-3}$ vanish identically, while the equation $e_{-2}=0$ gives
\bea
a_0(4a_0-1)^5&=&0.
\eea
Choosing $a_0=\frac{1}{4}$, we find that $e_{-1}$, $e_0$, $e_1$ and $e_2$ vanish identically. The equation $e_3=0$ implies that
$
a_1=0.
$
Now, setting $a_0=\frac{1}{4}$ and $a_1=0$, $e_4$, $e_5,\dots,e_9$ vanish identically, and the equation $e_{10}=0$ is equivalent to
$
a_3=-2(\al+\bt)a_2.
$
%Since $e_4$, $e_5,\dots,e_9$ vanish identically, we cannot explicitly determine the coefficient $a_2$. We can only determine the higher coefficients 
%$a_3$, $a_4$ and higher in terms of $a_2$.

This procedure easily extends to higher coefficients, for example,
\bea
a_4&=&
\frac{a_2}{4}\Big[12(\al+\bt)^2+1\Big],\\
a_5&=&
-(\al+\bt) a_2\Big[4(\al+\bt)^2+1\Big].
\eea
We see that the above values of $a_4$ and $a_5$ agrees with the those in Table~\ref{T:R:LargeNClassicalCoeffs}.

\subsection{Large \texorpdfstring{$n$}{n} expansion of higher order difference equations for \texorpdfstring{$\bt_n$}{recurrence coefficient}}\label{sec:LargeNExp_btn_Higher_Order}
We can also substitute  (\ref{eq:R:btnExp}) into our third order difference equation for $\bt_n$, (\ref{eq:R:3rd_Ord_D_Eqn_btn_Intro}), and take a large $n$ limit.
Using the same procedure 
that we have used in Section~\ref{sec:R:2_D_btn_Ln_Exp}, having set $a_0(k^2)=\frac{1}{4}$, the first few expansion coefficients $a_j(k^2)$, $j=1,2,\dots,5$ are found to be
%\begin{small}
\bea
\label{eq:R:Ln:3_D:a0}
a_0(k^2)&=&\frac{1}{4},\\
\label{eq:R:Ln:3_D:a1}
a_1(k^2)&=&0,\\
\label{eq:R:Ln:3_D:a2}
a_2(k^2)&=&a_2(k^2),\\
\label{eq:R:Ln:3_D:a3}
a_3(k^2)&=&\frac{a_2\sqrt{(16a_2+4\al^2-1)k^2-4\bt^2}}{\sqrt{k^2-1}}-2(\al+\bt)a_2,\\
\label{eq:R:Ln:3_D:a4}
a_4(k^2)&=& 3a_2(\al+\bt)^2+\frac{3a_2}{k^2-1}\left[\left(4a_2+\al^2-\frac{1}{6}\right)k^2-\bt^2-\frac{1}{12}\right]
\nonumber\\*
&&-\frac{3a_2(\al+\bt)\sqrt{(16a_2+4\al^2-1)k^2-4\bt^2}}{\sqrt{k^2-1}},\\
\label{eq:R:Ln:3_D:a5}
a_5(k^2)&=& -4a_2(\al+\bt)^3-\frac{4a_2(\al+\bt)}{k^2-1}\left[\left(12a_2+3\al^2-\frac{1}{2}\right)k^2-3\bt^2-\frac{1}{4}\right]
\nonumber\\*&&
+\frac{6a_2(\al+\bt)^2\sqrt{(16a_2+4\al^2-1)k^2-4\bt^2}}{\sqrt{k^2-1}}
\nonumber\\*&&
+\frac{a_2}{(k^2-1)^{3/2}}\bigg[\big(6a_2+2\al^2-1\big)k^2-2\bt^2-\frac{1}{2}\bigg]\sqrt{(16a_2+4\al^2-1)k^2-4\bt^2}.
%\nonumber\\
\eea
%\end{small}\noindent
The crucial difference 
in the case of using the third order difference equation (\ref{eq:R:3rd_Ord_D_Eqn_btn_Intro}) in comparison with the second order difference equation (\ref{eq:R:2nd_Ord_Diff_Eqn}) is that  $a_2$ remains undetermined. The higher coefficients 
$a_3$, $a_4$ and so on are given in terms of $a_2$.

Similarly, we can use our fourth order difference equation for $\bt_n$, (\ref{eq:R:Rees4ODEqn}), to calculate the large $n$ expansion of $\bt_n$. In this case we would find that $a_2$ and $a_3$ remains undetermined, with the higher coefficients $a_4$, $a_5$ and so on given in terms of $a_2$ and $a_3$. 

By substituting in for $a_2$ using (\ref{eq:R:Ln:2_D:a2}), we find that $a_3$, $a_4$ and $a_5$, given by (\ref{eq:R:Ln:3_D:a3}), (\ref{eq:R:Ln:3_D:a4}) and (\ref{eq:R:Ln:3_D:a5}) respectively, precisely match with the coefficients $a_3$, $a_4$ and $a_5$ obtained from the expansion of the second order difference equation, given by (\ref{eq:R:Ln:2_D:a3}), (\ref{eq:R:Ln:2_D:a4}) and (\ref{eq:R:Ln:2_D:a5}) respectively.

\subsection{Large \texorpdfstring{$n$}{n} expansion of second order difference equation for \texorpdfstring{$\P_1(n)$}{sub-leading term}}\label{sec:R:2_D_p1n_Ln_Exp}
Using (\ref{eq:R:2D_Eqn_p1n}), we can find the asymptotic expansion of $\P_1(n)$ in powers of $1/n$. We state this in the following lemma:
\begin{lemma}
$\P_1(n)$ has the following asymptotic expansion, for large $n,$
\bea\label{eq:R:p1nExp}
\P_1(n)&=&nb_{-1}(k^2)+\sum\limits_{j=0}^\infty \frac{b_j(k^2)}{n^j},
\qquad \al>-1,\;\;\bt\in\mathbb{R},\;\;k^2\in(0,1).
\eea
The coefficients $b_j(k^2)$, $j=-1,0,1,\dots,$ (which also depend on $\al$ and $\bt$) are explicitly computable. The first few up to $b_5$ are
%\begin{small}
\bea
\label{eq:R:Ln:2_D_p1n:bm1}b_{-1}(k^2)&=&-\frac{1}{4},\\
\label{eq:R:Ln:2_D_p1n:b0}b_0(k^2)&=&
\frac{1}{4}(\al-\bt)+\frac{1}{8}+\frac{\bt(1-\sqrt{1-k^2})}{2k^2},\\
\label{eq:R:Ln:2_D_p1n:b1}b_1(k^2)&=&\frac{1-4\al^2}{16},\\
\label{eq:R:Ln:2_D_p1n:b2}b_2(k^2)&=&\frac{4\al^2-1}{16}\bigg[\al+\bt-\frac{1}{2}-\frac{\bt}{\sqrt{1-k^2}}\bigg],\\
%\eea
%\bea
\label{eq:R:Ln:2_D_p1n:b3}b_3(k^2)&=&
\frac{1-4\al^2}{16}\bigg[\al+\bt-\frac{1}{2}-\frac{\bt}{\sqrt{1-k^2}}\bigg]^2,\\
\label{eq:R:Ln:2_D_p1n:b4}b_4(k^2)&=&
\frac{4\al^2-1}{16}\Bigg[\bigg(\al+\bt-\frac{1}{2}-\frac{\bt}{\sqrt{1-k^2}}\bigg)^3+\frac{(4\al^2-9)\bt k^2}{16(1-k^2)^{3/2}}\Bigg],\\
\label{eq:R:Ln:2_D_p1n:b5}b_5(k^2)&=&
\frac{1-4\al^2}{16}
\bigg(\al+\bt-\frac{1}{2}-\frac{\bt}{\sqrt{1-k^2}}\bigg)
\Bigg[\bigg(\al+\bt-\frac{1}{2}-\frac{\bt}{\sqrt{1-k^2}}\bigg)^3+\frac{(4\al^2-9)\bt k^2}{4(1-k^2)^{3/2}}\Bigg].
%\nonumber\\
\eea
%\end{small} 
\end{lemma}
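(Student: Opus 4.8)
The plan is to proceed exactly as in the proof of the large-$n$ expansion of $\bt_n$ that culminates in \eqref{eq:R:btnExp_Lemma}: substitute the ansatz \eqref{eq:R:p1nExp} into the second-order difference equation \eqref{eq:R:2D_Eqn_p1n}, expand the shifted quantities $\P_1(n\pm1)=(n\pm1)b_{-1}+b_0+\sum_{j\ge1}b_j(n\pm1)^{-j}$ in powers of $1/n$, collect the whole left-hand side as a single Laurent series $\sum_j e_j\,n^{-j}$ in $n$ (its top degree arising from the cubic-in-$n$ prefactors multiplied by the cubic-in-$\P_1$ brackets), and impose $e_j=0$ for every $j$. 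Since \eqref{eq:R:2D_Eqn_p1n} is polynomial in $\P_1(n+1),\P_1(n),\P_1(n-1)$ with coefficients polynomial in $n$, this yields a hierarchy of algebraic equations solvable recursively for $b_{-1},b_0,b_1,\dots$. The bookkeeping is heavy and I would delegate the collection and elimination to MAPLE, just as was done for \eqref{eq:R:2nd_Ord_Diff_Eqn} in Section~\ref{sec:R:2_D_btn_Ln_Exp}.

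First I would read off $b_{-1}$ from the highest orders in $n$; this must be compatible with $\lim_{n\to\infty}\bt_n=\tfrac14$ through the exact relation $\bt_n=\P_1(n)-\P_1(n+1)$ of \eqref{def:p1nbetan}, which forces $b_{-1}=-\tfrac14$. The genuinely new quantity is the constant $b_0$: I expect the next order to be a \emph{quadratic}, hence sign-ambiguous, equation whose relevant root, selected by regularity and the correct $k^2\to0$ limit, is $b_0=\tfrac14(\al-\bt)+\tfrac18+\tfrac{\bt(1-\sqrt{1-k^2})}{2k^2}$. Thereafter I expect each successive order $e_j=0$ to be \emph{linear} in the next unknown $b_j$ (its quadratic part already being fixed by the leading data), so that $b_1,b_2,\dots$ are peeled off one at a time; the radicals $\sqrt{1-k^2}$ that enter from $b_0$ onward have their branch pinned down, at each stage, by matching the $k^2\to0,\ \bt\to0$ Jacobi specialisation in Table~\ref{T:R:LargeNClassicalCoeffs}, exactly as the sign of $a_3$ was fixed in \eqref{eq:R:Ln:2_D:a3pm}.

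In parallel I would run an independent consistency check that also explains precisely which coefficients are routine and which carries the real content. Differencing the ansatz gives $\P_1(n)-\P_1(n+1)=-b_{-1}+\sum_{j\ge1}b_j\big(n^{-j}-(n+1)^{-j}\big)$, and matching this against the already-established expansion \eqref{eq:R:btnExp_Lemma}, \eqref{eq:R:Ln:2_D:a3}--\eqref{eq:R:Ln:2_D:a6} of $\bt_n$ recovers $b_{-1}=-\tfrac14$ together with all $b_j$ for $j\ge1$ recursively (for instance $a_2=b_1$ and $a_3=2b_2-b_1$), which one verifies reproduce \eqref{eq:R:Ln:2_D_p1n:b1}--\eqref{eq:R:Ln:2_D_p1n:b5}. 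Crucially, this finite-difference relation is \emph{insensitive} to $b_0$, since only differences of $\P_1$ occur; it therefore determines everything except the summation constant $b_0$. That is exactly why the full equation \eqref{eq:R:2D_Eqn_p1n} is needed: its dependence on $\P_1(n)$ itself, and not merely on differences of $\P_1$, is what breaks the translation ambiguity and fixes $b_0$.

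The hard part will therefore be twofold, and both parts are branch/bookkeeping issues rather than conceptual ones. First, correctly isolating $b_0$: this requires carrying the expansion to the order at which $\P_1(n)$ (as opposed to its differences) first contributes nontrivially, and then choosing the correct branch of the resulting quadratic. Second, keeping the large expressions and the nested $\sqrt{1-k^2}$ radicals under control while extracting each $e_j$. Both are dispatched by a computer-algebra collection and elimination identical in spirit to Section~\ref{sec:R:2_D_btn_Ln_Exp}, with the $k^2\to0$ limit and Table~\ref{T:R:LargeNClassicalCoeffs} serving as the sign-fixing and verification tool at every step.
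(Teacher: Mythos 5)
Your proposal is correct and follows essentially the same route as the paper: substitute the ansatz \eqref{eq:R:p1nExp} into the second-order difference equation \eqref{eq:R:2D_Eqn_p1n}, expand in powers of $1/n$ (the paper records the result as $l_{-2}n^2+l_{-1}n+\sum_{j\ge 0}l_j n^{-j}=0$), and solve the resulting hierarchy recursively with signs of radicals fixed by the $k^2\to0$ specialisation, exactly as in Section~\ref{sec:R:2_D_btn_Ln_Exp}. Your consistency check via $\bt_n=\P_1(n)-\P_1(n+1)$ --- and the observation that it determines every $b_j$ except the summation constant $b_0$, which is why the full equation \eqref{eq:R:2D_Eqn_p1n} is indispensable --- is also carried out by the paper immediately after the lemma.
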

\begin{proof}
We substitute (\ref{eq:R:p1nExp}) into our second order difference equation for $\P_1(n)$, (\ref{eq:R:2D_Eqn_p1n}), 
which gives rise to
\bea
l_{-2}n^2+l_{-1}n+\sum\limits_{j=0}^\infty \frac{l_j}{n^j}=0,
\eea
in a large $n$ expansion. The coefficients $l_j$ depend on the expansion coefficients $b_j$, $\al$, $\bt$ and $k^2$. We then use  the same procedure
as in Section~\ref{sec:R:2_D_btn_Ln_Exp} to find 
the first few expansion coefficients $b_j$, $j=-1,0,1,\dots,5$ for $k^2\in(0,1)$. \end{proof}
For consistency, we can replace $\P_1(n)$ and $\P_1(n+1)$ in $\bt_n=\P_1(n)-\P_1(n+1)$ by their large $n$ expansion~(\ref{eq:R:p1nExp}), where we set $n=n$ and $n\to n+1$ respectively. Then, for large $n$,
the recurrence coefficient $\beta_n$ admits the following asymptotic expansion,
%\begin{footnotesize}
\bea
\bt_n&=& %\P_1(n)-\P_1(n+1),
%\nonumber\\*
%&=&
\frac{1}{4}+\frac{1-4\al^2}{16n^2}
+\frac{(4\al^2-1)}{8n^3} 
\bigg[ \al+\beta -\frac{\beta}{\sqrt {1-{k}^2} }\bigg]+{\rm{O}}\left(\frac{1}{n^4}\right).
\eea
%\end{footnotesize}
We find that this agrees precisely with the expansion for $\bt_n$ obtained in Section~\ref{sec:R:2_D_btn_Ln_Exp}.

\section{Large \texorpdfstring{$n$}{n} expansion of the Hankel determinant}\label{Sec:R:Hankel_LargeN}
In this section we prove Theorem~\ref{thm:R:Dn_LargeNExp_Intro} and compute the large $n$ expansion of our
Hankel determinant
\bea\label{eq:R:Dn_Def_LargeN_Intro}
D_n[w(\cdot,k^2)]=\det\left(\int\limits_{-1}^{1}x^{i+j}w(x,k^2)dx\right)_{i,j=0}^{n-1},
\eea
and we recall for the Reader our weight function,
\bea
w(x,k^2)&=&(1-x^2)^{\al}(1-k^2x^2)^{\bt},\qquad x\in[-1,1],\;\;\al>-1,\;\;\bt\in\mathbb{R},\;\;k^2\in(0,1).\nonumber
\eea
\subsection{Leading term of asymptotic expansion}
We describe here a ``linear statistics" approach to compute $D_n[w(\cdot,k^2)]$ for large $n$, which gives the leading term
of the asymptotic expansion. As this can be obtained using existing results, obtained in \cite{ChenLawrence1997} and
\cite{BasorChen2005},  we will be brief. The idea is to re-write our Hankel determinant as
$$
D_n[w^{(\al,\al)}]\cdot\frac{D_n[w(\cdot,k^2)]}{D_n[w^{(\al,\al)}]},
$$
where $D_n[w^{(\al,\al)}]$ is the Hankel determinant generated by a special case of the Jacobi weight,
$w^{(\al,\al)}(x)=(1-x^2)^{\al},\;\;x\in[-1,1],\;\;\al>-1$. A closed-form expression for this maybe obtained
\cite{BasorChen2005}; the leading term of its large $n$ expansion reads,
$$
D_n[w^{(\al,\al)}]\sim (2\pi)^n\;n^{\al^2-1/4}\;2^{-n(n+\al)}\:\frac{(2\pi)^{\al}\pi^{1/2}[G(1/2)]^2}{2^{2\al^2}}[G(1+\al)]^2.
$$
A straightforward application of the results obtained in \cite{ChenLawrence1997} gives the following leading term for large $n:$
$$
\frac{D_n[w(\cdot,k^2)]}{D_n[w^{(\al,\al)}]}\sim \left(\frac{1+\sqrt{1-k^2}}{2}\right)^{2\bt\;n}
\;\left(\frac{1+\sqrt{1-k^2}}{2\sqrt{1-k^2}}\right)^{2\al\bt}\;\left[\frac{(1+\sqrt{1-k^2})^2}{4\sqrt{1-k^2}}\right]^{\bt^2}.
$$
In the above constant the function $G$ refers to the Barnes $G$-function, an entire function that satisfies the recurrence relation
$$
G(z+1)=\Gamma(z)G(z),\qquad G(1)=1.
$$

An alternative approach is described below. This is done  by reducing (\ref{eq:R:Dn_Def_LargeN_Intro}) to another expansion of 
a certain Toeplitz plus Hankel determinant and
then using \cite{DeiftItsKrasovsky2011}. %But it also follows directly two other %ways, one is to use the results directly that are in
%\cite{ChenLawrence1997} or the ones that are in \cite{BasorChen2005} to %also obtain the highest order terms in the asymptotic expansion.

Let us define, for $b(x)$ an even function on~$[-1,1],$ the following determinant
\bea
\det H_n[b]=\det\left(\frac{1}{\pi}\int\limits_{-1}^{1}\!b(x)(2x)^{j+k}\,dx\right)_{j,k=0}^{n-1}.
\eea
An easy computation shows that
\bea
\det H_n[b]=\frac{2^{n(n-1)}}{\pi^n}\det\left(\int\limits_{-1}^{1}\!x^{j+k}b(x)\,dx\right)_{j,k=0}^{n-1}.
\eea
Therefore we would like to compute this determinant with $b(x)=w(x,k^2).$  From the computations in \cite{BasorChenEhrhardt}, we have that
\bea
\det H_n[b]=2^{-2n\al}\det (T_n(a)+H_n(a)),
\eea
where
\bea
a(\theta)=(1-k^2\cos^2\theta)^{\bt}(2-2\cos\theta)^{\al+1/2}(2+2\cos\theta)^{\al-1/2},\qquad0<\theta\leq\pi.
\eea
The above is a finite determinant of Toeplitz plus Hankel matrices.
So the large $n$ behavior of $D_n[w(\cdot,k^2)]$ can be obtained from the large $n$
behavior of $\det H_n[w(\cdot,k^2)],$ up to the factor
$$
2^{-n(n-1)}\pi^n.
$$
To compute the determinant of the Toeplitz plus Hankel matrices, and since  $a$  is
even in $\theta$ we make use of a result presented in \cite{DeiftItsKrasovsky2011} %or the linear statistics formula of \cite{ChenLawrence1997} 
to obtain,
\bea
\det (T_n(a)+H_n(a))\sim {\rm e}^{nG}\;n^{p}\;E,
\eea
where $G,$ $p$ and $E$ are expressed in terms of the parameters of our weight, $\al,\;\bt\;$ and $k^2$.

We have
\bea
{\rm e}^{nG}:=\left(\frac{1+\sqrt{1-k^2}}{2}\right)^{2\bt\;n},
\eea
\bea
p:=\al^2-1/4,
\eea
and
\bea\label{eq:R:E_Leading_Term_Dn}
E:=\frac{(2\pi)^{\al}\pi^{1/2}[G(1/2)]^2}{2^{2\al^2}\;[G(1+\alpha)]^2}\;
\left(\frac{1+\sqrt{1-k^2}}{2\sqrt{1-k^2}}\right)^{2\al\bt}
\left[\frac{(1+\sqrt{1-k^2})^{2}}{4\sqrt{1-k^{2}}}\right]^{\bt^2}.
\eea
%In the above constant the function $G$ refers to the Barnes G-function, an entire function that satisfies
%$$G( z+1) = \Gamma(z) G(z),\qquad G(1):=1.$$
This means that, as $n\rightarrow\infty$,
\bea\label{eq:dnlargen}
D_n[w(\cdot,k^2)]\sim (2\pi)^n\;n^{\al^2-1/4}\;2^{-n(n+2\al)}\;\left(\frac{1+\sqrt{1-k^2}}{2}\right)^{2\bt\;n}\;E.
\eea
\subsection{Correction terms of asymptotic expansion}
We now supply the ``correction terms" using the large $n$ expansion of $\bt_n.$ Recall that
$$
\bt_n=\frac{1}{4} +\frac{a_2(\al,\bt,k^2)}{n^2}+\frac{a_3(\al,\bt,k^2)}{n^3}+{\rm O}\left(\frac{1}{n^4}\right)
$$
as $n\rightarrow\infty.$ Let
$$
F_n:=-\log\;D_n,
$$
be the ``Free Energy". Since
$$
\bt_n=\frac{D_{n+1}D_{n-1}}{D_n^2},
$$
we have
\bea\label{eq:R:-logbtn_2_D_Eqn_Fn}
-\log\bt_n=F_{n+1}-F_n - (F_n- F_{n-1}).
\eea
Following \cite{ChenIsmail1997T,ChenIsmailVanAssche1998}, for large $n$, we can approximate the above second order difference by
\bea\label{eq:R:CF:logbtn(F)}
-\log\bt_n&=&\frac{\partial^2 F_n}{\partial n^2}+\frac{1}{12}\frac{\partial^4 F_n}{\partial n^4}+{\rm O}\left(\frac{\partial^6 F_n}{\partial n^6}\right).
\eea
We assume that $F_n$ has the following expansion, for large $n,$
\bea\label{eq:R:FnExp}
F_n&=&C(k^2,\al,\bt)\log n+\sum\limits_{j=-2}^\infty \frac{c_j(k^2,\al,\bt)}{n^j},
\qquad \al>-1,\;\;\bt\in\mathbb{R},\;\;k^2\in(0,1).
\eea
Ignoring ${\rm O}\left(\frac{\partial^6 F_n}{\partial n^6}\right)$ terms in (\ref{eq:R:CF:logbtn(F)}), we now substitute (\ref{eq:R:FnExp}) and the asymptotic expansion of $\bt_n$ into (\ref{eq:R:CF:logbtn(F)}).
This give rise to
\bea
m_{0}+\sum\limits_{j=1}^\infty \frac{m_j}{n^j}=0,
\eea
in a large $n$ expansion. The coefficients $m_j$ depends on the expansion coefficients $C$, $c_j$ and $a_j(k^2,\al,\bt)$. Using the same procedure
that we have used in Section \ref{sec:R:2_D_btn_Ln_Exp},
the first few expansion coefficients $C$ and $c_j$, $j=-2,-1,0,\dots,3$ are found to be
\begin{align}
C&= 4a_2,&
c_{-2}&= \log 2,&
c_{-1}&=c_{-1},&
c_{0}&=c_{0},&
\nonumber\\&&
c_{1}&=-2a_3,&
c_{2}&=-\frac{2a_4-a_2(4a_2+1)}{3},&
c_{3}&=-\frac{a_5-a_3(4a_2+1)}{3},
\end{align}
where $a_2(k^2,\alpha,\beta)$---$a_5(k^2,\alpha,\beta)$ can be found in Section \ref{sec:R:2_D_btn_Ln_Exp}. 

Using this method, the expansion for the Free Energy{\textemdash}and hence the Hankel determinant or the Partition Function, $D_n$,{\textemdash}contains two undetermined constants $c_{-1}$ and $c_0$, which can only be determined from (\ref{eq:dnlargen}). 
Incorporating the leading term of the large $n$ expansion from (\ref{eq:dnlargen}), the complete asymptotic expansions reads,
\bea\label{eq:R:Dn_Ln_Exp_TH}
D_n[w(\cdot,k^2)]&\sim&
\exp\bigg[-n^2\log\;2 + c_{-1}\;n - 4a_2 \log\;n + c_{0}+\frac{2a_3}{n}
\nonumber\\&&
\qquad + \frac{2a_4-a_2(4a_2+1)}{3n^2}
+\frac{a_5-a_3(4a_2+1)}{3n^3}+{\rm O}\left(\frac{1}{n^4}\right)\;\bigg],
\eea
where
\bea
{\rm e}^{c_0}=E,
\eea
and
\bea
c_{-1}=2\bt\;\log\left(\frac{1+\sqrt{1-k^2}}{2}\right)+\log\pi+(1-2\al)\:\log2.
\eea

To calculate higher order terms in (\ref{eq:R:Dn_Ln_Exp_TH}), we would require ${\rm O}\left(\frac{\partial^6 F_n}{\partial n^6}\right)$ correction terms in (\ref{eq:R:CF:logbtn(F)}). Notice that we could have easily approximated the second order difference in (\ref{eq:R:-logbtn_2_D_Eqn_Fn})
by
$
\frac{\partial^2F_n}{\partial n^2},
$
for large $n.$ We could then integrate $\log\bt_n$ twice with respect to $n$ to recover the $F_n$ and hence
$D_n$, in a large $n$ expansion. However, this would mean that our asymptotic expansion for $D_n$ would only be valid up to and including the ${\rm O}\left(\frac{1}{n}\right)$ term. This is because we obtain $c_2$ in terms of the coefficients $a_2$ and $a_4$ from the equation $m_4=0$, which has contributions from the ${\rm O}\left(\frac{\partial^4 F_n}{\partial n^4}\right)$ term.

\section{Alternative computation of large \texorpdfstring{$n$}{n} expansion for \texorpdfstring{$D_n$}{Hankel determinant}}\label{Sec:R:Hankel_LargeN_Toda}
In this section we give an alternative computation of the large $n$ expansion of our
Hankel determinant
\bea\label{eq:R:Dn_Def_LargeN_Intro_2}
D_n[w(\cdot,k^2)]=\det\left(\int\limits_{-1}^{1}x^{i+j}
(1-x^2)^{\al}(1-k^2x^2)^{\bt}dx\right)_{i,j=0}^{n-1}, \qquad
\al>-1,\;\; \bt\in\mathbb{R},\;\; k^2\in(0,1),
\eea
using results obtained from the ladder operators. We combine the large $n$ expansion for $\textsf{p}_1(n)$ with Toda-type time-evolution equations satisfied by $D_n$ (to be presented in due course).

We restate Theorem \ref{thm:R:Dn_LargeNExp_Intro} as the following:
\begin{theorem}\label{thm:Large_n_Exp_Dn}
The ratio $D_n[w(\cdot,k^2)]/D_n[w^{(\al,\al)}(\cdot)]$ has an asymptotic expansion in $n$ of the form %\begin{footnotesize}
\bea\label{eq:R:Dn(k2)/D_n(0)_Large_n}
\frac{D_n[w(\cdot,k^2)]}{D_n[w^{(\al,\al)}(\cdot)]}&=&
\frac{\Big(1+\sqrt{1-k^2}\Big)^{2\bt(n+\al+\bt)}}{2^{2\bt(n+\al+\bt)}\Big(1-k^2\Big)^{\bt(\al+\bt/2)}}
\exp
\Bigg[
\frac{(4\al^2-1)\bt}{4n}\bigg(1-\frac{1}{\sqrt{1-k^2}}\bigg)
\nonumber\\*&&
\qquad
-\frac{(4\al^2-1)\bt}{8n^2}\bigg(2\al+\bt-\frac{2(\al+\bt)}{\sqrt{1-k^2}}+\frac{\bt}{1-k^2}\bigg)+{\rm O}\left(\frac{1}{n^3}\right)
\Bigg].
%\nonumber\\
\eea
%\end{footnotesize}
Hence, the Hankel determinant $D_n[w(\cdot,k^2)]$ has the following asymptotic expansion in $n$:
\bea\label{eq:R:Dn(k2)_Large_n}
D_n[w(\cdot,k^2)]
&=&
E\;n^{\al^2-1/4}\; 2^{-n(n+2\al)}\;(2\pi)^{n}\;\left(\frac{1+\sqrt{1-k^2}}{2}\right)^{2\bt n}
\nonumber\\*&&
\times\;\exp
\Bigg[
\frac{2a_3}{n}
+\frac{2a_4-a_2\Big(4a_2+1\Big)}{3n^2}
%\nonumber\\*&&
%\qquad\quad\;
+\frac{a_5-a_3\Big(4a_2+1\Big)}{3n^3}+{\rm O}\left(\frac{1}{n^4}\right)
\Bigg],
%\nonumber\\
\eea
where the $n$-independent constant $E$ is given by
\bea
E&:=&
\frac{(2\pi)^{\al}\pi^{1/2}[G(1/2)]^2}{2^{2(\al^2+\al\bt+\bt^2)}[G(1+\al)]^2}\cdot\frac{\left(1+\sqrt{1-k^2}\right)^{2\bt(\al+\bt)}}{(1-k^2)^{\bt(\al+\bt/2)}}.
\eea
In the above, the coefficients $a_2(k^2,\alpha,\beta)$---$a_5(k^2,\alpha,\beta)$ can be found in Section \ref{sec:R:2_D_btn_Ln_Exp}. 
\end{theorem}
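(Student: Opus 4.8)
The plan is to derive a first-order differential equation in $k^2$ for $\log D_n[w(\cdot,k^2)]$---a Toda-type ``time evolution''---and then integrate it in $k^2$ using the large $n$ expansions of $\bt_n$ and $\P_1(n)$ already obtained in Sections \ref{sec:R:2_D_btn_Ln_Exp} and \ref{sec:R:2_D_p1n_Ln_Exp}. First I would differentiate $h_n=\int_{-1}^1 P_n^2(x)w(x,k^2)\,dx$ with respect to $k^2$. Since $P_n$ is monic, $\frac{d}{dk^2}P_n$ has degree at most $n-1$, so orthogonality kills the cross term and only $\frac{d}{dk^2}w=-\bt x^2(1-k^2x^2)^{-1}w$ survives. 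Writing $\frac{x^2}{1-k^2x^2}=\frac{1}{k^2}\big((1-k^2x^2)^{-1}-1\big)$, decomposing $(1-k^2x^2)^{-1}$ into partial fractions at $x=\pm 1/k$, and invoking the evenness relation (\ref{eq:R:Sym_rel_1}), the integral collapses onto the auxiliary variable $R_n^{\ast}$ of (\ref{eq:R:AuxVar}). This gives $\frac{d}{dk^2}\log h_n=\frac{\bt}{k^2}-\frac{R_n^{\ast}}{k^3}$, and summing over $j=0,\dots,n-1$ via $D_n=\prod_j h_j$ yields the time-evolution equation
\begin{equation}
\frac{d}{dk^2}\log D_n[w(\cdot,k^2)]=\frac{n\bt}{k^2}-\frac{1}{k^3}\sum_{j=0}^{n-1}R_j^{\ast}.
\end{equation}

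Next I would turn the right-hand side into a closed expression in $\bt_n$ and $\P_1(n)$. Eliminating $\sum_{j=0}^{n-1}R_j^{\ast}$ in favour of $\sum_{j=0}^{n-1}R_j$ through (\ref{eq:R:SumRjRjast}) recasts the evolution equation as $\frac{1}{k^2}\big[\sum_{j=0}^{n-1}R_j-n(\al+n/2)\big]$; then Theorem \ref{lem:R:SumR(btnrn)} expresses $\sum_{j=0}^{n-1}R_j$ in terms of $r_n$ and $\bt_n$, and (\ref{eq:R:p1n(rnbtn)}) expresses $r_n$ in terms of $\bt_n$ and $\P_1(n)$. The outcome is $\frac{d}{dk^2}\log D_n$ written entirely through $\bt_n$, $\P_1(n)$, $n$, $\al$, $\bt$ and $k^2$. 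Substituting the series (\ref{eq:R:btnExp_Lemma}) for $\bt_n$ and (\ref{eq:R:p1nExp}) for $\P_1(n)$ produces a large $n$ expansion $\frac{d}{dk^2}\log D_n=\sum_{m\ge -2}\tilde f_m(k^2)\,n^{-m}$ whose coefficients $\tilde f_m(k^2)$ are explicit functions of $k^2$ (and $\al,\bt$). A useful internal check is that the $n^2$ coefficient must vanish identically, reflecting the $k^2$-independence of the leading factor $2^{-n(n+2\al)}$ in (\ref{eq:dnlargen}).

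I would then integrate this expansion term by term in $k^2$ from $0$ to $k^2$. Because $w(x,0)=(1-x^2)^{\al}=w^{(\al,\al)}(x)$, the integration constant at each order in $1/n$ is fixed by the requirement that the ratio equal $1$ at $k^2=0$, so integration directly delivers $\log\big(D_n[w(\cdot,k^2)]/D_n[w^{(\al,\al)}]\big)$, namely (\ref{eq:R:Dn(k2)/D_n(0)_Large_n}). Finally I multiply this ratio by the asymptotics of $D_n[w^{(\al,\al)}]$, whose leading term is recorded in Section \ref{Sec:R:Hankel_LargeN} (from \cite{BasorChen2005,ChenLawrence1997,DeiftItsKrasovsky2011}) and whose correction terms are the $k^2=0$ specialisation of the Free-Energy computation of Section \ref{Sec:R:Hankel_LargeN}. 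At order $n^{-1}$ the ratio contributes $2a_3(k^2)-2a_3(0)$ while $D_n[w^{(\al,\al)}]$ supplies $2a_3(0)$, and analogous combinations occur at $n^{-2}$ and $n^{-3}$, so the two sources add up to the block $\frac{2a_3}{n}+\frac{2a_4-a_2(4a_2+1)}{3n^2}+\frac{a_5-a_3(4a_2+1)}{3n^3}$ with the general coefficients $a_j(k^2)$; one also checks that the $n$-independent constant assembles into $E$, giving (\ref{eq:R:Dn(k2)_Large_n}) and hence Theorem \ref{thm:R:Dn_LargeNExp_Intro}.

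The main obstacle will be the $k^2$-integration at the leading orders in $n$: the integrand carries a factor $1/k^2$ together with algebraic functions of $\sqrt{1-k^2}$, and one must verify both that the apparent singularity at $k^2=0$ cancels (so the integrals converge) and that the elementary antiderivatives reorganise into the closed forms $\big(1+\sqrt{1-k^2}\big)^{2\bt(n+\al+\bt)}$, $2^{-2\bt(n+\al+\bt)}$ and $(1-k^2)^{-\bt(\al+\bt/2)}$ appearing in (\ref{eq:R:Dn(k2)/D_n(0)_Large_n}). A secondary technical point is justifying the term-by-term integration in $k^2$ of the asymptotic expansion, which one may take on the grounds that the very existence of these expansions, with $k^2$-dependent coefficients, is guaranteed by \cite{KuijlaarsMcVaVan2004}.
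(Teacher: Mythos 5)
Your proposal follows essentially the same route as the paper: the identity $k^2\frac{d}{dk^2}\log h_n = R_n-n-\al-\frac{1}{2}$ (equivalent to your $\frac{d}{dk^2}\log h_n=\bt/k^2-R_n^{\ast}/k^3$ via (\ref{eq:R:RnRnastv1})) is summed over $j$ and reduced through Theorem~\ref{lem:R:SumR(btnrn)} and (\ref{eq:R:p1n(rnbtn)}) to exactly the paper's Lemma~\ref{lem:R:Hn(p1np1n+1)}, after which the large-$n$ series for $\P_1(n)$ is inserted, the result integrated in $k^2$ from $0$ with the normalisation $w(x,0)=w^{(\al,\al)}(x)$, and finally multiplied by the asymptotics of $D_n[w^{(\al,\al)}]$. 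The only minor variation is that the paper sources the correction terms of $D_n[w^{(\al,\al)}]$ from the exact Barnes $G$-function formula (\ref{eq:R:D_n(0)_Exact}) together with (\ref{def:Barnes_G_asympt}) rather than by specialising the free-energy computation of Section~\ref{Sec:R:Hankel_LargeN} to $k^2=0$; both yield the same expansion, as the paper itself notes.
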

Note that the large $n$ expansion for $D_n[w(\cdot,k^2)]$, equation (\ref{eq:R:Dn(k2)_Large_n}), precisely agrees with the result obtained from Section~\ref{Sec:R:Hankel_LargeN}, equation (\ref{eq:R:Dn_Ln_Exp_TH}).

To prove Theorem~\ref{thm:Large_n_Exp_Dn}, we first need to find the Toda-type time-evolution equations for our Hankel determinant.
\subsection{Toda evolution}
In this section, $n$ is kept fixed while we vary the parameter $k^2$ in the weight function (\ref{defn:R:w(xk2)}). The other parameters $\al$ and $\bt$ are also kept fixed.

Differentiating the definition of $h_n$, equation (\ref{def:hn}), w.r.t. $k^2$, and integrating by parts, we have
\bea
\label{eq:R:dk2loghn}k^2\frac{d}{dk^2}\log h_n&=&
R_n-n-\al-\frac{1}{2}.
\eea
From (\ref{def:betan}), i.e. $\bt_n=h_n/h_{n-1}$, this implies that
\bea
\label{eq:R:dk2logbtn}k^2\frac{d}{dk^2}\log \bt_n&=&R_n-R_{n-1}-1.
\eea

%From the above equation, we can find a differential-difference equation %satisfied by $\bt_n$.
%\begin{lemma}
%The recurrence coefficient $\bt_n$ satisfies the following differential-difference %equation:
%\begin{small}
%\bea\label{eq:R:d-d-eqn(btn)}
%(k^2-1)\frac{d}{dk^2}\bt_n&=&
%\Big(\al+\bt+n+\frac{3}{2}\Big)\bt_{n+1}\bt_n-\Big(\al+\bt+n-\frac{3}{2}\Big)\bt_n\bt_{n-1}+\bt_n(\bt_n-1).
%\qquad
%%\nonumber\\
%\eea
%\end{small}
%\end{lemma}
%\begin{proof}
%This follows from the method of proof used in Theorem \ref{thm:R:p1(n)^2Eqn}.
%We use (\ref{sys:R:MCS1.1}) to eliminate $R_n$ and $R_{n-1}$ in (\ref{eq:R:dk2logbtn}), %after which we use (\ref{eq:R:rnm1(bti)}) and (\ref{eq:R:rnp1(bti)}) to %eliminate $r_{n-1}$ and $r_{n+1}$ respectively.
%\end{proof}
\subsection{Toda evolution of Hankel determinant}
In this section, we describe how we can express the logarithmic derivative of the Hankel determinant $D_n$ in terms of $\P_1(n)$ and $\P_1(n+1)$. This is established in the following lemma:

\begin{lemma}\label{lem:R:Hn(p1np1n+1)}
We define the quantity $H_n(k^2)$ through the Hankel determinant $D_n(k^2):=D_n[w(\cdot,k^2)]$ as
\bea\label{eq:R:Hn_def}
H_n(k^2)&=&k^2(k^2-1)\frac{d}{dk^2}\log D_n(k^2).
\eea
Then we may express $H_n(k^2)$ in terms of $\P_1(n)$ and $\P_1(n+1)$ as
\bea\label{eq:R:Hn(p1np1n+1)}
H_n(k^2)&=&
-k^2\left(\al+\bt+n-\frac{1}{2}\right)\P_{1}(n)-k^2\left(\al+\bt+n+\frac{1}{2}\right)\P_1(n+1)-\frac{n^2k^2}{2}.\qquad\;\;%\nonumber\\
\eea
\end{lemma}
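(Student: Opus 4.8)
The plan is to exploit the product representation $D_n=\prod_{j=0}^{n-1}h_j$ from (\ref{def:prodhn}), which converts the logarithmic $k^2$-derivative of the determinant into a sum over the individual norms $h_j$. First I would take logarithms and differentiate, obtaining $k^2\frac{d}{dk^2}\log D_n=\sum_{j=0}^{n-1}k^2\frac{d}{dk^2}\log h_j$, and then invoke the Toda-type identity (\ref{eq:R:dk2loghn}), applied with $n$ replaced by $j$, to replace each summand by $R_j-j-\al-\frac{1}{2}$. Evaluating the elementary arithmetic sum $\sum_{j=0}^{n-1}\left(j+\al+\frac{1}{2}\right)=\frac{n^2}{2}+n\al$ then gives $k^2\frac{d}{dk^2}\log D_n=\sum_{j=0}^{n-1}R_j-\frac{n^2}{2}-n\al$, so that multiplying by $(k^2-1)$ expresses $H_n$ entirely through the single sum $\sum_{j=0}^{n-1}R_j$.

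The heart of the computation is to evaluate that sum in closed form, and for this I would apply Theorem~\ref{lem:R:SumR(btnrn)}, which already expresses $\frac{1}{2}\sum_{j=0}^{n-1}R_j$ in terms of $\bt_n$ and the auxiliary variable $r_n$. Substituting this into the expression for $H_n$ and clearing the factor $(k^2-1)$ neatly removes the apparent singularity at $k^2=1$ that is visible in Theorem~\ref{lem:R:SumR(btnrn)}. To eliminate the remaining auxiliary variable I would then insert (\ref{eq:R:p1n(rnbtn)}), namely $(k^2-1)r_n=\frac{n}{2}-k^2\left(\al+\bt+n+\frac{1}{2}\right)\bt_n+k^2\P_1(n)$, which trades $r_n$ for $\bt_n$ and $\P_1(n)$.

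At this point the result is a polynomial in $\bt_n$ and $\P_1(n)$ with coefficients depending on $n$, $\al$, $\bt$ and $k^2$, and the only real work is the cancellation. One checks that the $\bt_n$ contributions combine through $(\al+\bt+n)-\left(\al+\bt+n-\frac{1}{2}\right)=\frac{1}{2}$ into the single term $k^2\left(\al+\bt+n+\frac{1}{2}\right)\bt_n$, while all purely $n$- and $\al$-dependent pieces cancel against $-(k^2-1)\left(\frac{n^2}{2}+n\al\right)$ except for $-\frac{k^2n^2}{2}$. This produces the intermediate form $H_n=k^2\left(\al+\bt+n+\frac{1}{2}\right)\bt_n-2k^2(\al+\bt+n)\P_1(n)-\frac{k^2n^2}{2}$.

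The final step is cosmetic: using (\ref{def:p1nbetan}) in the form $\bt_n=\P_1(n)-\P_1(n+1)$, I would substitute for $\bt_n$ and regroup, so that the coefficient of $\P_1(n)$ becomes $\left(\al+\bt+n+\frac{1}{2}\right)-2(\al+\bt+n)=-\left(\al+\bt+n-\frac{1}{2}\right)$, while the term carrying $\P_1(n+1)$ is $-k^2\left(\al+\bt+n+\frac{1}{2}\right)\P_1(n+1)$. This yields exactly the symmetric expression claimed in (\ref{eq:R:Hn(p1np1n+1)}). I expect no conceptual obstacle; the main difficulty is purely the bookkeeping of the cancellation, and the entire substance of the argument is carried by the product formula, the single-norm identity (\ref{eq:R:dk2loghn}), and the sum evaluation in Theorem~\ref{lem:R:SumR(btnrn)}.
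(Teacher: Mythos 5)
Your proposal is correct and follows essentially the same route as the paper's proof: product formula $D_n=\prod_{j}h_j$ plus (\ref{eq:R:dk2loghn}), then Theorem~\ref{lem:R:SumR(btnrn)} to evaluate $\sum_j R_j$, then (\ref{eq:R:p1n(rnbtn)}) to eliminate $r_n$, and finally $\bt_n=\P_1(n)-\P_1(n+1)$. Your intermediate form $H_n=k^2\left(\al+\bt+n+\tfrac{1}{2}\right)\bt_n-2k^2(\al+\bt+n)\P_1(n)-\tfrac{n^2k^2}{2}$ and the cancellations you describe match the paper's computation exactly.
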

\begin{proof}
Using (\ref{def:prodhn}), i.e. $D_n=\prod\limits_{j=0}^{n-1}\!h_j$, and (\ref{eq:R:dk2loghn}), we can write the logarithmic derivative of the Hankel determinant $D_n$ as
\bea
H_n(k^2)
%=k^2(k^2-1)\frac{d}{dk^2}\log D_n(k^2)
&=&k^2(k^2-1)\frac{d}{dk^2}\sum\limits_{j=0}^{n-1}\log h_j,
\nonumber\\
&=&(k^2-1)\sum_{j=0}^{n-1}R_j-n\left(\al+\frac{n}{2}\right)(k^2-1).
\eea
We replace $\sum\limits_{j=0}^{n-1}R_j$ using (\ref{eq:R:SumR(btnrn)}) to arrive at
\begin{footnotesize}
\bea
\frac{H_n(k^2)}{2}&=&
-(k^2-1)(\alpha+\beta+n)r_n
-k^2\left(\al+\bt+n+\frac{1}{2}\right)\left(\al+\bt+n-\frac{1}{2}\right)\beta_n
+\frac{n}{2}\left(\alpha+\beta+n-\frac{nk^2}{2}\right),\\
&=&\frac{k^2}{2}\left(\al+\bt+n+\frac{1}{2}\right)\bt_n-k^2(\al+\bt+n)\P_1(n)-\frac{n^2k^2}{4},
\eea
\end{footnotesize}\noindent
where the second equality follows from eliminating $r_n$ in favor of $\bt_n$ and $\P_1(n)$ using (\ref{eq:R:p1n(rnbtn)}). Finally, eliminating $\bt_n$ using the identity $\bt_n=\P_1(n)-\P_1(n+1)$ leads to equation (\ref{eq:R:Hn(p1np1n+1)}), completing the proof.
\end{proof}
Combining Lemma~\ref{lem:R:Hn(p1np1n+1)} with the large $n$ expansion for $\textsf{p}_1(n)$, (\ref{eq:R:p1nExp}), will allow us to compute the large $n$ expansion for $D_n$.
\subsection{Proof of Theorem~\ref{thm:Large_n_Exp_Dn}}
\begin{proof}
To calculate the large $n$ expansion of $D_n(k^2)$, the idea is to re-write our Hankel determinant as
$$
D_n(0)\cdot\frac{D_n(k^2)}{D_n(0)},
$$
where $D_n(0)$ is the Hankel determinant generated
generated by a special case of the Jacobi weight,
$w^{(\al,\al)}(x)=(1-x^2)^{\al},\;\;x\in[-1,1],\;\;\al>-1$. The leading order term for the expansion of $D_n(0)$ can be found in
\cite{BasorChen2005}. However, we require higher order terms in $n$, which are quite easy to calculate.

For monic Jacobi polynomials orthogonal to the weight $w^{(\al,\al)}(x)$, it is well-known that \cite{KoekoekLeskySwart2000} (see \cite{ChenIsmail2005} for a derivation using the ladder operator approach)
\bea
h_n(0)&=&2^{2n+2\al+1}\;\frac{[\Gamma(n+\al+1)]^2\Gamma(n+2\al+1)\Gamma(n+1)}{\Gamma(2n+2\al+1)\Gamma(2n+2\al+2)}.
\eea
Hence it follows from (\ref{def:betan}) that
\bea\label{eq:R:D_n(0)_Exact}
D_n(0)
&=&
\frac{2^{n(n+2\al)}}{[G(\al+1)]^2}
\cdot\frac{G(n+1)[G(n+\al+1)]^2G(n+2\al+1)}{G(2n+2\al+1)}.
\eea
The asymptotics of the Barnes $G$-function is well understood \cite[p.~284]{Barnes1899}. For $\la\in\mathbb{C}$ such that $\vert\la\vert$ is finite, we have that
\bea\label{def:Barnes_G_asympt}
G(n+\lambda+1)&=&
(2\pi)^{(n+\lambda)/2}\;n^{{(n+\lambda)^2/2}-1/12}\;[G(1/2)]^{2/3}\;\pi^{\frac{1}{6}}\;2^{-\frac{1}{36}}
\;{\rm e}^{-\frac{3n^2}{4}-n{\lambda}}
\nonumber\\&&
\times\exp
\Bigg[
\frac{{\lambda}(2{\lambda}^2-1)}{12n}-\frac{10{\lambda}^4+10{\lambda}^2-1}{240n^2}
+\frac{{\lambda}(6{\lambda}^2-10{\lambda}^2+3)}{360n^3}
\nonumber\\&&\qquad\qquad
-\frac{42{\lambda}^6-105{\lambda}^4+63{\lambda}^2-5}{5040n^4}+{\rm O}\left(\frac{1}{n^5}\right)
\Bigg].
\eea
Applying the above formula with $\la=0$, $\la=\al$ and $\la=2\al$ for the numerator of (\ref{eq:R:D_n(0)_Exact}), and with $n\to2n$ and $\la=2\al$ for the denominator; for large $n$, we find that
\begin{small}
\bea\label{eq:R:D_n(0)_Large_n}
D_n(0)&=&D_n[w^{(\al,\al)}(\cdot)]
\nonumber\\*
&=&(2\pi)^{n+\al}\;n^{\al^2-1/4}\;2^{-n(n+2\al)}\;\frac{\pi^{\frac{1}{2}}[G(1/2)]^2}{2^{2\al^2}[G(\al+1)]^2}
\nonumber\\*&&
\times\exp
\Bigg[
\frac{2a_3(0,\al,\bt)}{n}+\frac{2a_4(0,\al,\bt)-a_2(0,\al,\bt)\Big(4a_2(0,\al,\bt)+1\Big)}{3n^2}
\nonumber\\&&\qquad\quad
+\frac{a_5(0,\al,\bt)-a_3(0,\al,\bt)\Big(4a_2(0,\al,\bt)+1\Big)}{3n^3}
+{\rm O}\left(\frac{1}{n^4}\right)
\Bigg].
%\nonumber\\
\eea
\end{small}
\!\!In the above, the coefficients $a_2(k^2,\al,\bt)$---$a_5(k^2,\al,\bt)$ are evaluated at $k^2=0$ (they can be found in Section~\ref{sec:R:2_D_btn_Ln_Exp}).

Now we proceed to calculate the large $n$ expansion of the ratio $D_n(k^2)/D_n(0)$. Using the definition of~$H_n(k^2)$, (\ref{eq:R:Hn_def}), the Hankel determinant $D_n(k^2)$ has the following integral representation:
\bea\label{eq:R:Ln:DnIntRep}
\frac{D_n(k^2)}{D_n(0)}&=&\exp\left(\int\limits_0^{k^2}\!\frac{H_n(k^2)}{k^2(k^2-1)}\,dk^2\right),
\eea
where $H_n(k^2)$ is related to $\P_1(n)$ and $\P_1(n+1)$ through equation~(\ref{eq:R:Hn(p1np1n+1)}) (Lemma~\ref{lem:R:Hn(p1np1n+1)}).

We replace $\P_1(n)$ and $\P_1(n+1)$ in (\ref{eq:R:Hn(p1np1n+1)}) by their large $n$ expansion (\ref{eq:R:p1nExp}), where we set $n=n$ and $n\to n+1$ respectively. Expanding again in the large $n$ limit, we find that $H_n(k^2)$ admits the following expansion in powers of $1/n$:
%\begin{small}
\bea
H_n(k^2)&=&
\bt\sqrt{1-k^2}(1-\sqrt{1-k^2})n-
\bt(\al+\bt)(1-\sqrt{1-k^2})+\frac{k^2\bt^2}{2}
\nonumber\\&&
+\frac{k^2\bt(4\al^2-1)}{8n\sqrt{1-k^2}}
-\frac{k^2\bt(4\al^2-1)}{8n^2\sqrt{1-k^2}}\left(\al+\bt-\frac{\bt}{\sqrt{1-k^2}}\right)
+{\rm O}\left(\frac{1}{n^3}\right).%\nonumber\\
\eea
%\end{small}
Substituting the above expansion for $H_n(k^2)$ into equation (\ref{eq:R:Ln:DnIntRep}), and integrating with respect to $k^2$, we find that the large $n$ expansion of the ratio $D_n(k^2)/D_n(0)$ is given by
\bea\label{eq:R:Dn(k2)/D_n(0)_Large_n_Proof}
\frac{D_n(k^2)}{D_n(0)}&=&
\frac{\Big(1+\sqrt{1-k^2}\Big)^{2\bt(n+\al+\bt)}}{2^{2\bt(n+\al+\bt)}\Big(1-k^2\Big)^{\bt(\al+\bt/2)}}
\exp
\Bigg[
\frac{(4\al^2-1)\bt}{4n}\bigg(1-\frac{1}{\sqrt{1-k^2}}\bigg)
\nonumber\\&&
\qquad
-\frac{(4\al^2-1)\bt}{8n^2}\bigg(2\al+\bt-\frac{2(\al+\bt)}{\sqrt{1-k^2}}+\frac{\bt}{1-k^2}\bigg)+{\rm O}\left(\frac{1}{n^3}\right)
\Bigg],\qquad
%\nonumber\\
\eea
which is exactly equation (\ref{eq:R:Dn(k2)/D_n(0)_Large_n}). 

To find the large $n$ expansion of $D_n(k^2)$, we multiply (\ref{eq:R:Dn(k2)/D_n(0)_Large_n_Proof}) by the large $n$ expansion of $D_n(0)$, given by (\ref{eq:R:D_n(0)_Large_n}), which then leads to (\ref{eq:R:Dn(k2)_Large_n}). Thus we have completed the proof to Theorem~\ref{thm:Large_n_Exp_Dn}. 
\end{proof}
\begin{remark}
At $\bt=0$, our weight (\ref{defn:R:w(xk2)}) reduces to the special case of the Jacobi weight $w^{(\al,\al)}(x)$. We can check by
substituting $\bt=0$ into (\ref{eq:R:Dn(k2)_Large_n}) that we obtain the correct large $n$ expansion for $D_n[w^{(\al,\al)}(\cdot)]$. 
\end{remark}

\section{Painlev\'e VI representation for Hankel determinant}
In this section we show by a change of variable how we can relate polynomials orthogonal with respect to (\ref{defn:R:w(xk2)}) to a set of polynomials orthogonal with respect to the following deformed shifted-Jacobi weight
 %\bea w_E(x,k^2)&=&x^{-1/2}(1-x)^\al(1-k^2x)^\bt, \qquad x\in[0,1],\quad %\quad \al>-1,\quad\bt\in\mathbb{R},\nonumber\eea
% and \bea w_O(x,k^2)&=&x^{1/2}(1-x)^\al(1-k^2x)^\bt, \qquad x\in[0,1],\quad %\al>-1,\quad\bt\in\mathbb{R},\nonumber\eea
%respectively. It can be noticed that the above two weights $w_E(x)$ and %$w_O(x)$ are deformations of the shifted Jacobi weight
\bea\label{eq:R:w2}
w_2(x,k^2,a,b,c)&=&x^{a}(1-x)^{b}(1-k^2x)^{c},\qquad x\in[0,1],\;\;a>-1,\;\;b>-1,\;\;c\in{\mathbb{R}},\;\; k^2\in(0,1).
\nonumber\\
\eea
The above weight was first studied by Magnus \cite{Magnus1995} (referred to as a generalized Jacobi weight with three factors), and more recently by Dai and Zhang \cite{DaiZhang2010}. Utilising the connection between the two weight functions, we are able to derive a correspondence between the three-term recurrence coefficients of the two sets of orthogonal polynomials. More importantly, we are able to derive a representation for our Hankel determinant (\ref{def:R:Hankel}) in terms of a Painlev\'e~VI (Theorem~\ref{thm:R:PVI_Rep}). %
\subsection{Relation to an alternate system of orthogonal polynomials}
%In this section, we characterize the Hankel determinant (\ref{def:R:Hankel}) %by deriving equations (\ref{eq:R:H2n_PVI_Rep}) and (\ref{eq:R:H2np1_PVI_Rep}), %and thus provide a proof to Theorem \ref{thm:R:PVI_Rep}.

Since the weight (\ref{defn:R:w(xk2)}) is an even function in $x$, it is possible to write (see \cite[p. 41]{NikiforovUvarov1988}) every even and odd normalization constant $h_{2n}$ and $h_{2n+1}$ as
\bea
\delta_{n,m}\;h_{2n}&=&\int\limits^1_0 Q_n(x)\;Q_m(x)\;x^{-1/2}\;(1-x)^\al{\;}(1-k^2x)^\bt {\,}dx,
\eea
and
\bea
\delta_{n,m}{\;}h_{2n+1}&=&\int\limits^1_0R_n(x){\;}R_m(x){\;}x^{1/2}{\;}(1-x)^\al{\;}(1-k^2x)^\bt {\,}dx,
\eea
respectively. In the above, we can treat $Q_n(x)$ and $R_n(x)$ as monic polynomials orthogonal with respect to the deformed shifted Jacobi weight (\ref{eq:R:w2}).
 %\bea w_E(x,k^2)&=&x^{-1/2}(1-x)^\al(1-k^2x)^\bt, \qquad x\in[0,1],\quad %\quad \al>-1,\quad\bt\in\mathbb{R},\nonumber\eea
% and \bea w_O(x,k^2)&=&x^{1/2}(1-x)^\al(1-k^2x)^\bt, \qquad x\in[0,1],\quad %\al>-1,\quad\bt\in\mathbb{R},\nonumber\eea
%respectively. It can be noticed that the above two weights $w_E(x)$ and %$w_O(x)$ are deformations of the shifted Jacobi weight
%\bea\label{eq:R:w2}
%w_2(x,k^2,a,b,c)&=&(1-k^2x)^{c} w^{(a,b)}(x)=x^{a}(1-x)^{b}(1-k^2x)^{c}.
%\eea

For even normalization constants $h_{2n}$, we consider monic polynomials $Q_n(x)$ orthogonal with respect to (\ref{eq:R:w2}), where $$a=-1/2,\qquad b=\al,\qquad c=\bt.$$
We denote the normalization constant, recurrence coefficients and sub-leading term of $Q_n(x)$ by $\widehat{h}_n$, $\widehat\al_n$, $\widehat\bt_n$  and $\widehat{\P}_1(n)$ respectively.

For odd normalization constants $h_{2n+1}$, we consider monic polynomials $R_n(x)$ orthogonal with respect to (\ref{eq:R:w2}), where
$$a=1/2,\qquad b=\al,\qquad c=\bt.$$
We denote the normalization constant, recurrence coefficients and sub-leading term of $R_n(x)$ by $\bar{h}_n$, $\bar\al_n$, $\bar\bt_n$  and $\bar{\P}_1(n)$ respectively.

The following relations then hold \cite{Chihara1978_OP}:
\begin{align}
\label{eq:AltPolyRelMinus}
h_{2n}&=\widehat{h}_n,& \bt_{2n+1}+\bt_{2n}&=\widehat\al_n,& \bt_{2n}\bt_{2n-1}&=\widehat\bt_n,& \P_1(2n)&=\widehat{\P}_1(n),
\\
\label{eq:AltPolyRelPlus}
h_{2n+1}&=\bar{h}_n,& \bt_{2n+2}+\bt_{2n+1}&=\bar\al_n,& \bt_{2n+1}\bt_{2n}&=\bar\bt_n,& \P_1(2n+1)&=\bar{\P}_1(n).
\end{align}

Through the above relations, we can calculate the asymptotic expansions of $\widehat{h}_n$, $\widehat\al_n$, $\widehat\bt_n$, $\widehat{\P}_1(n)$, $\bar{h}_n$, $\bar\al_n$, $\bar\bt_n$ and $\bar{\P}_1(n)$  for large $n$, since we have already calculated the large $n$ expansions of $\bt_n$, $\P_1(n)$, $D_n[w(\cdot,k^2)]$ and $h_n=D_{n+1}/D_n$.

\subsection{Proof of Theorem~\ref{thm:R:PVI_Rep}}
In this section, we characterize the Hankel determinant $D_n[w(\cdot,k^2)]$ for two cases: one where the matrix dimension $n$ is even, the other where it is odd.

Through the definition of the Hankel determinant $D_n$ in terms of $h_n$, (\ref{def:prodhn}), we can link the Hankel determinants generated by $w(x,k^2)$ and $w_2(x,k^2,a,b,c)$ by
\bea
\label{eq:R:D2n_Dn(w2)}
D_{2n}[w(\cdot,k^2)]=\prod\limits_{i=0}^{2n-1}h_{i}
%\nonumber\\&&
&=&\left(\prod\limits_{i=0}^{n-1}h_{2i}\right)\left(\prod\limits_{j=0}^{n-1}h_{2j+1}\right),
\nonumber\\&&
=D_n[w_2(\cdot,k^2,-1/2,\al,\bt)]{\;}D_n[w_2(\cdot,k^2,1/2,\al,\bt)],\\
\label{eq:R:D2np1_Dn(w2)}
D_{2n+1}[w(\cdot,k^2)]=\prod\limits_{i=0}^{2n}h_{i}
%\nonumber\\&&
&=&\left(\prod\limits_{i=0}^{n}h_{2i}\right)\left(\prod\limits_{j=0}^{n-1}h_{2j+1}\right),
\nonumber\\&&
=D_{n+1}[w_2(\cdot,k^2,-1/2,\al,\bt)]{\;}D_n[w_2(\cdot,k^2,1/2,\al,\bt)].
\qquad
\eea
We can use the ladder operator approach to characterize the Hankel determinant
generated by the weight $w_2(x,k^2,a,b,c)$. We define the function $\sig(k^2,n,a,b,c)$ through the Hankel determinant as
\bea\label{eq:R:sig(Dnw2)}
\sig(k^2,n,a,b,c)&=&k^2(k^2-1)\frac{d}{dk^2}\log D_n[w_2(\cdot,k^2,a,b,c)]+d_1k^2+d_0,
\eea
where
\bea
d_1&=&-nc-\frac{1}{4}(a+c)^2,\\
d_0&=&-\frac{n}{2}(n+a+b)+\frac{c}{4}(2n+a+b+c)-\frac{ab}{4}.
\eea
Based on the ladder operator approach used in \cite{DaiZhang2010}, $\sig(k^2,n,a,b,c)$
then satisfies the following Jimbo-Miwa-Okamoto $\sig$-form of Painlev\'e VI: \cite{JimboMiwa1981vII}
%\begin{small}
\bea
\sigma^\prime\Big(k^2(k^2-1)\sigma^{\prime\prime}\Big)^2+\Big\{2\sigma^\prime\big(k^2\sigma^\prime-\sigma\big)-
\big(\sigma^\prime\big)^2-\nu_1\nu_2\nu_3\nu_4\Big\}^2=\prod\limits_{i=1}^{4}\big(\nu_i^2+\sigma^\prime\big),
\eea
%\end{small}
where ${}^\prime$ denotes differentiation with respect to $k^2$, and
\bea
\nu_1=\frac{1}{2}(c-a),\qquad\nu_2=\frac{1}{2}(c+a),\qquad\nu_3=\frac{1}{2}(2n+a+c),\qquad\nu_4=\frac{1}{2}(2n+a+2b+c).
\eea
In the above, due to the symmetry of the $\sig$-form, the parameters $\nu_1$--$\nu_4$ are not unique.

Hence the logarithmic derivative of the even Hankel determinants generated by $w(x,k^2)$, (\ref{defn:R:w(xk2)}),
\bea
H_{2n}(k^2)&:=&k^2(k^2-1)\frac{d}{dk^2}\log D_{2n}[w(\cdot,k^2)],
\eea
can be written using (\ref{eq:R:D2n_Dn(w2)}) and (\ref{eq:R:sig(Dnw2)}) as the following sum:
\bea
H_{2n}(k^2)&=&
k^2(k^2-1)
\bigg[\frac{d}{dk^2}\log D_n[w_2(\cdot,k^2,-1/2,\al,\bt)]
+\frac{d}{dk^2}\log D_n[w_2(\cdot,k^2,1/2,\al,\bt)]
\bigg],\\
&=&
\sig(k^2,n,-1/2,\al,\bt)
+\sig(k^2,n,1/2,\al,\bt)
%\nonumber\\&&
+\Big(\frac{\bt^2}{2}+2n\bt+\frac{1}{8}\Big)k^2
%\nonumber\\&&
-\frac{\bt}{2}(2n+\al+\bt)+n(n+\al),
\nonumber\\
\eea
where $\sig(k^2,n,-1/2,\al,\bt)$ and $\sig(k^2,n,1/2,\al,\bt)$
have representations in terms of the Painlev\'e VI $\sigma$-form. 
Similarly 
\bea
H_{2n+1}(k^2)&:=&k^2(k^2-1)\frac{d}{dk^2}\log D_{2n+1}[w(\cdot,k^2)],
\eea
can be written using (\ref{eq:R:D2np1_Dn(w2)}) and (\ref{eq:R:sig(Dnw2)}) as (\ref{eq:R:H2np1_PVI_Rep}), completing the proof of Theorem~\ref{thm:R:PVI_Rep}.
\section*{Acknowledgements}
Y. Chen would like to thank the Macau Science and Technology Development Fund for generous support (FDCT 077/2012/A3). N. S. Haq is supported by an EPSRC grant.

\appendix
\section{The coefficients of Theorem~\ref{thm:R:2nd_Ord_D_Eqn}}\label{App:2ndOrd_DE_Coeffs}
\begin{scriptsize}
\bea
{c}_{{0,0,0}}&= &(k^2-1)^2n\left( n+2\al \right)  \left(n+2\bt\right)
\left(n+2\,\alpha+2\,\beta \right). \\%%%%% \\
{c}_{{0,1,0}}&=&
{\alpha}^2 (-3-4 {\beta}^2 +4 {\alpha}^2 ) ({k}^2 +1) ({k}^2 -1) ^2 (2 n+2 \alpha+2 \beta-3) (2 n+2 \alpha+2 \beta+3) (n+\alpha+\beta) ^2
\nonumber\\&&
-\frac{2}{9} (4\alpha^2-1)(\alpha^2-\beta^2) ({k}^2 +1) ({k}^2 -1) ^2 (2 n+2 \alpha+2 \beta-3) (2 n+2 \alpha+2 \beta+3) (2 n+2 \alpha+2 \beta-1) (2 n+2 \alpha+2 \beta+1)
\nonumber\\&&
-\frac{1}{9} (4\al^4-4 {\alpha}^2 {\beta}^2 -19\al^2-8 {\beta}^2  +18) ({k}^2 +1) ({k}^2 -1) ^2 (2 n+2 \alpha+2 \beta-1) (2 n+2 \alpha+2 \beta+1) (n+\alpha+\beta) ^2
\nonumber\\&&
  +2 ({k}^2 +1) (n+\alpha+\beta) ^2 -(\alpha^2-\beta^2)(16 n\alpha+16 \alpha \beta+1+16 n\beta+8 {n}^2 ) ({k}^2 -1) ^2
\nonumber\\&&+(4 \alpha \beta+2 {n}^2 -{\beta}^2 +5 {\alpha}^2 +4 n\alpha+4 n\beta) ({k}^2 -1) ({k}^2 +1).\\
 {c}_{{0,1,1}}&=&8k^2 \left( \alpha+\beta+n-\frac{3}{2} \right)
 \bigg[
 ({k}^2 -1) ^2 (\alpha+\beta+n) ^3 +\frac{1}{2} ({k}^4 +1) (\alpha+\beta+n) ^2
%\nonumber\\&&
-(\al^2+{\beta}^2 ) ({k}^2 -1) ^2 (\alpha+\beta+n) +\frac{1}{2}({k}^4 -1) (\alpha^2-\beta^2)
 \bigg],\\
 %%%%%
 {c}_{{1,1,0}}
&=&8k^2 \left( \alpha+\beta+n+\frac{3}{2} \right)
 \bigg[
 ({k}^2 -1) ^2 (\alpha+\beta+n) ^3 -\frac{1}{2} ({k}^4 +1) (\alpha+\beta+n) ^2
%\nonumber\\&&
-({k}^2 -1)^2(\al^2+{\beta}^2 ) (\alpha+\beta+n) -\frac{1}{2}({k}^4 -1) (\alpha^2-\beta^2)
 \bigg],\\
 %%%%%
 {c}_{{0,2,0}}&=&
-8 ({k}^4 +1) ({k}^2 -1) ^2 (n+\alpha+\beta) ^4 +24 ({k}^2 +1) ^2 ({k}^2 -1) ^2 (n+\alpha+\beta) ^4 -7 ({k}^4 +1) ({k}^2 +1) ^2 (n+\alpha+\beta) ^2 \nonumber\\&&
+({k}^4 +1) ({k}^2 -1) ^2 (n+\alpha+\beta) ^2 (8 {\al}^2+8\bt^2 +3 ) +16 ({k}^2 +1) (n+\alpha+\beta) ^2 ({k}^2 -1) ^2 ({k}^4 +1) (\alpha^2-\beta^2) \nonumber\\&&
-4 ({k}^2 +1) ^2 ({k}^2 -1) ^2 \Big(4 {k}^2 ({\alpha}^2-\bt^2) +6(\al^2+\bt^2) +1 \Big) (n+\alpha+\beta) ^2 +{\frac 9 8 } ({k}^4 +1) ({k}^2 +1) ^2
\nonumber\\&&
-6 ({k}^2 -1) ({k}^2 +1) ^2 ({k}^4 +1) (\alpha^2-\beta^2) +\frac{1}{8} ({k}^4 +1) ({k}^2 -1) ^2 (8 {\al}^2 +8 {\bt}^2 -1) ^2
%\nonumber\\&&
-2 ({k}^2 +1) ({k}^2 -1) ^2 ({k}^4 +1) (\alpha^2-\beta^2)(4 {\alpha}^2+4 {\beta}^2+1 )
\nonumber\\&&
+\frac{1}{4} ({k}^4 -1) ^2 \Big(32 {k}^2({\alpha}^2-\bt^2) -8({\alpha}^2+\bt^2) -64 {\alpha}^2 {\beta}^2 +32 {k}^2 ({\alpha}^4-\bt^4) -1 \Big).\\
{c}_{{1,1,1}}&=&-8k^4\left( \alpha+\beta+n+\frac{3}{2} \right)  \left( \alpha+\beta+n-\frac{3}{2} \right)\bigg[(k^2+1)(\al+\bt+n)^2+(k^2-1)(\al^2-\bt^2)\bigg],\\
%%%%%
{c}_{{0,3,0}}&=&-64k^2
 \bigg[
 \frac{1}{8} ({k}^2 +1) (8{k}^4-15k^2+8) (n+\alpha+\beta) ^4
%\nonumber\\&&
+{\frac 9 {32}} ({k}^2 +1) ({k}^4 +1) +{\frac 3 {64}} ({k}^2 +1) ({k}^2 -1) ^2 \Big(11 {k}^2 ({\alpha}^2 -{\beta}^2) -8 ({\alpha}^2+\bt^2) -2\Big)
\nonumber\\&&
-\frac{3}{64}(k^4+1)(k^2-1)(11k^2+8)(\al^2-\bt)^2 -{\frac {65}{64}} ({k}^4 +1) ({k}^2 +1) (n+\alpha+\beta) ^2
\nonumber\\&&
 -{\frac 1 {64}} ({k}^2 +1) ({k}^2 -1) ^2 (n+\alpha+\beta) ^2 \Big(36 {k}^2 ({\alpha}^2-\bt^2) +32 ({\alpha}^2+\bt^2) -1\Big)
\nonumber\\&&
+{\frac {17}{32}} (n+\alpha+\beta) ^2 ({k}^2 -1) ^2 ({k}^4 +1) (\alpha^2-\beta^2)  +\frac{1}{32}({k}^8 -1) (n+\alpha+\beta) ^2 (\alpha^2-\beta^2)
 \bigg],\\
 %%%%%%
 {c}_{{1,2,0}}&=&
 -32k^2\left( \alpha+\beta+n+\frac{3}{2} \right)
 \bigg[
 -({k}^2 +1) ({k}^2 -1) ^2\Big({\alpha}^2 +\frac{1}{4}\Big) (\alpha+\beta+n)  -\frac{1}{8} ({k}^4 -1) (\alpha+\beta+n) ({\alpha}^2 -{\beta}^2 +2)
 \nonumber\\&&
 +\frac{7}{8} ({k}^2 -1) ^2 (\alpha+\beta+n) (\alpha^2-\beta^2)  -\frac{1}{4} ({k}^2 +1) \Big(\alpha+\beta+n-\frac{3}{2}\Big) -\frac{1}{8}({k}^2 +1) ({k}^2 -1) ^2 (4\alpha^2-1)
\nonumber\\&&
 -\frac{1}{16}({k}^4 -1) (13 {\alpha}^2-13\bt^2-6 ) -{\frac{5}{16}}(k^2-1)^2 (\alpha^2-\beta^2)  -\frac{1}{8} ({k}^2 +1) (4{k}^4+5k^2+4)(\alpha+\beta+n)^2 \nonumber\\&&
 +\frac{1}{4}(k^2+1)(4k^4-9k^2+4)(\al+\bt+n)^3
 \bigg],\\
%%%%%
%%%%%
{c}_{{0,2,1}}&=&
 -32k^2\left( \alpha+\beta+n-\frac{3}{2} \right)
 \bigg[
 \frac{1}{4} ({k}^2 +1) (4{k}^4-9k^2 +4) (\alpha+\beta+n)^3
 +\frac{1}{8}({k}^2 +1) (4{k}^4+5k^2+4) (\alpha+\beta+n)^2
 \nonumber\\&&
  +\frac{1}{8} ({k}^4 -1) \Big({k}^2 (4{\alpha}^2-1) +5 {\alpha}^2 -9 {\beta}^2-2 \Big) -\frac{7}{4}(k^2-1) (\alpha^2-\beta^2)\left(\al+\bt+n+\frac{5}{14}\right) \nonumber\\&&
-\frac{1}{4}({k}^4 -1)  \Big({k}^2 (4{\alpha}^2+1) -7 {\alpha}^2 +3 {\beta}^2 \Big)(\alpha+\beta+n) -\frac{1}{4}({k}^2 +1) \left(\alpha+\beta+n+\frac{3}{2}\right)  \bigg].\\
{c}_{{1,2,1}}&=&32{k}^{4} \left( \alpha+\beta+n+\frac{3}{2} \right)  \left( \alpha+\beta+n-\frac{3}{2} \right)
\left[\left( \alpha+\beta+n+\frac{1}{2} \right)  \left( \al+\bt+n-\frac{1}{2}
 \right) \left(2k^4+k^2+2\right) -\frac{k^2}{2}
\right],
%\nonumber\\
%&&\times \Bigg[  \left( \alpha+\beta+n+\frac{1}{2} \right)  \left( \alpha+\bt+n-\frac{1}{2} %\right) {k}^{4}+ \frac{1}{2}\left( (\al+\bt+n)^2-\frac{3}{4}\right){k}^{2}
%\nonumber\\&&
%+ \left( \alpha+\beta+n+\frac{1}{2} \right)  \left( \alpha+\bt+n-\frac{1}{2} %\right)
%\Bigg] %%%%%
\eea\noindent
\bea
{c}_{{0,2,2}}&=&16k^4\left( \alpha+\beta+n-\frac{3}{2} \right)^{2} \Bigg[  \left( \beta+n+\frac{1}{2} \right)  \left(2\al+\bt+ n+\frac{1}{2} \right) {k}^{4}
%\nonumber\\&&
+ \left( -3(\al+\bt+n)^2+\al^2+\bt^2+\frac{1}{2}\right) {k}^{2}
+ \left( \alpha+n+\frac{1}{2} \right)  \left( \alpha+2\bt+n+\frac{1}{2} \right)  \Bigg],\qquad\\
 %%%%%
 {c}_{{2,2,0}}&=&
16k^4\left( \alpha+\beta+n+\frac{3}{2} \right)^{2} \Bigg[  \left( \beta+n-\frac{1}{2} \right)  \left(2\al+\bt+ n-\frac{1}{2} \right) {k}^{4}
%\nonumber\\&&
+ \left( -3(\al+\bt+n)^2+\al^2+\bt^2+\frac{1}{2}\right) {k}^{2}
+ \left( \alpha+n-\frac{1}{2} \right)  \left( \alpha+2\bt+n-\frac{1}{2} \right)  \Bigg],\\
%%%%%%%%%%%%%%
{c}_{{0,4,0}}&=&
k^4
\bigg[
32(3k^4-2k^2+3)(n+\al+\bt)^4
-8\Big((4\al^2+22)k^4+(21-4\al^2-4\bt^2)k^2+4\bt^2+22\Big)(n+\al+\bt)^2
\nonumber\\&&
 +\frac{9}{2} ({k}^4 +1) ({k}^2 -1) (8 {\al}^2 +8 {\bt}^2-1 ) +{\frac {99}2 } ({k}^2 +1) ({k}^4 +1)
-9 ({k}^4 -1) \Big(4 {k}^2\Big(\al^2 +{\beta}^2+\frac{5}{4}\Big)  +4(\al^2- {\beta}^2)  \Big)\bigg],
\nonumber\\
%%%%%%%%%
\\
{c}_{{1,3,0}}&=&32k^4\left( \alpha+\beta+n+\frac{3}{2} \right)
 \bigg[
 ({k}^4 -6k^2+1) (\alpha+\beta+n) ^3  -{\frac 9 8 } ({k}^4 +1) \left(\alpha+\beta+n-\frac{13}{6}\right) \nonumber\\&&
+\frac{1}{8}({k}^2 -1) (\alpha+\beta+n) (8 {k}^2 {\alpha}^2 -8\bt^2-{k}^2  +1) -\frac{1}{16} ({k}^2 -1) \Big(40 {k}^2 {\alpha}^2 -40\bt^2+17 ({k}^2-1)\Big) %\nonumber\\&&
-\frac{1}{2} (7{k}^4+12k^2 +7) (\alpha+\beta+n) ^2
 \bigg],\\
 %%%%%%%
{c}_{{0,3,1}}&=&
 32k^4 \left( \alpha+\beta+n-\frac{3}{2} \right)
\bigg[
-{\frac{9}{8} } ({k}^2 +1) \left(n+\alpha+\beta+\frac{13}{6}\right) -\frac{1}{2} ({k}^4 -1) (n+\alpha+\beta) \left({\beta}^2 -{\alpha}^2 +{\frac{9}{8} }\right)
\nonumber\\&&
+\frac{1}{16}({k}^2 -1) ^2 (n+\alpha+\beta) (8\al^2+8 {\beta}^2-11 ) + ({k}^4-6k^2 +1) (n+\alpha+\beta) ^3
\nonumber\\&&
+\frac{1}{16}(k^2-1)(40k^2\al^2-22k^2-40\bt^2-17)
 +\frac{1}{2} (7k^4+12{k}^2 +7) (n+\alpha+\beta) ^2
\bigg].\\
{c}_{{1,3,1}}&=&-128k^6 \left(k^2+ 1\right)  \left( \alpha+\beta+n+\frac{3}{2} \right)\left( \alpha+\beta+n-\frac{3}{2} \right)
 \left( (\al+\bt+n)^2-\frac{3}{4}\right),
\\ %%%%%
{c}_{{1,2,2}}&=&-2{k}^{6}(k^2+1) \left( 2\,\alpha+2\,\beta+2\,n-3 \right) ^{2} \left( 2\,\alpha+2\,\beta+2\,n+1 \right)
\left( 2\,\alpha+2\,\beta+2\,n+3 \right),\\
 %%%%%
{c}_{{2,2,1}}&=&-2{k}^{6}(k^2+1) \left( 2\,\alpha+2\,\beta+2\,n-3 \right)  \left( 2\,\alpha+2\,\beta+2\,n+3 \right) ^{2}
 \left( 2\,\alpha+2\bt+2\,n-1\right),\\
 %%%%%
{c}_{{0,4,1}}&=&
 32k^6(k^2+1)\left( \alpha+\beta+n-\frac{3}{2} \right)
 \left[\left( \alpha+\beta+n\right)^2\left( \alpha+\beta+n-\frac{19}{2} \right)
 +\frac{11}{4}\left( \alpha+\beta+n\right)+\frac{39}{8}
\right],\\
 %%%%%
{c}_{{1,4,0}}&=&32k^6(k^2+1)\left( \alpha+\beta+n+\frac{3}{2} \right)
 \left[\left( \alpha+\beta+n\right)^2\left( \alpha+\beta+n+\frac{19}{2} \right)
 +\frac{11}{4}\left( \alpha+\beta+n\right)-\frac{39}{8}
\right],\\
%%%%%
{c}_{{0,3,2}}&=&2{k}^{6}(k^2+1) \left( 2\,\alpha+2\,\beta+2\,n-3 \right) ^{2} \left( 2\,\alpha+2\,\beta+2\,n+1 \right) \left( 6\alpha+6\bt+6n-7\right),\\
%%%%%
{c}_{{2,3,0}}&=&2\,{k}^{6}(k^2+1) \left( 2\,\alpha+2\,\beta+2\,n+3 \right) ^{2} \left( 6\,\alpha+6\,\beta+6\,n+7 \right)
 \left( 2\,\alpha+2\bt+2\,n-1 \right),\\
%%%%%
{c}_{{0,5,0}}&=&-64k^6(k^2+1) \left( \alpha+\beta+n+\frac{3}{2} \right)\left( \alpha+\beta+n-\frac{3}{2} \right)\bigg[\left( \alpha+\beta+n+\frac{1}{2} \right)\left( \alpha+\beta+n-\frac{1}{2} \right)-\frac{1}{2}\bigg].\\
%%%%%
{c}_{{2,4,0}}&=&-112k^8\left( \alpha+\beta+n+\frac{3}{2} \right)^2
 \left[\left( \alpha+\beta+n\right)^2
 +\frac{11}{7}\left( \alpha+\beta+n\right)-\frac{33}{28}
\right],\\
%%%%%
{c}_{{0,4,2}}&=&-112k^8\left( \alpha+\beta+n-\frac{3}{2} \right)^{2}
\left[ (n+\al+\bt)\left(n+\al+\bt-\frac{11}{7}\right)-\frac{33}{28}\right],\\ %%%%%
{c}_{{3,3,0}}&=&-4{k}^{8} \left( 2\,\alpha+2\,n-1+2\,\beta \right)  \left( 2\,\alpha+2\,\beta+2\,n+3 \right) ^{3},\\
%%%%%
{c}_{{0,3,3}}&=&-4\,{k}^{8} \left( 2\,\alpha+2\,\beta+2\,n-3 \right) ^{3} \left( 2\,\alpha+2\,\beta+2\,n+1 \right),\\
%%%%%
{c}_{{1,4,1}}&=&320k^8 \left( \alpha+\beta+n+\frac{3}{2} \right)\left( \alpha+\beta+n-\frac{3}{2} \right)\left( (\al+\bt+n)^2-\frac{13}{20}\right),\\
%%%%%
{c}_{{1,3,2}}&=&2\,{k}^{8} \left( 2\,\alpha+2\,\beta+2\,n-3 \right) ^{2} \left( 2\,\alpha+2\,\beta+2\,n+3 \right) ^{2},\\
%%%%%
{c}_{{2,3,1}}&=&2\,{k}^{8} \left( 2\,\alpha+2\,\beta+2\,n-3 \right) ^{2} \left( 2\,\alpha+2\,\beta+2\,n+3 \right) ^{2},\\
%%%%%
{c}_{{2,2,2}}&=&{k}^{8} \left( 2\,\alpha+2\,\beta+2\,n-3 \right) ^{2} \left( 2\,\alpha+2\,\beta+2\,n+3 \right) ^{2},\\
%%%%%
{c}_{{1,5,0}}&=&
-32k^8\left( \alpha+\beta+n+\frac{3}{2} \right) ^{2}
\left[ (n+\al+\bt)\left(n+\al+\bt+5\right)-\frac{15}{4}\right],\\
%%%%%%
{c}_{{0,5,1}}&=&
 -32k^8\left( \alpha+\beta+n-\frac{3}{2} \right) ^{2}
\left[ (n+\al+\bt)\left(n+\al+\bt-5\right)-\frac{15}{4}\right],\\
%%%%%
{c}_{{0,6,0}}&=&{k}^{8} \left( 2\,\alpha+2\,\beta+2\,n-3 \right) ^{2} \left( 2\,\alpha+2\,\beta+2\,n+3 \right) ^{2}.
%%%%%
\eea
\end{scriptsize}
%\end{footnotesize}

% BibTeX

\bibliographystyle{model1b-num-names}
\bibliography{Reference_db}

% Non-BibTeX users please use

\end{document}